\newcommand{\Z}{\ensuremath{\mathbb{Z}}}
\newcommand{\R}{\ensuremath{\mathbb{R}}}
\newcommand{\C}{\ensuremath{\mathbb{C}}}
\newcommand{\Aut}{\ensuremath{\mathrm{Aut}}}
\newcommand{\Tr}{\ensuremath{\mathrm{tr}\,}}
\newcommand{\Hy}{\ensuremath{\mathbb{H}}}  
\newcommand{\dd}{\ensuremath{\,\mathrm{d}}}
\newcommand{\angles}[1]{\ensuremath{\langle #1 \rangle}}
\newcommand{\sgn}{\ensuremath{\mathrm{sgn}}}
\newcommand{\Stab}{\ensuremath{\mathrm{Stab}}}
\newcommand{\extotimes}{\ensuremath{\boxtimes}} 
\newcommand{\identity}{\ensuremath{\mathrm{id}}}
\newcommand{\Hom}{\ensuremath{\mathrm{Hom}}}
\newcommand{\Isom}{\ensuremath{\mathrm{Isom}}}
\newcommand{\End}{\ensuremath{\mathrm{End}}}
\newcommand{\rightiso}{\ensuremath{\stackrel{\sim}{\rightarrow}}}
\newcommand{\Ker}{\ensuremath{\mathrm{Ker}\,}}
\newcommand{\Ad}{\ensuremath{\mathrm{Ad}\,}}
\newcommand{\Gm}{\ensuremath{\mathbb{G}_\mathrm{m}}}
\newcommand{\Ga}{\ensuremath{\mathbb{G}_\mathrm{a}}}
\newcommand{\Res}{\ensuremath{\mathrm{Res}}}
\newcommand{\GL}{\ensuremath{\mathrm{GL}}}
\newcommand{\tGL}{\ensuremath{\widetilde{\mathrm{GL}}}}  
\newcommand{\SO}{\ensuremath{\mathrm{SO}}}
\newcommand{\Or}{\ensuremath{\mathrm{O}}}
\newcommand{\U}{\ensuremath{\mathrm{U}}}
\newcommand{\SU}{\ensuremath{\mathrm{SU}}}
\newcommand{\PGL}{\ensuremath{\mathrm{PGL}}}
\newcommand{\gl}{\ensuremath{\mathfrak{gl}}}
\newcommand{\so}{\ensuremath{\mathfrak{so}}}
\newcommand{\Sp}{\ensuremath{\mathrm{Sp}}}
\newcommand{\Ind}{\ensuremath{\mathrm{Ind}}}
\newcommand{\Out}{\ensuremath{\mathrm{Out}}}
\newcommand{\Lgrp}[1]{\ensuremath{{}^{\mathrm{L}} #1}}
\newcommand{\WD}{\ensuremath{\mathrm{WD}}} 
\theoremstyle{plain}
\newtheorem{proposition}{Proposition}[subsection]
\newtheorem{lemma}[proposition]{Lemma}
\newtheorem{theorem}[proposition]{Theorem}
\newtheorem{corollary}[proposition]{Corollary}
\theoremstyle{definition}
\newtheorem{definition}[proposition]{Definition}
\newtheorem{definition-theorem}[proposition]{Definition-Theorem}
\newtheorem{definition-proposition}[proposition]{Definition-Proposition}
\newtheorem{hypothesis}[proposition]{Hypothesis}
\newtheorem{remark}[proposition]{Remark}
\newcommand{\GSnorm}{\ensuremath{\stackrel{\mathrm{GS}}{\longrightarrow}}} 
\title{On a pairing of Goldberg-Shahidi for even orthogonal groups}
\author{Wen-Wei Li}
\date{}
\begin{document}

\maketitle

\begin{abstract}
  Let $\pi \extotimes \sigma$ be a supercuspidal representation of $\GL(2n) \times \SO(2n)$ over a $p$-adic field with $\pi$ selfdual, where $\SO(2n)$ stands for a quasisplit even special orthogonal group. In order to study its normalized parabolic induction to $\SO(6n)$, Goldberg and Shahidi defined a pairing $R$ between the matrix coefficients of $\pi$ and $\sigma$ which controls the residue of the standard intertwining operator. The elliptic part $R_\text{ell}$ of $R$ is conjectured to be related to twisted endoscopic transfer. Based on Arthur's endoscopic classification and Spallone's improvement of Goldberg-Shahidi program, we will verify some of their predictions for general $n$, under the assumption that $\pi$ does not come from $\SO(2n+1)$.
\end{abstract}


\tableofcontents

\section{Introduction}\label{sec:intro}
\paragraph{History}
The residue of intertwining operators plays a pivotal role in the study of non-discrete tempered spectrum of reductive groups over local fields. To be precise, let $F$ be a non-archimedean local field of characteristic zero, and consider a quasisplit classical group $G_1$ together with a maximal proper Levi subgroup of the form $M := \GL(H) \times G$, where $H$ is some $F$-vector space and $G$ is a classical group of the same type as $G_1$. Choose a parabolic subgroup $P=MU$. Let $\mathcal{I}_P(\pi \extotimes \sigma)$ be the normalized parabolic induction, where $\pi \extotimes \sigma$ is an essentially square-integrable irreducible representation of $M(F)$. To study its reducibility, one may assume $\pi$ to be selfdual. After Harish-Chandra, the problem is reduced to the study of the residue at $\lambda=0$ of the standard intertwining operator $J_P(w_0, (\pi \extotimes \sigma)_\lambda)$ for $\mathcal{I}_P((\pi \extotimes \sigma)_\lambda)$, where
$\pi \extotimes \sigma \mapsto (\pi \extotimes \sigma)_\lambda$ means the twist by the unramified character $M(F)$ attached to $\lambda \in \mathfrak{a}_{M,\C}^*$, and $w_0$ is a suitable element in $N_{G_1(F)}(M(F))$.

In the case of split even orthogonal groups and supercuspidal $\pi \extotimes \sigma$, Shahidi introduced a notion of norm correspondence in \cite{Sh95} that relates conjugacy classes between $\GL(H)$ and $G$ (baptized GS-norm in this article). Using a lemma of Rallis \cite[Lemma 4.1]{Sh92}, he showed that $\Res_{\lambda=0} J_P(w_0, (\pi \extotimes \sigma)_\lambda)$ is actually governed by a pairing $R$ between matrix coefficients of $\pi$ and $\sigma$. The main idea is to decompose $U$ into $M$-orbits. This method is polished and generalized to other classical groups in a series of papers by Goldberg and Shahidi \cite{GS98,GS01,GS09}. Later on, Spallone rewrote $R$ as a ``weighted integral'' of matrix coefficients, and gave an amended formula in \cite{Sp08,Sp11}. Recently, a broader interpretation in the context of generalized functionals and Bessel functions is given in \cite[\S 6.3]{CS11}.

In either formulation, $R$ can be written as the sum of a regular, or elliptic term $R_\text{ell}$ and a singular term $R_\text{sing}$. The term $R_\text{ell}$ can be expressed as an integral pairing between the character of $\sigma$ and some twisted character of $\pi$. Shahidi conjectured that the nonvanishing of $R_\text{ell}$ should be closely related to twisted endoscopy for $\GL(H)$, as developed in \cite{KS}.

The reducibility of $\mathcal{I}_P(\pi \extotimes \sigma)$ can be determined by Arthur's results \cite[\S 6.6]{ArEndo} nowadays. Nevertheless, some conjectures about $R_\text{ell}$ remain unanswered. Let us explain.

We will concentrate on the case $G_1 = \SO(V_1,q_1)$, $G=\SO(V,q)$ where $(V_1,q_1)$ and $(V,q)$ are even-dimensional $F$-quadratic spaces, such that $\dim_F V = \dim_F H = 2n$, i.e.\ $M$ consists of matrices of ``three equal-sized blocks''. We exclude the case $\SO(V,q) \simeq \GL(1)$. The pairing $R_\text{ell}$ essentially takes the form
$$ \sum_{\substack{T: \text{elliptic} \\ /\text{conjugation}}} |W(\SO(V,q), T(F))|^{-1} \int_{T(F)} I^{\SO(V,q)}(\sigma,\gamma) I^{\tGL(H)}(\tilde{\pi}, \delta) \dd\gamma, $$
where
\begin{itemize}
  \item $I^{\SO(V,q)}(\sigma,\gamma)$ is the normalized character of $\sigma$;
  \item $I^{\tGL(H)}(\tilde{\pi}, \delta)$ is the normalized twisted character of $\pi$;
  \item $\gamma \mapsto \delta$ is a section for GS-norm.
\end{itemize}
For further explanations, see \S\ref{sec:pairing}.

This looks like the elliptic inner product for square-integrable representations \cite[Theorem 3]{Cl91}, but the groups here are different, and a twist intervenes. Shahidi made the following definition (sic): a supercuspidal selfdual representation $\pi$ of $\GL(H)$ is called a twisted endoscopic transfer of a supercuspidal representation $\sigma$ of $\SO(V,q)$, if the pairing $R_\text{ell}$ for $\pi \extotimes \sigma$ is not identically zero. This is \cite[Definition 5.1]{GS98}.

Indeed, Goldberg and Shahidi have shown that the GS-norm is compatible with the norm mapping in twisted endoscopy, defined by Kottwitz and Shelstad in \cite[\S 3]{KS}. On the other hand, the Langlands-Shahidi method via $L$-functions also gives some evidence for this definition when $\sigma$ is generic; see \cite[\S 5]{GS98}. The connection remains conjectural, however, until the emergence of \cite{SS10}. Let $\omega_\pi$ denote the central character of $\pi$. In the case $n=1$ and $\SO(V,q) = E^1 := \{x \in E^\times : N_{E/F}(x)=1\}$ where $E/F$ is a quadratic extension, Shahidi and Spallone showed that
\begin{itemize}
  \item if $\omega_\pi = 1$, then $R_\text{ell}$ is not identically zero for all $\pi$ and $\sigma$;
  \item if $\omega_\pi \neq 1$, then $R_\text{ell}$ is not identically zero if and only if $\pi$ is attached to $\Ind_{E/F}(\sigma)$ (the local automorphic induction), where $\sigma$ is regarded as a representation of $E^\times$ via the usual surjection $E^\times \to E^1$.
\end{itemize}
The proof of the first assertion is a direct calculation using Shimizu's explicit character formulae, while that of the second assertion relies upon the character identities of Labesse-Langlands, which amounts to the twisted endoscopy for $\GL(2)$ (cf.\ \cite[\S 5.3]{KS}). One of their key observations is that the Labesse-Langlands transfer factor $\Delta(\gamma,\delta)$ is independent of $\delta$, provided that $\gamma$ and $\delta$ are related by GS-norm. Also note that $\omega_\pi = 1$ if and only if $\pi$ comes from $\PGL(2)=\SO(3)$.

\paragraph{Goal of this article}
We consider similar problems for general $\GL(H)$ and $\SO(V,q)$ with $\dim_F H = \dim_F V = 2n$, under the Hypothesis \ref{hyp:even-SO} that $\pi$ does not come from $\SO(2n+1)$ (the split form) by twisted endoscopic transfer. The main impetus comes from Arthur's monumental work \cite{ArEndo}, which provides the necessary local Langlands correspondence and twisted character relations. Our main result is Theorem \ref{prop:main} that reconciles Shahidi's definition and Arthur's endoscopic classification for $\SO(V,q)$.

The implications in number theory and harmonic analysis can be understood in terms of $L$-functions:
$$\xymatrix{
  *+[F]\txt{nonvanishing of \\ $R=R_\text{ell} + R_\text{sing}$} & *+[F]\txt{pole at $\lambda=0$ of \\ $J_P(w_0, (\pi \extotimes \sigma)_\lambda)$} \ar@{<->}[l] \ar@{<->}[r] & *+[F]\txt{pole at $s=0$ of \\ $L(s, \pi \times \sigma) L(2s, \pi, \wedge^2)$}
} \; ,$$
the rightmost arrow comes from Langlands' conjecture on the normalization of intertwining operators, which is also a good vehicle for studying such $L$-functions as illustrated in \cite[Chapter 8]{Sh10} for generic inducing data. In view of our result, the nonvanishing of $R_\text{ell}$ should point to a pole of the Rankin-Selberg $L$-function $L(s, \pi \times \sigma)$ at $s=0$, provided that the exterior square $L$-function $L(s, \pi, \wedge^2)$ is holomorphic at $s=0$. Cf. \cite{GS98}.

The idea of the proof is to apply the endoscopic character relations, then use Schur's orthogonality relations on $\SO(V,q)$. This does not follow from \cite{ArEndo} for free since the twisted transfer factor $\Delta(\gamma,\delta)$ intervenes. Let us explain the bottlenecks.
\begin{enumerate}
  \item Instead of the language of \cite{KS}, we adopt Labesse's notion of twisted spaces \cite{Lab04} systematically. That is, we will do the harmonic analysis on the $\GL(H)$-bitorsor of non-degenerate bilinear forms on $H$, or equivalently the space $\Isom(H,H^\vee)$, denoted by $\tGL(H)$.
  \item Following \cite{Sp11}, the Goldberg-Shahidi-Spallone formalism is reformulated in a basis-free way. The twisted space $\tGL(H)$ comes out naturally from this perspective. One also obtains a transparent description of GS-norms for very regular classes.
  \item Unlike some other applications of endoscopy, knowing the formal properties of transfer factors does not suffice. The key ingredient comes from Waldspurger's elegant formula \cite{Wa10} of the transfer factor $\Delta$ for $\SO(V,q)$, viewed as an elliptic endoscopic group of $\tGL(H)$. Once the twisted paraphrase of Goldberg-Shahidi-Spallone formalism is in place, it will follow easily that  $\Delta$ only depends on $(V,q)$.
\end{enumerate}
The applicability of Waldspurger's formula is inextricably liked with the first two points. As a consequence, our notations will deviate somehow from those of \cite{GS98,GS01,GS09,SS10,Sp11}. We hope the reader will be convinced of the flexibility of twisted spaces, especially in the context of classical groups of higher rank.

In this article, we restrict ourselves to the elliptic terms in Goldberg-Shahidi or Spallone's formula for $R$. This is certainly unsatisfactory. If the non-elliptic terms in $R$ can be expressed in terms of Arthur's weighted orbital integrals, as alluded in \cite{SS10}, then the endoscopic character relations obtained in \cite{ArEndo} might still be applicable by invoking \cite{Ar87}.

\paragraph{Structure of this article}
In \S\ref{sec:notations}, we set up the basic terminologies, including the conventions of quadratic and hermitian forms that will be heavily used in \S\ref{sec:GSS-formalism}.

Due to the temporary lack of a comprehensive exposition on Labesse's notion of twisted spaces, we collect some basic notions and results in \S\ref{sec:twisted}. Although we will only encounter the twisted space of bilinear or sesquilinear forms, a general introduction seems profitable. The main sources are \cite{Wa09-ep,L10}.

In \S\ref{sec:twisted-endoscopy}, we review the geometric aspect of twisted endoscopy for $\tGL(2n)$ in the spirit of \cite{ArEndo}. The emphasis is put on the simple endoscopic groups $\SO(V,q)$ where $\dim V = 2n$. Nevertheless, it seems more reasonable, and not too laborious, to include the other elliptic endoscopic data as well. In order to use Waldpsurger's formulae \cite{Wa10}, we also need to parametrize conjugacy classes in terms of linear algebra and calculate some invariants attached to regular nilpotent orbits. After these apéritifs, we are able to state the formula for geometric transfer factors in \S\ref{sec:Delta-formula1}. We will also prove the Lemma \ref{prop:separation} that supersedes \cite[Proposition 8]{SS10}.

In \S\ref{sec:crude-LLC} we review the spectral aspect of twisted endoscopy. After a recollection of Arthur's central results, we give a formula of character values at elliptic classes. The idea of using Weyl integration formula to relate character values is certainly well-known, however the twisted case has not been worked out in detail. We will give a proof in the spirit of \cite{Ar96}. Note that we only need the crude version of local Langlands correspondence for square-integrable representations of $\SO(V,q)$, that describes the representations only up to $\Or(V,q)$. Indeed, the Goldberg-Shahidi pairing is equally ``crude'' in this respect.

The formalism of Goldberg-Shahidi-Spallone is reformulated in \S\ref{sec:GSS-formalism} for classical groups in general. Using the parametrization of very regular classes, we are able to relate GS-norms and the correspondence in \cite{Wa10} directly. As an easy corollary, we prove the constancy of transfer factors for elements $\gamma$, $\delta$ that are related via GS-norm, in the even orthogonal case. We give a rapid review of the elliptic term of Goldberg-Shahidi pairing $R$ in \S\ref{sec:pairing}. What we consider is actually the pairing $R^\text{ell}$ defined in \eqref{eqn:Rell} that is proportional to the $R_\text{ell}$ in Spallone's formula. Using the aforementioned results, Theorem \ref{prop:main} follows immediately. See Remark \ref{rem:nonqs} for the non-quasisplit cases.

Some statements in this article are evidently more general then needed. This is done intentionally, in the hope that they might be useful for further generalizations.

\paragraph{Acknowledgements}
The author is grateful to Li Cai, Kuok Fai Chao and Bin Xu for suggesting this topic. He would also like to thank Jean-Loup Waldspurger for discussions on transfer factors, as well as Wee Teck Gan, Freydoon Shahidi and Steven Spallone for their useful remarks. He acknowledges the referee for valuable comments.

\section{Notations and conventions}\label{sec:notations}
\paragraph{Local fields}
Unless otherwise specified, $F$ denotes a non-archimedean local field of characteristic zero with a fixed uniformizer $\varpi_F$. Let $\mathfrak{o}_F$, $\mathfrak{p}_F$ be the ring of integers of $F$ and its maximal ideal, respectively, and let $q_F$ be the cardinality of $\mathfrak{o}_F/\mathfrak{p}_F$. Denote by $|\cdot|$ the normalized absolute value on $F$. Fix an algebraic closure $\bar{F}$ of $F$. We will denote by $\Gamma_F$ its absolute Galois group, $\mathrm{W}_F$ its Weil group, and $\WD_F := \mathrm{W}_F \times \SU(2)$ its Weil-Deligne group.

Throughout this article, we fix an additive unitary character $\psi_F: F \to \C^\times$.

Let $L/F$ be a finite extension. We will write $N_{L/F}$ and $\Tr_{L/F}$ for the norm and trace, respectively. More generally, if $L$ is an étale $F$-algebra of finite dimension, we can still define $N_{L/F}$ and $\Tr_{L/F}$.

\paragraph{Group theory}
By $F$-group we mean a group variety over $F$. For any $F$-group $H$, denote by $H^\circ$ its identity connected component, and $H(F)$ denotes the set of $F$-points of $H$, equipped with the topology induced by $F$. By a subgroup of $H$ we mean a closed $F$-subgroup, unless otherwise specified. The normalizers in $H$ (resp. centralizers) will be denoted by $N_H(\cdot)$ (resp. $Z_H(\cdot)$) and the center of $H$ will be denoted by $Z_H$. The adjoint action by an element $x$ is denoted by $\Ad_x(\cdot)$. The same notations pertain to topological groups. Fraktur letters are used to denote Lie algebras.

Let $(\pi, V)$ be a smooth representation of a locally profinite group, its contragredient representation is denoted by $(\pi^\vee, V^\vee)$.

\paragraph{Hermitian spaces}
Consider a pair $(E,\tau)$, where $E$ is a field of characteristic $\neq 2$ and $\tau$ is an involution of $E$. For a finite-dimensional $E$-vector space $V$, define its hermitian dual as the $E$-vector space
$$ V^\vee := {}^\tau \Hom_E(V,E), $$
where the superscript ${}^\tau$ means that its scalar multiplication $\star$ is twisted: $\alpha \star \lambda := \tau(\alpha)\lambda$ for all $\alpha \in E$, $\lambda \in V^\vee$. We have the canonical isomorphism $V \rightiso V^{\vee\vee}$, namely $v \mapsto \tau(\angles{\cdot, v})$. We shall always identify $V$ and $V^{\vee\vee}$ without further remarks.

For $X \in \Hom_E(V_1,V_2)$, its hermitian transpose is denoted by $\check{X} \in \Hom_E(V_2^\vee, V_1^\vee)$, characterized by $\angles{\check{v}_2, Xv_1} = \angles{\check{X}\check{v}_2, v_1}$.

By a $(E,\tau)$-sesquilinear form on $V$, we mean a bi-additive map $q: V \times V \to E$ such that $q(\alpha v|\beta v') = \tau(\alpha)q(v|v')\beta$ for all $\alpha,\beta \in E$, $v,v' \in V$. The $(E,\tau)$-sesquilinear forms on $V$ are in bijection with elements in $\Isom_E(V,V^\vee)$: to $\sigma: V \rightiso V^\vee$ we attach the sesquilinear form $q(v|v') = \angles{\sigma(v),v'}$.

Let $q$ be a sesquilinear form on $V$, its transposed form ${}^t q$ is defined by $(v,v') \mapsto \tau(q(v'|v))$.

Let $\epsilon=\pm 1$, a $(E,\tau)$-hermitian form of sign $\epsilon$ on $V$ is a non-degenerate $(E,\tau)$-sesquilinear form $q$ such that ${}^t q =\epsilon q$. We also call $(V,q)$ a $(E,\tau)$-hermitian space of sign $\epsilon$. A subspace $H \subset V$ is called totally isotropic if $q|_{H \times H}=0$.  Define $\U(V,q) := \Stab_{\GL_E(V)}(q)$.

Only two cases are encountered in this article: $E=F$, $\tau=\identity$ or $E/F$ is a quadratic extension, $\text{Gal}(E/F)=\{1,\tau\}$. In the first case, we get $F$-quadratic (resp. symplectic) spaces when $\epsilon=1$ (resp. $\epsilon=-1$); in the latter case, we will omit $\tau$ and speak of $E/F$-hermitian spaces, $E/F$-sesquilinear forms, etc. The main concern, however, will be the case of quadratic spaces.

\paragraph{Quadratic spaces}
As above, an $F$-quadratic space is a pair $(V,q)$. We set $q(v) := q(v|v)$. These two points of view on $q$, namely as a bilinear function on $V \times V$ or a quadratic function on $V$, will be used interchangeably.

An element in $a \in F^\times$ is said to be represented by $q$ if $a = q(v|v)$ for some $v \in V$. When there is no worry of confusions, we will drop $V$ and simply talk about the quadratic form $q$. The determinant of $q$ is $\det q := \det(q(e_i|e_j)_{i,j}) \in F^\times/F^{\times 2}$ where $\{e_1,\ldots, e_n\}$ is a basis of $V$; the discriminant of $q$ is $d_\pm(q) := (-1)^{\frac{n(n-1)}{2}} \det q$. Note that $d_\pm$ factors through the Witt group of $F$.

Up to isomorphism, there is a unique isotropic quadratic space (ie.\ $\exists v \neq 0$, $q(v)=0$) of dimension $2$; denote it by $\Hy$.

The orthogonal group (resp. special orthogonal group) is denoted by $\Or(V,q)$ (resp. $\SO(V,q)$).

For $a_1, \ldots, a_n \in F^\times$, write $\angles{a_1, \ldots, a_n}$ for the quadratic form on $F^n$ given by $(x_1, \ldots, x_n) \mapsto a_1 x_1^2 + \cdots a_n x_n^2$. For $c \in F^\times$, write $cq$ for the scaled quadratic form $(v,v') \mapsto cq(v|v')$. These notions can be generalized to the case where $F$ is replaced by an étale $F$-algebra.

We will make use of the Weil index $\gamma_{\psi_F}$ defined in \cite[\S 14]{Weil64}. Let $(V,q)$ be an $F$-quadratic space. The constant $\gamma_{\psi_F}$ is characterized by the identity
$$ \iint_{V \times V} \phi(x-y) \psi_F(q(v)) \dd x \dd y = \gamma_{\psi_F}(q) \int_V \phi(x) \dd x $$
for all Schwartz-Bruhat function $\phi$ on $V$, where we use the selfdual measure on $V$ with respect to the bi-character $\psi_F \circ 2q(\cdot|\cdot): V \times V \to \C^\times$. The map $\gamma_{\psi_F}$ induces a character of the Witt group of $F$; it only depends on the character of second degree $\psi_F \circ q: V \to \C^\times$.

The Hasse invariant $s(q) \in \{\pm 1\}$ of an $F$-quadratic form $(V,q)$ is defined as follows. Choose a diagonalization $(V,q) \simeq \angles{a_1, \ldots, a_m}$ and set
$$ s(q) := \prod_{1 \leq i < j \leq m} (a_i,a_j)_F $$
where $(\cdot,\cdot)_F$ is the quadratic Hilbert symbol of $F$.

\section{Twisted harmonic analysis}\label{sec:twisted}
\subsection{General notions}
We will systematically use the notion of twisted spaces introduced by Labesse \cite{Lab04}. In this section, we collect some basic notions that will be needed later. Our main references are \cite{L10} and \cite[\S 1]{Wa09-ep}.

\paragraph{Twisted spaces}
A twisted space is a pair $(G,\tilde{G})$, where $G$ is an $F$-group and $\tilde{G}$ is a $G$-bitorsor, that is,
\begin{itemize}
  \item $G$ acts on $\tilde{G}$ on the left and right, written multiplicatively as $(x,\delta) \mapsto x\delta$ and $(\delta,x) \mapsto \delta x$, respectively, where $x \in G$ and $\delta \in \tilde{G}$;
  \item the two actions commute: $(x\delta)y = x(\delta y)$ for all $x,y \in G$, $\delta \in \tilde{G}$;
  \item $\tilde{G}$ is a $G$-torsor under the action on either side.
\end{itemize}
One may also talk about the twisted subspaces $(H,\tilde{H}) \subset (G,\tilde{G})$, where $H$ is a subgroup of $G$ and $\tilde{H} \subset \tilde{G}$ forms a $H$-bitorsor.

There is an obvious notion of isomorphisms between twisted spaces. A twisted space $(G, \tilde{G})$ is called untwisted if it is isomorphic to $\tilde{G} = G$ on which $G$ acts by left and right multiplications. The group $G$ will often be omitted from the notations. As the definitions are purely categorical, one can also introduce twisted spaces in the category of locally compact spaces and define the functor $(G,\tilde{G}) \mapsto (G(F),\tilde{G}(F))$.

Let $\delta \in \tilde{G}$. Since $\tilde{G}$ is a bitorsor, there is an unique automorphism $\Ad_\delta: G \to G$ such that
$$ \delta x = \Ad_\delta(x) \delta, \quad x \in G .$$

One verifies that $\Ad_{x\delta y}=\Ad_x \Ad_\delta \Ad_y$ for all $x,y \in G$. The image of $\Ad_\delta$ in the outer automorphism group of $G$ is thus independent of $\delta$; denote it by $\theta := \theta_{\tilde{G}}$. The action of $\theta$ on $Z_G$ is well-defined. Thus we can set
$$ Z_{\tilde{G}} := Z_G^\theta = \{x \in G : x\delta = \delta x \text{ for all } \delta \in \tilde{G} \}. $$

Using the automorphisms $\Ad_\bullet$, define the centralizer and normalizer in $\tilde{G}$ of a subset $A \subset G$ as
\begin{align*}
  Z_{\tilde{G}}(A) & := \{\delta \in \tilde{G} : \Ad_\delta(a)=a, \text{ for all } a \in A \}, \\
  N_{\tilde{G}}(A) & := \{\delta \in \tilde{G} : \Ad_\delta(A)=A \}.
\end{align*}

As in the untwisted case, there is the adjoint action of $G$ on $\tilde{G}$ via $\delta \mapsto x\delta x^{-1}$. Its orbits are called the conjugacy classes in $\tilde{G}$. Set
\begin{align*}
  G^\delta & := \Stab_G(\delta), \\
  G_\delta & := \Stab_G(\delta)^\circ .
\end{align*}

\paragraph{Measures}
Assume $\tilde{G}(F) \neq \emptyset$. The bitorsor structure permits us to equip $\tilde{G}(F)$ with invariant measures. More precisely, let us fix $\delta_0 \in \tilde{G}(F)$. Given a left (resp. right) Haar measure $\mu$ on $G(F)$, we can transport it to $\tilde{G}(F)$ via the homeomorphism $G(F) \rightiso \tilde{G}(F)$, $x \mapsto x\delta_0$, thus obtain a left (resp. right) $G(F)$-invariant Radon measure $\mu \cdot \delta_0$ on $\tilde{G}(F)$. Similarly, by using the homeomorphism $x \mapsto \delta_0 x$, we obtain left (resp. right) measures $\delta_0 \cdot \mu$. In particular, $\tilde{G}(F)$ admits a bi-invariant measure if $G(F)$ is unimodular.

Let $\mu$ be a left or right Haar measure on $G(F)$. We have $\delta_0 \cdot \mu = \Delta_G(\Ad_{\delta_0}) \mu \cdot \delta_0$, where $\Delta_G(\Ad_{\delta_0}) \in \R_{>0}$ is the modulus of the automorphism $\Ad_{\delta_0}$, defined by
$$ \mu(f \circ \Ad_{\delta_0}) = \Delta_G(\Ad_{\delta_0}) \mu(f), \quad f \in C_c^\infty(G(F)). $$

When $G(F)$ is unimodular and $\Delta_G(\Ad_{\delta_0})=1$ for all $\delta_0$, one can readily verify that $\mu \cdot \delta_0 = \delta_0 \cdot \mu$ only depends on $\mu$. This will be the case when (1) $G$ is reductive and $\theta$ is of finite order, or (2) when $G$ is a torus.

Moreover, since $\tilde{G}(F)$ is homeomorphic to $G(F)$, the usual notion of $C_c^\infty$-functions and distributions carries over to $\tilde{G}(F)$. 

\subsection{Some structure theory}
Assume henceforth that $G$ is a connected reductive $F$-group and $\tilde{G}(F) \neq \emptyset$, and that the outer automorphism class $\theta$ is of finite order.

\paragraph{Parabolic and Levi subspaces}
A parabolic subspace of $(G,\tilde{G})$ is a pair $(P, \tilde{P})$ such that $P$ is a parabolic subgroup of $G$, $\tilde{P} := N_{\tilde{G}}(P)$ and we require $\tilde{P}(F) \neq \emptyset$. Since $N_G(P)=P$, we see that $(P,\tilde{P})$ forms a twisted space and $P$ is uniquely determined by $\tilde{P}$. A twisted Levi component of $\tilde{P}$ is a pair $(M,\tilde{M})$ where $M$ is a Levi component of $P$ and $(M,\tilde{M})$ is a twisted subspace of $(P,\tilde{P})$; the second condition is equivalent to $\tilde{M} := \tilde{P} \cap N_{\tilde{G}}(M)$. In this case, it can be shown that $\tilde{M}(F) \neq \emptyset$; see \cite[1.6]{Wa09}. Write $P=MU$ for the Levi decomposition where $U$ is the unipotent radical of $P$, then $\tilde{P}=\tilde{M}U$ accordingly.

A Levi subspace $(M,\tilde{M})$ of $(G,\tilde{G})$ is a Levi component of some parabolic subspace. Let $A_{\tilde{M}}$ denote the maximal split torus in $Z_{\tilde{M}}$. By \cite[\S 1.2]{Wa09-ep}, we have $M = Z_G(A_{\tilde{M}})$, $\tilde{M} = Z_{\tilde{G}}(A_{\tilde{M}})$, and all Levi subspaces of $\tilde{G}$ arise from split tori in this way.

\paragraph{Maximal tori}
A maximal torus of $(G,\tilde{G})$ is a pair $(T,\tilde{T})$ such that
\begin{itemize}
  \item $T$ is a maximal torus in $G$, $\tilde{T} \subset \tilde{G}$;
  \item there exists a Borel pair $(B,T)$ of $G \times_F \bar{F}$ such that $\tilde{T} = N_{\tilde{G}}(T) \cap N_{\tilde{G}}(B)$ over $\bar{F}$;
  \item $\tilde{T}(F) \neq \emptyset$.
\end{itemize}
Therefore $(T,\tilde{T})$ is a twisted subspace. Recall that we have a well-defined automorphism $\theta = \theta_{\tilde{T}}$ on $T$ since $T$ is commutative. Define $T^\theta$ as the subtorus of $\theta$-fixed points and $T_\theta := (T^\theta)^\circ$. We can also define the split torus $A_{\tilde{T}}$ as above. Then $(T,\tilde{T})$ is called $F$-elliptic if $A_{\tilde{T}} = A_{\tilde{G}}$. In the language of \cite[\S 2.12]{L10}, $\tilde{T}$ is called a maximal torus and $T_\theta$ is called a Cartan subgroup.

\paragraph{Regular semisimple elements}
The quasi-semisimple elements of $\tilde{G}$ are defined as those $\delta \in \tilde{G}$ such that $\Ad_\delta$ fixes a Borel pair of $G \times_F \bar{F}$. For such $\delta$, it is known that $G_\delta$ is reductive \cite[9.4]{St68}. For a quasi-semisimple $\delta \in \tilde{G}$, define its Weyl discriminant as
\begin{gather}\label{eqn:Weyl-discriminant}
  D^{\tilde{G}}(\delta) := \det(1-\Ad_\delta | \mathfrak{g}/\mathfrak{g}_\delta) .
\end{gather}

\begin{definition}
  An element $\delta \in \tilde{G}$ is called regular semisimple if $\delta$ is quasi-semisimple and $G_\delta$ is torus (cf. \cite[(2.10.4) and \S 2.11]{L10}). A regular semisimple element $\delta \in \tilde{G}$ is called strongly regular if $G^\delta$ is abelian.
\end{definition}
A fundamental fact is that the regular semisimple elements form a Zariski open dense subset of $\tilde{G}$ (see \cite[\S 2.11]{L10}), which is certainly invariant by the adjoint action. One should bear in mind that $G^\delta$ could be non-connected even for $\delta$ in general position.

In this article, we will only use strongly regular elements. Set
$$ \tilde{G}_\text{reg} := \{ \delta \in \tilde{G} : \text{semisimple strongly regular} \}. $$

The following construction can be found in \cite[\S 2.12 and \S 2.19]{L10}. For any regular semisimple $\delta \in \tilde{G}(F)$, let $T := Z_G(G_\delta)$, $\tilde{T} := T \delta$. Then $(T,\tilde{T})$ is a maximal torus. Conversely, every maximal torus arises in this way. Note that $G^\delta = T^\theta$, $G_\delta = T_\theta$. A regular semisimple element $\delta \in \tilde{G}(F)$ is called elliptic if its associated maximal torus $\tilde{T}$ is.

\subsection{Representations}\label{sec:twisted-rep}
\paragraph{Twisted representations}
Let $(G,\tilde{G})$ be a twisted space. A representation of $\tilde{G}(F)$ is a triplet $(\pi,\tilde{\pi},V)$, often abbreviated as $\tilde{\pi}$, where
\begin{itemize}
  \item $\pi: G(F) \to \Aut_\C(V)$ is a representation of $G(F)$;
  \item $\tilde{\pi}: \tilde{G}(F) \to \Aut_\C(V)$ is a map such that
  $$ \tilde{\pi}(x\delta y) = \pi(x)\tilde{\pi}(\delta)\pi(y) $$
  for all $x,y \in G(F)$ and $\delta \in \tilde{G}(F)$.
\end{itemize}
Two representations $(\pi_1,\tilde{\pi}_1,V_1)$, $(\pi_2,\tilde{\pi}_2,V_2)$ are called equivalent if there exist $\phi: V_1 \rightiso V_2$ and $A \in \Aut_\C(V_1)$ such that $\pi_1 = \varphi^{-1} \pi_2 \varphi$ and $\tilde{\pi}_1 = A \varphi^{-1} \tilde{\pi}_2 \varphi$. We call $(\pi, \tilde{\pi}, V)$ irreducible if there is no subspace $V' \subsetneq V$, $V' \neq \{0\}$ which is invariant by all $\tilde{\pi}(\delta)$. This is in general weaker than the irreducibility of $\pi$. If $\pi$ is irreducible, we will call $\tilde{\pi}$ strongly irreducible. We say that $\tilde{\pi}$ is smooth (resp. admissible) if $\pi$ is. In what follows, all representations of $\tilde{G}(F)$ are assumed to be smooth.

To a representation $(\pi, \tilde{\pi}, V)$ we can associate its contragredient $(\pi^\vee, \tilde{\pi}^\vee, V^\vee)$ as follows. Certainly, $(\pi^\vee, V^\vee)$ is the usual contragredient of $(\pi,V)$. For $\delta \in \tilde{G}(F)$, define $\tilde{\pi}^\vee(\delta)$ by $\angles{\tilde{\pi}^\vee(\delta)\check{v},v} = \angles{\check{v}, \tilde{\pi}(\delta)^{-1} v}$ for all $\check{v} \in V^\vee$, $v \in V$.

\paragraph{Characters}
Fix a Haar measure on $G(F)$. By choosing $\delta_0 \in \tilde{G}(F)$, we obtain a bi-invariant measure on $\tilde{G}(F)$ using the bitorsor structure. Let $\tilde{\pi}$ be an admissible representation of $\tilde{G}(F)$. For each $f \in C_c^\infty(\tilde{G}(F))$, set
$$ \Theta_{\tilde{\pi}}^{\tilde{G}}(f) := \Tr \left(\;\int_{\tilde{G}(F)} f(\delta)\tilde{\pi}(\delta) \dd \delta \right). $$
This is well-defined by the admissibility of $\tilde{\pi}$. This defines an invariant distribution on $\tilde{G}(F)$. The following result is due to Clozel.
\begin{theorem}[{\cite[Theorem 1]{Cl87}}]
  If $\tilde{\pi}$ is an irreducible admissible representation of $\tilde{G}(F)$, then the distribution $\Theta_{\tilde{\pi}}^{\tilde{G}}$ is a locally integrable function on $\tilde{G}(F)$, which is locally constant on $\tilde{G}_{\text{reg}}(F)$. Moreover, $|D^{\tilde{G}}|^{\frac{1}{2}} \Theta_{\tilde{\pi}}^{\tilde{G}}$ is locally bounded on $\tilde{G}(F)$.
\end{theorem}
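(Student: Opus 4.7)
My plan is to reduce the statement to the familiar Harish--Chandra regularity theorem for a (possibly disconnected) reductive $p$-adic group, following Clozel's original strategy. Fix a base point $\delta_0 \in \tilde{G}(F)$ and let $A := \tilde{\pi}(\delta_0)$, $\theta_0 := \Ad_{\delta_0}$. The bitorsor identification $G(F) \rightiso \tilde{G}(F)$, $x \mapsto x\delta_0$, together with the relation $\tilde{\pi}(x\delta_0) = \pi(x)A$ with $A\pi(y)A^{-1} = \pi(\theta_0(y))$, shows that $\tilde{\pi}$ is essentially the restriction to the non-identity coset of an irreducible representation of the semidirect product $G(F) \rtimes \langle \theta_0 \rangle$, where the action factors through a finite cyclic quotient because $\theta$ has finite order. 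Under this dictionary, strongly regular semisimple elements of $\tilde{G}(F)$ correspond exactly to strongly $\theta_0$-regular elements in the non-identity component, and the twisted Weyl discriminant $D^{\tilde{G}}$ matches the usual one for the disconnected group.

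Once this reduction is made, the proof is an adaptation of Harish-Chandra's arguments to the disconnected setting. First, one checks density and openness of $\tilde{G}_\text{reg}(F)$ (already recorded above, cf. \cite[\S 2.11]{L10}) and establishes, via a submersion argument applied to the conjugation map $G(F) \times \tilde{T}(F) \to \tilde{G}(F)$ for maximal tori $(T,\tilde{T})$, that $\Theta_{\tilde{\pi}}^{\tilde{G}}$ is represented by a locally constant function on $\tilde{G}_\text{reg}(F)$. Second, the twisted analogue of Howe's finiteness conjecture gives admissibility of twisted orbital integrals on compact mod center subsets; this provides the finite-dimensional quotients one needs to control $\Theta_{\tilde{\pi}}^{\tilde{G}}$ near singular elements. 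Third, a Shalika-germ expansion around a quasi-semisimple element $\delta$, combined with twisted parabolic descent to the reductive group $G_\delta$ and induction on the semisimple rank, promotes local constancy on $\tilde{G}_\text{reg}(F)$ to local integrability on all of $\tilde{G}(F)$ and to the boundedness of $|D^{\tilde{G}}|^{1/2} \Theta_{\tilde{\pi}}^{\tilde{G}}$.

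The main obstacle, compared with the connected untwisted case, is exactly the point where $\Ad_\delta$ enters: the stabilizer $G^\delta$ may fail to be connected even for strongly regular $\delta$, and one must carefully distinguish $G^\delta = T^\theta$ from $G_\delta = T_\theta$ when writing down centralizers, orbital integrals, and the submersion used to pass from $\tilde{T}(F)$ to $\tilde{G}(F)$. Equally delicate is the twisted germ expansion at a non-identity quasi-semisimple $\delta$, which requires a $\theta_0$-equivariant exponential/Cayley map on a neighbourhood of $\delta$ in $\tilde{G}$; this is where one genuinely uses that $\theta$ has finite order, so that the averaging over $\langle \theta_0 \rangle$ produces well-defined $\theta_0$-fixed neighbourhoods inside $\mathfrak{g}_\delta$. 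Granted these ingredients, the regularity, local constancy, and local boundedness claims follow exactly as in \cite[Theorem~1]{Cl87}.
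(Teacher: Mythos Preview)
The paper does not prove this theorem at all: it is stated with the attribution \cite[Theorem~1]{Cl87} and used as a black box. There is therefore no ``paper's own proof'' to compare against. Your sketch is a reasonable outline of Clozel's original argument (reduction via a base point to a disconnected reductive group, then Harish--Chandra's regularity machinery adapted to non-identity components), but for the purposes of this paper the result is simply quoted.
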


Let $\tilde{\pi}$ be as above, we define the normalized character of $\tilde{\pi}$ as the locally bounded function on $\tilde{G}(F)$
$$ I^{\tilde{G}}(\tilde{\pi}, \cdot) :=  |D^{\tilde{G}}(\cdot)|^{\frac{1}{2}} \Theta_{\tilde{\pi}}^{\tilde{G}}(\cdot). $$

Note that as locally integrable functions, $\Theta_{\tilde{\pi}}^{\tilde{G}}$ and $I^{\tilde{G}}(\tilde{\pi},\cdot)$ are independent of the choice of measure on $\tilde{G}(F)$

\paragraph{Matrix coefficients}
Let $(\tilde{\pi},\pi,V)$ be a representation of $\tilde{G}(F)$. For $\check{v} \otimes v \in V^\vee \otimes V$, define the corresponding matrix coefficient as the smooth function on $\tilde{G}(F)$
$$ f_{\check{v} \otimes v}: \delta \mapsto \angles{\check{v}, \tilde{\pi}(\delta)v}. $$
This defines a linear map $V^\vee \otimes V \to C(\tilde{G}(F))$. Let $\mathcal{A}(\tilde{\pi})$ be its image.

Assume henceforth $\tilde{\pi}$ admissible and strongly irreducible, i.e.\ $\pi$ is irreducible. We know that $V^\vee \otimes V \rightiso \mathcal{A}(\tilde{\pi})$. By forgetting the twist, we also have the space of usual matrix coefficients $V^\vee \otimes V \rightiso \mathcal{A}(\pi)$. Hence there is an isomorphism $f \mapsto f^\circ$ from $\mathcal{A}(\tilde{\pi})$ to $\mathcal{A}(\pi)$ which satisfies $(f_{\check{v} \otimes v})^\circ(1) = \angles{\check{v}, v}$.

For $f \in \mathcal{A}(\tilde{\pi})$, one can check that $f(z^{-1} \delta z) = f(\delta)$ for all $z \in Z_G(F)$, $\delta \in \tilde{G}(F)$. To state the next result, we also have to fix a Haar measure on $Z_G(F)$. This permits to define the formal degree $d(\pi)$ when $\pi$ is essentially square-integrable modulo $Z_G(F)$.

\begin{proposition}\label{prop:orbint-character}
  Assume $\pi$ to be supercuspidal and irreducible. Let $f \in \mathcal{A}(\tilde{\pi})$. Let $\delta \in \tilde{G}_\text{reg}(F)$. If $\delta$ is elliptic in $\tilde{G}$, then
  $$ f^\circ(1) \Theta^{\tilde{G}}_{\tilde{\pi}}(\delta) = d(\pi) \int_{Z_G(F) \backslash G(F)} f(x^{-1} \delta x) \dd x. $$
\end{proposition}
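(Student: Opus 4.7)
The plan is to define the operator-valued integral
\[
  A \;:=\; \int_{Z_G(F) \backslash G(F)} \tilde{\pi}(x^{-1}\delta x)\,\dd x
\]
on $V$ (interpreted weakly by pairing against $V^\vee \otimes V$), show it is a scalar multiple of the identity via Schur's lemma, and identify the scalar with $d(\pi)^{-1}\Theta^{\tilde G}_{\tilde\pi}(\delta)$. Writing $f = f_{\check v \otimes v}$, for any $\check w \otimes w \in V^\vee \otimes V$ the integrand pairs to the matrix coefficient $x \mapsto \angles{\check w, \tilde\pi(x^{-1}\delta x)\,w}$; supercuspidality of $\pi$ makes this compactly supported modulo $Z_G(F)$, and the ellipticity of $\delta$ makes $G^\delta(F)/Z_G(F)$ compact, so the integral converges absolutely on $Z_G(F)\backslash G(F)$.

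A routine change of variable $x\mapsto xy$ together with the unimodularity of $G(F)$ gives $\pi(y)^{-1}A\pi(y)=A$ for every $y\in G(F)$, hence $A\in \End_{G(F)} V = \C\cdot\identity_V$ by irreducibility and admissibility of $\pi$. Writing $A = c(\delta)\cdot\identity_V$ and pairing against $\check v \otimes v$ gives immediately
\[
  c(\delta)\,f^\circ(1) \;=\; c(\delta)\angles{\check v,v} \;=\; \angles{\check v, Av} \;=\; \int_{Z_G(F)\backslash G(F)} f(x^{-1}\delta x)\,\dd x,
\]
so the target formula reduces to the identification $c(\delta) = d(\pi)^{-1}\Theta^{\tilde G}_{\tilde\pi}(\delta)$.

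For this last identification I plan to fix a base point $\delta_0 \in \tilde G(F)$, set $\theta := \Ad_{\delta_0}$ and $P_0 := \tilde\pi(\delta_0) \in \Aut_\C V$, so that $\tilde\pi(\delta_0 y) = P_0\,\pi(y)$ for $y\in G(F)$. Using $x^{-1}\delta_0 = \delta_0\,\theta^{-1}(x^{-1})$ and the substitution $x = \theta(u)$ (which has trivial modulus, since $\theta$ is of finite order on the unimodular group $G(F)$), the orbital integral of $f$ at $\delta$ is rewritten as the $\theta$-twisted orbital integral at $\gamma_0 := \delta_0^{-1}\delta$ of the matrix coefficient $y \mapsto \angles{\check v, P_0\,\pi(y)\,v}$ of $\pi$, while $\Theta^{\tilde G}_{\tilde\pi}(\delta) = \Tr(P_0\,\pi(\gamma_0))$ becomes the $\theta$-twisted character of $\pi$ at $\gamma_0$ with intertwiner $P_0$. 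The desired identity for $c(\delta)$ is then the $\theta$-twisted analog of Harish-Chandra's classical formula expressing the character of a supercuspidal representation on the elliptic regular set as the orbital integral of a normalised matrix coefficient, divided by the formal degree. This twisted analog---provable by $\theta$-twisted Schur orthogonality applied to pseudocoefficients of $\pi$ regarded as a $\theta$-stable irreducible supercuspidal, which is the content of Clozel's analysis in the framework of twisted spaces---is the main technical point; the convergence and the Schur's-lemma reduction to a scalar are routine by comparison.
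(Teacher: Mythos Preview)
Your approach is correct and ultimately rests on the same input as the paper's: the twisted Harish--Chandra character formula for supercuspidals on the elliptic locus. The paper cites this directly as \cite[(4.1.3)]{L10} (stated there with an auxiliary average over a compact open subgroup $K$), and its only extra work is to use ellipticity of $\delta$---which makes $Z_G(F)\backslash Z_G(F)G_\delta(F)$ compact---to show the integral over $Z_G(F)\backslash G(F)$ converges absolutely, so the $K$-average can be absorbed. You instead perform an explicit Schur's-lemma reduction before invoking the same formula via Clozel; this step is fine but redundant, since the cited result is already equivalent to the full proposition.

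Two small imprecisions worth fixing. First, the function $x\mapsto f(x^{-1}\delta x)$ is not ``compactly supported modulo $Z_G(F)$'' as you write: it is left $G^\delta(F)$-invariant, so compact support on $Z_G(F)\backslash G(F)$ would already require $Z_G(F)\backslash Z_G(F)G^\delta(F)$ compact. What you need is integrability, which follows by combining exactly that compactness (ellipticity) with convergence of the orbital integral over $G_\delta(F)\backslash G(F)$ (supercuspidality)---this is the paper's argument. Second, $\Tr(P_0\,\pi(\gamma_0))$ is not literally defined for infinite-dimensional $V$; you mean the value at $\gamma_0$ of the locally integrable function representing the twisted character distribution. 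Your appeal to ``pseudocoefficients'' is also slightly off: pseudocoefficients live in $C_c^\infty(\tilde G(F))$, not in $\mathcal{A}(\tilde\pi)$; what is actually being invoked is the twisted analogue of Harish--Chandra's formula relating the character to orbital integrals of genuine matrix coefficients.
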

\begin{proof}
  This is essentially a consequence of {\cite[(4.1.3)]{L10}}, in which the expression $f(x^{-1} \delta x)$ is replaced by
  $$ \text{vol}(K)^{-1} \int_K f(x^{-1}k^{-1} \delta kx) \dd k $$
  where $K$ is any open compact subgroup of $G(F)$. Our assumption on $\delta$ implies the compactness of $Z_G(F) \backslash Z_G(F) G_\delta(F) = Z_{\tilde{G}}(F) \backslash G_\delta(F)$. Since $x \mapsto f(x^{-1} \delta x)$ is integrable over $Z_G(F) G_\delta(F) \backslash G(F)$, it is also integrable over $Z_G(F) \backslash G(F)$. Hence the integral over $K$ can be absorbed into the outer one, as required.
\end{proof}
\begin{remark}
  It would be desirable to extend the result to $\pi$ square-integrable modulo $Z_G(F)$, cf.\ \cite[Proposition 5]{Cl91}. This requires more knowledge about the Schwartz-Harish-Chandra space in twisted setting.
\end{remark}

\subsection{Orbital integrals}\label{sec:orbint}
\paragraph{Ordinary orbital integrals}
Let $\delta \in \tilde{G}_\text{reg}(F)$. It is known that the orbit $\{x \delta x^{-1} : x \in G(F)\}$ is closed in $\tilde{G}(F)$ and $G^\delta(F), G_\delta(F)$ are unimodular (see \cite[\S 5.1]{L10}). Choose a Haar measure on $G_\delta(F)$. The unnormalized orbital integral can thus be defined as the distribution
\begin{gather}\label{eqn:orbint}
  O^{\tilde{G}}_{\delta}(f) := [G^\delta(F) : G_\delta(F)]^{-1} \int_{G_\delta(F) \backslash G(F)} f(x^{-1} \delta x) \dd x, \qquad f \in C_c^\infty(\tilde{G}(F))
\end{gather}
where $G_\delta(F) \backslash G(F)$ is equipped with the quotient measure. The normalized orbital integral is defined by
$$ I^{\tilde{G}}(\delta,f) := |D^{\tilde{G}}(\delta)|^{\frac{1}{2}} O^{\tilde{G}}_\delta(f). $$

The following result is well-known. For the lack of an appropriate reference, we give a quick proof below.
\begin{proposition}\label{prop:orbint-bound}
  Let $f \in C_c^\infty(\tilde{G}(F))$. Fix a maximal torus $\tilde{T}$ of $\tilde{G}$ and fix a Haar measure on $T_\theta(F)$, which permits to define $I^{\tilde{G}}(\delta,f)$ for $\delta \in \tilde{G}_{\mathrm{reg}} \cap \tilde{T}(F)$. Then $\delta \mapsto I^{\tilde{G}}(\delta,f)$ is a locally bounded function on $\tilde{G}_{\mathrm{reg}} \cap \tilde{T}(F)$.
\end{proposition}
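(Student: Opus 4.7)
The plan is to adapt Harish-Chandra's descent argument to the twisted setting. The easy half handles points of $\tilde{G}_{\mathrm{reg}} \cap \tilde{T}(F)$: at a strongly regular $\delta_1$, the decomposition $\mathfrak{g} = \mathfrak{t}_\theta \oplus (1-\Ad_{\delta_1})\mathfrak{g}$ (valid since $\theta$ is of finite order and $\mathfrak{g}^{\delta_1} = \mathfrak{t}_\theta$) makes the twisted conjugation map $G(F) \times \tilde{T}(F) \to \tilde{G}(F)$, $(x,\delta) \mapsto x\delta x^{-1}$, submersive at $(1,\delta_1)$. Combined with a partition of unity on $\tilde{G}(F)$ and a Fubini argument against a tubular chart, this realises $\delta \mapsto I^{\tilde{G}}(\delta,f)$ as a smooth, hence locally bounded, function in a neighbourhood of $\delta_1$ inside $\tilde{T}(F)$. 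The substantive task is to control the orbital integral as $\delta$ approaches a singular element $\delta_0 \in \tilde{T}(F) \setminus \tilde{G}_{\mathrm{reg}}$.

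At such $\delta_0$, I would set $M := G_{\delta_0}$, a connected reductive subgroup of $G$ containing $T$ (reductive by quasi-semisimplicity of $\delta_0$), and form the $M$-bitorsor $\tilde{M} := M\delta_0 = \delta_0 M$; since $\Ad_{\delta_0}$ is trivial on $M$, $\tilde{M}$ is well-defined and $\delta_0 \in Z_{\tilde{M}}(F)$. The core tool will be a twisted analogue of Harish-Chandra's descent lemma (in the spirit of \cite[\S 5]{L10}): on a sufficiently small open invariant neighbourhood $U$ of $\delta_0$ in $\tilde{G}(F)$ one constructs $f^{(M)} \in C_c^\infty(\tilde{M}(F))$ such that
$$ I^{\tilde{G}}(\delta,f) = |D^{\tilde{G}/\tilde{M}}(\delta)|^{\frac{1}{2}} \, I^{\tilde{M}}(\delta, f^{(M)}), \qquad \delta \in U \cap \tilde{T}(F) \cap \tilde{G}_{\mathrm{reg}}, $$
where the ratio $D^{\tilde{G}/\tilde{M}} := D^{\tilde{G}}/D^{\tilde{M}}$ is regular on $\tilde{M}$ in a neighbourhood of $\delta_0$ and non-zero at $\delta_0$ by the very definition of $M$, hence bounded on $U$ after further shrinking.

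This descent reduces the claim to bounding $\delta \mapsto I^{\tilde{M}}(\delta, f^{(M)})$ locally at $\delta_0$. Since $\delta_0$ is central in $\tilde{M}$, left multiplication by $\delta_0^{-1}$ identifies a neighbourhood of $\delta_0$ in $\tilde{M}(F)$ with a neighbourhood of $1$ in $M(F)$, transporting the twisted orbital integral of $f^{(M)}$ at $\delta = \delta_0 t$ into the ordinary orbital integral on the connected reductive group $M(F)$, at $t$, of the translated function $x \mapsto f^{(M)}(\delta_0 x) \in C_c^\infty(M(F))$, modulo a controlled prefactor arising from the index $[M^t(F):M_t(F)]$. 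The classical Harish-Chandra local boundedness of normalized orbital integrals on $M(F)$ then delivers the conclusion.

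The principal obstacle, to my mind, will be a clean and correct statement of the twisted descent formula: one must reconcile the Haar-measure normalisation on $T_\theta(F)$ with those implicit in \eqref{eqn:orbint} for both $\tilde{G}$ and $\tilde{M}$, and must keep track of the non-connectedness of twisted centralizers, since the gap between $G^{\delta_0}$ and $G_{\delta_0} = M$ affects the index factors $[G^\delta(F):G_\delta(F)]^{-1}$ appearing in \eqref{eqn:orbint}. This is essentially bookkeeping rather than a conceptual hurdle, but it is easy to bungle, and is the only genuine technical point in an otherwise standard proof.
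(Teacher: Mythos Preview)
Your proposal is correct and follows essentially the same strategy as the paper: twisted descent to the centralizer of a singular limit point, followed by an appeal to Harish-Chandra's classical local boundedness result. The paper's proof is terser --- it cites Waldspurger's packaged twisted descent \cite[2.4]{Wa08} together with \cite[1.1, 1.3]{KS} to pass directly to the untwisted Lie algebra, and then invokes \cite[Theorem 13]{HC70} --- whereas you descend to the (untwisted) bitorsor $\tilde{M} = M\delta_0$ and stay at the group level before citing Harish-Chandra; this is a cosmetic difference, since the group and Lie-algebra statements are interchangeable via the exponential map near the identity of $M(F)$.
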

\begin{proof}
  By twisted descent for $O^{\tilde{G}}_{\delta}(f)$ \cite[2.4]{Wa08} and $|D^{\tilde{G}}(\delta)|^{\frac{1}{2}}$ (use \cite[1.1 and 1.3]{KS}), it suffices to prove a similar untwisted assertion on Lie algebras. Now apply \cite[Theorem 13]{HC70}.
\end{proof}

Now we can state the twisted Weyl integration formula. Let $(T,\tilde{T})$ be a maximal torus and set
\begin{align*}
  T(F)_{/\theta} & := (1-\theta)T(F) \backslash T(F), \\
  \tilde{T}(F)_{/\theta} & := (1-\theta)T(F) \backslash \tilde{T}(F).
\end{align*}

Then $(T(F)_{/\theta}, \tilde{T}(F)_{/\theta})$ is a twisted space. We can also regard $\tilde{T}(F)_{/\theta}$ as the space of $T(F)$-conjugacy classes in $\tilde{T}(F)$, therefore $D^{\tilde{G}}(\cdot)$ factors through $\tilde{T}(F)_{/\theta}$. The projection map $T_\theta(F) \to T(F)_{/\theta}$ is locally a homeomorphism. Two measures on $T_\theta(F)$, $T(F)_{/\theta}$ are called compatible if $T_\theta(F) \to T(F)_{/\theta}$ preserves these measures locally.

\begin{proposition}[{\cite[\S 1.4]{Wa09-ep}} or {\cite[(5.3.6)]{L10}}]\label{prop:Weyl-int}
  Let $f$ be an integrable continuous function on $\tilde{G}_\mathrm{reg}(F)$. Then
  \begin{multline*}
    \int_{\tilde{G}_\mathrm{reg}(F)} f(\delta) \dd\delta = \sum_{\tilde{M} \in \mathcal{L}^{\tilde{G}}(M_0)} |W^M_0| |W^G_0|^{-1} \\
    \sum_{\tilde{T}} |W(M(F),\tilde{T}(F))|^{-1} \int_{\tilde{T}(F)_{/\theta}} |D^{\tilde{G}}(t)| O^{\tilde{G}}_t(f) \dd t
  \end{multline*}
  where
  \begin{itemize}
    \item $M_0$ is a chosen minimal Levi subgroup of $G$;
    \item $\mathcal{L}^{\tilde{G}}(M_0)$ is the set of Levi subspaces $(M,\tilde{M})$ with $M \supset M_0$;
    \item $W^M_0$ is the Weyl group of $M$ with respect to $M_0$, and similarly for $W^G_0$;
    \item $(T,\tilde{T})$ runs through the $M(F)$-conjugacy classes of elliptic maximal tori of $(M,\tilde{M})$;
    \item $W(M(F),\tilde{T}(F)) := N_{M(F)}(\tilde{T}(F))/T(F)$;
    \item the measure on $\tilde{T}(F)_{/\theta}$ is induced from that on $T(F)_{/\theta}$, which is compatible with the Haar measure on $T_\theta(F)$.
  \end{itemize}
\end{proposition}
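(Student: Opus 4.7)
The plan is to adapt Harish-Chandra's classical proof of the Weyl integration formula to the twisted setting, following the general philosophy of \cite{L10} and \cite{Wa08}. The argument proceeds in three steps: partition the regular locus, compute the Jacobian of a conjugation map, and reorganize the sum by Levi subspaces.

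First, decompose $\tilde{G}_{\mathrm{reg}}(F)$ into orbits parametrized by maximal tori. By \S 3.2, every $\delta \in \tilde{G}_{\mathrm{reg}}(F)$ belongs to a unique maximal torus $(T,\tilde{T})$ characterized by $T = Z_G(G_\delta)$, so $\tilde{G}_{\mathrm{reg}}(F)$ is the disjoint union, over $G(F)$-conjugacy classes of maximal tori $\tilde{T}$, of the $G(F)$-conjugates of $\tilde{T}_{\mathrm{reg}}(F)$. Two regular elements $t, t' \in \tilde{T}(F)$ are $T(F)$-conjugate exactly when they differ by $(1-\theta)T(F)$, so after this quotient one is reduced to the $W(G(F),\tilde{T}(F))$-action on $\tilde{T}(F)_{/\theta}$, whose generic stabilizer is trivial by strong regularity.

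Second, for each conjugacy class of maximal torus consider the surjection
\[ \varphi_{\tilde{T}} : (T_\theta(F) \backslash G(F)) \times \tilde{T}_{\mathrm{reg}}(F)_{/\theta} \longrightarrow \tilde{G}_{\mathrm{reg}}(F), \qquad (x,t) \longmapsto x^{-1} t x, \]
which by the previous step is generically $|W(G(F),\tilde{T}(F))|$-to-one onto its image. Identifying the tangent space to $\tilde{G}(F)$ at $t$ with $\mathfrak{g}(F)$ via the right action of the bitorsor, the differential of $\varphi_{\tilde{T}}$ decomposes as the identity on $\mathfrak{t}_\theta$ (the tangent of $\tilde{T}_{/\theta}$) and $(\mathrm{Ad}_t - 1)$ on $\mathfrak{g}/\mathfrak{t}_\theta$, since $\mathfrak{g}_\delta = \mathfrak{t}_\theta$ at regular $\delta$. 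The absolute Jacobian is therefore $|\det(1 - \mathrm{Ad}_t \mid \mathfrak{g}/\mathfrak{t}_\theta)| = |D^{\tilde{G}}(t)|$, by the definition \eqref{eqn:Weyl-discriminant}. The change of variables and Fubini — legitimate thanks to Proposition \ref{prop:orbint-bound} — yield
\[ \int_{\tilde{G}_{\mathrm{reg}}(F)} f(\delta) \, \dd\delta \; = \; \sum_{\tilde{T}/\mathrm{conj}} |W(G(F),\tilde{T}(F))|^{-1} \int_{\tilde{T}(F)_{/\theta}} |D^{\tilde{G}}(t)| \, O^{\tilde{G}}_t(f) \, \dd t. \]

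Third, regroup the sum by Levi subspaces. Each maximal torus $(T,\tilde{T})$ has its canonical split part $A_{\tilde{T}}$, and $\tilde{M} := Z_{\tilde{G}}(A_{\tilde{T}})$ is the unique Levi subspace of $\tilde{G}$ in which $\tilde{T}$ is elliptic. Hence $G(F)$-conjugacy classes of maximal tori are in bijection with $G(F)$-conjugacy classes of pairs $(\tilde{M}, \tilde{T})$ with $\tilde{T}$ elliptic in $\tilde{M}$. Passing to $M(F)$-conjugacy inside each $\tilde{M}$ introduces the standard combinatorial factor $|W^M_0|\,|W^G_0|^{-1}$, reflecting that a given Levi containing $M_0$ has $|W^G_0|/|W^M_0|$ conjugates among Levi subspaces of $\tilde{G}$. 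The claimed formula follows.

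The main obstacle is the Jacobian step: one must carefully identify tangent spaces at a point of $\tilde{G}$ with $\mathfrak{g}$ using the bitorsor structure, confirm that the quotient by $(1-\theta)T(F)$ precisely removes the kernel of the differential in the $\tilde{T}$-direction, and reconcile the compatibility of Haar measures on $T_\theta(F)$ and $T(F)_{/\theta}$ stipulated in the statement. The Levi reorganization and integrability issues are routine once the Jacobian is in place.
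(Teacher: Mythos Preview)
The paper does not prove this proposition at all; it simply cites \cite[\S 1.4]{Wa09-ep} and \cite[(5.3.6)]{L10} as references. Your sketch is essentially the argument one finds there, so in that sense you are on the same track as the literature the paper relies upon.

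That said, there is a genuine imprecision in your second step. The map
\[
  \varphi_{\tilde{T}} : (T_\theta(F) \backslash G(F)) \times \tilde{T}_{\mathrm{reg}}(F)_{/\theta} \longrightarrow \tilde{G}_{\mathrm{reg}}(F), \qquad (x,t) \longmapsto x^{-1} t x,
\]
is \emph{not} well-defined as written: replacing $t$ by $sts^{-1}$ for $s \in T(F)$ (which is what passage to $\tilde{T}(F)_{/\theta}$ means) changes $x^{-1}tx$ to $(s^{-1}x)^{-1}t(s^{-1}x)$, and $s^{-1}x$ need not equal $x$ in $T_\theta(F)\backslash G(F)$ unless $s \in T_\theta(F)$. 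What \emph{is} well-defined is the inner integral $\int_{T_\theta(F)\backslash G(F)} f(x^{-1}tx)\,\dd x$ as a function of the class of $t$ in $\tilde{T}(F)_{/\theta}$, since the change of variables $x \mapsto s^{-1}x$ is measure-preserving. So your integral identity survives, but the justification via ``$\varphi_{\tilde{T}}$ is generically $|W(G(F),\tilde{T}(F))|$-to-one'' is not literally correct. The clean way to set this up is to work with the diagonal $T(F)$-quotient of $G(F) \times \tilde{T}_{\mathrm{reg}}(F)$, or equivalently to compute the Jacobian before quotienting and then use Fubini together with the class-invariance of the orbital integral.

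A smaller point: in your third step, the combinatorial factor $|W^M_0||W^G_0|^{-1}$ does not simply count $G(F)$-conjugates of $M$ containing $M_0$; one must also track how $M(F)$-conjugacy classes of elliptic tori in $\tilde M$ amalgamate under $G(F)$-conjugacy, and the normalizer $N_{W^G_0}(M)$ rather than $W^M_0$ governs the count of standard Levis. The bookkeeping is routine but deserves a sentence.
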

Here the orbital integral $O^{\tilde{G}}_t(f)$ is defined as in \eqref{eqn:orbint}, using the chosen measure on $T_\theta(F)$.

\paragraph{Stable orbital integrals}
By definition, call $\delta_1, \delta_2 \in \tilde{G}_\text{reg}(F)$ stably conjugate if they are conjugate by $G(\bar{F})$.  Call two maximal tori $(T_1, \tilde{T}_1)$, $(T_2, \tilde{T}_2)$ of $(G,\tilde{G})$ stably conjugate if there exists $x \in G(\bar{F})$ such that $\Ad_x$ induces an $F$-isomorphism $(T_1, \tilde{T}_1) \rightiso (T_2, \tilde{T}_2)$.

Choose Haar measures on $T_\theta(F)$, for every maximal tori $(T,\tilde{T})$, so that for stably conjugate $T_1$, $T_2$, the measures on $T_{1,\theta}(F)$ and $T_{2,\theta}(F)$ match. Define stable orbital integrals as follows: for each $\delta \in \tilde{G}_\text{reg}(F)$, set

\begin{gather}\label{eqn:storbint}
  SO^{\tilde{G}}_{\delta}(f) := \sum_{\delta'} O^{\tilde{G}}_{\delta'}(f), \qquad f \in C_c^\infty(\tilde{G}(F))
\end{gather}
where $\delta'$ ranges over the conjugacy classes in the stable conjugacy class of $\delta$ and $O^{\tilde{G}}_{\delta'}(f)$ is defined using the measure prescribed above. The normalized stable orbital integral is defined by
$$ S^{\tilde{G}}(\delta,f) := |D^{\tilde{G}}(\delta)|^{\frac{1}{2}} SO^{\tilde{G}}_\delta(f). $$

As before, for each maximal torus $(T,\tilde{T})$, we define $F$-varieties
\begin{align*}
  T_{/\theta} & := (1-\theta)T \backslash T, \\
  \tilde{T}_{/\theta} & := (1-\theta)T \backslash \tilde{T}.
\end{align*}
Then $(T_{/\theta}, \tilde{T}_{/\theta})$ is a twisted space. We can regard $\tilde{T}_{/\theta}(F)$ as the space of stable conjugacy classes in $\tilde{T}(F)$ under $T$-action. The projection map $T^\theta(F) \to T_{/\theta}(F)$ is locally a homeomorphism. Two measures on $T_\theta(F)$, $T_{/\theta}(F)$ are called compatible if $T_\theta(F) \to T_{/\theta}(F)$ preserves these measures locally.

\begin{proposition}\label{prop:Weyl-int-st}
    Let $f$ be an integrable continuous function on $\tilde{G}_\mathrm{reg}(F)$. Then
  \begin{multline*}
    \int_{\tilde{G}_\mathrm{reg}(F)} f(\delta) \dd\delta = \sum_{\tilde{M} \in \mathcal{L}^{\tilde{G}}(M_0)} |W^M_0| |W^G_0|^{-1} \sum_{\tilde{T}} |W(M,\tilde{T})(F)|^{-1} \\
    \int_{\tilde{T}_{/\theta}(F)} |D^{\tilde{G}}(t)| SO^{\tilde{G}}_t(f) \dd t
  \end{multline*}
  where
  \begin{itemize}
    \item $(T,\tilde{T})$ runs through the $M(F)$-stable conjugacy classes of elliptic maximal tori of $(M,\tilde{M})$;
    \item $W(M,\tilde{T}) := N_M(\tilde{T})/T$ as an $F$-group, and $W(M,\tilde{T})(F)$ is the group of its $F$-points;
    \item the measure on $\tilde{T}_{/\theta}(F)$ is induced from that on $T_{/\theta}(F)$, which is compatible with the Haar measure on $T_\theta(F)$.
  \end{itemize}
\end{proposition}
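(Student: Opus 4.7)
The plan is to deduce the stable formula directly from the ordinary twisted Weyl integration formula of Proposition \ref{prop:Weyl-int} by regrouping its terms along stable conjugacy classes, both of tori and of their elements. Fix a Levi subspace $\tilde{M} \in \mathcal{L}^{\tilde{G}}(M_0)$ and partition the $M(F)$-conjugacy classes of elliptic maximal tori $(T, \tilde{T})$ of $(M, \tilde{M})$ into stable classes. For each stable class, pick a representative $(T_0, \tilde{T}_0)$ and let $\mathcal{D}$ be the finite set of $M(F)$-conjugacy classes it subsumes. Standard Galois-cohomological arguments in the twisted setting parametrize $\mathcal{D}$ by a finite subset of $H^1(F, T_{0,\theta})$, and for each $\tilde{T} \in \mathcal{D}$ supply an element $g_{\tilde{T}} \in M(\bar{F})$ with $\Ad_{g_{\tilde{T}}}: (T_0, \tilde{T}_0) \rightiso (T, \tilde{T})$ over $\bar{F}$.

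The key observation is that conjugacy $t \mapsto xtx^{-1}$ on $\tilde{T}$ reads $t \mapsto x\theta(x^{-1})\,t$, so two elements of $\tilde{T}(F)$ are stably conjugate under $T(\bar{F})$ exactly when they represent the same point of $\tilde{T}_{/\theta}(F)$. Hence the canonical map $\tilde{T}(F)_{/\theta} \to \tilde{T}_{/\theta}(F)$ has finite fibers, corresponding to the rational classes inside each stable class. For a regular $t_0 \in \tilde{T}_0(F)$, every rational $M(F)$-conjugacy class stably equivalent to $t_0$ admits, after transport by some $g_{\tilde{T}}$, a representative in $\tilde{T}(F)$ for a unique $\tilde{T} \in \mathcal{D}$. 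Combining the contributions of the ordinary formula over all $\tilde{T} \in \mathcal{D}$, and using that the measures on $T_\theta(F)$ were chosen compatibly across the stable class, the partial sum rewrites as
\[
\bigl(\text{combinatorial factor}\bigr) \cdot \int_{\tilde{T}_{0,/\theta}(F)} |D^{\tilde{G}}(t)| \, SO^{\tilde{G}}_t(f) \dd t,
\]
by the very definition \eqref{eqn:storbint} of $SO^{\tilde{G}}_t$. Summing now over stable classes of elliptic tori in $\tilde{M}$, and then over $\tilde{M}$, yields the stated identity.

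The main obstacle is the matching of the combinatorial constants, i.e.\ the identity
\[
\sum_{\tilde{T} \in \mathcal{D}} \frac{\bigl|\{\text{rational classes in a given stable class in } \tilde{T}\}\bigr|}{|W(M(F), \tilde{T}(F))|} \;=\; \frac{1}{|W(M, \tilde{T}_0)(F)|}.
\]
Both sides count, with appropriate weights, the rational $M(F)$-conjugacy classes of pairs $(\tilde{T}, t)$ within a single stable class. In the untwisted case this is classical, via the exact sequence
\[
1 \to W(M, \tilde{T}_0)(F) \to (N_M(\tilde{T}_0)/T_0)(F) \to H^1(F, T_0) \to H^1(F, N_M(\tilde{T}_0)),
\]
combined with the torsor interpretation of $\mathcal{D}$. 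The twisted variant requires replacing $H^1(F, T_0)$ by the twisted cohomology $H^1(F, T_{0,\theta})$ and verifying that the rational Weyl group $W(M(F), \tilde{T}(F))$ coincides with the stabilizer, inside $W(M, \tilde{T}_0)(\bar{F})$, of the cocycle labelling $\tilde{T}$. Once this cohomological bookkeeping is in place, the derivation of the stable formula from the ordinary one is formal.
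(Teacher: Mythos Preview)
Your approach is essentially the same as the paper's: the paper's proof is a single sentence stating that one can either modify the proof of Proposition~\ref{prop:Weyl-int} directly in the stable setting, or collect its terms according to stable conjugacy, and you have chosen and elaborated the second option. Your sketch of the cohomological bookkeeping (transporting tori within a stable class, comparing $W(M(F),\tilde{T}(F))$ with $W(M,\tilde{T}_0)(F)$ via the standard exact sequence, and tracking fibers of $\tilde{T}(F)_{/\theta} \to \tilde{T}_{/\theta}(F)$) goes beyond what the paper writes out, though the displayed combinatorial identity would benefit from specifying precisely which notion of ``rational class'' (under $T(F)$, $M(F)$, or $G(F)$) is meant and how the $W(M,\tilde T_0)(F)$-action on representatives enters.
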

\begin{proof}
  This can be proved either by modifying the proof of Proposition \ref{prop:Weyl-int} in the stable setting, or by collecting the terms in Proposition \ref{prop:Weyl-int} according to stable conjugacy.
\end{proof}

Note that $T_\theta(F) \to T_{/\theta}(F)$ is equal to the composition $T_\theta(F) \to T(F)_{/\theta} \twoheadrightarrow T_{/\theta}(F)$; all arrows in sight are local homeomorphisms.

\begin{remark}\label{rem:untwisted-notation}
  When $(G,\tilde{G})$ is untwisted, we will remove the $\sim$ in the notations and write $O^G_\gamma(f)$, $\Theta^G_\pi(f)$, etc. (where $f \in C^\infty_c(G(F))$) in the untwisted setting, without further explanations. This conforms to Arthur's notations in \cite{ArEndo}.
\end{remark}

In the untwisted setting, strongly regular semisimple elements form a Zariski open dense subset.

\subsection{Relation to non-connected groups}\label{sec:nonconnected}
The language of non-connected reductive groups is used in other literature such as \cite{Cl87}. Recall that an $F$-group $G^+$ is called reductive if $(G^+)^\circ$ is, for example the orthogonal group $\Or(V,q)$ of an $F$-quadratic space $(V,q)$. This notion is related to twisted spaces as follows.

Let $(G,\tilde{G})$ be a twisted space and $\theta$ be the associated outer automorphism. Assume $\theta$ to be of finite order. Choose any $\delta \in \tilde{G}$, then there exist an integer $n$ and $x \in G_\text{AD}(F)$ such that $(\Ad_\delta)^n = \Ad_x$, where $G_\text{AD}$ denotes the adjoint group of $G$. Upon increasing $n$, we may even assume $x \in G(F)$. Let $\Ad_\delta^\Z$ be the free abelian group generated by the symbol $\Ad_\delta$. It acts on $G$ in the obvious manner. Define the quotient group
$$ G^+ := \left( G \rtimes \Ad_\delta^\Z \right) / \text{ the normal subgroup generated by }( x^{-1} \rtimes \Ad_\delta^n ). $$

Then $G^+$ is a non-connected reductive group, and $\tilde{G}$ can be identified as a $G$-bitorsor with $G \rtimes \Ad_\delta \subset G^+$ via $x\delta \mapsto x \rtimes \Ad_\delta$. Conversely, for any non-connected reductive group $G^+$ and a connected component $\tilde{G}$ of $G^+$, we get a twisted space $(G := (G^+)^\circ, \tilde{G})$.

By fixing the base point $\delta \in \tilde{G}$ and writing $\theta := \Ad_\delta$, we may also reduce to the $(G,\theta)$-twisted situation in \cite{KS}. Indeed, the homeomorphism $x\delta \mapsto x$ between $\tilde{G}(F)$ and $G(F)$ permits to identify $C_c^\infty(\tilde{G}(F))$ and $C_c^\infty(G(F))$. Under the identification, we have
$$ O^{\tilde{G}}_{x\delta}(f) = [G^{x\delta}(F) : G_{x\delta}(F)]^{-1} \int_{G_{x\delta}(F) \backslash G(F)} f(g^{-1} x \theta(g)) \dd g . $$

A representation $(\pi,\tilde{\pi},V)$ becomes a representation $(\pi,V)$ of $G(F)$ equipped with an intertwining operator $A: V \rightiso V$ such that $\pi\circ\theta = A \pi A^{-1}$. Indeed, up to equivalence we have $A = \tilde{\pi}(\delta)$. Its twisted character becomes the distribution $f \mapsto \Tr(\pi(f) \circ A)$ on $G(F)$.

Conversely, given a connected reductive $F$-group $G$ and $\theta \in \Aut(G)$, we can form the space $\tilde{G}$ which is isomorphic to $G$ as $F$-varieties, whose elements are written formally as $g\theta$, $g \in G$, with the $G$-bitorsor structure given by $x(g\theta)y = xg\theta(y)\theta$ for all $x,y,g \in G$. Then $(G,\tilde{G})$ is a twisted space whose outer automorphism class is exactly that of $\theta$.

\subsection{Example: $\tGL_E(N)$}\label{sec:twisted-eg}
Let $E/F$ be a field extension with $[E:F] \leq 2$. Set $\tau$ to be the nontrivial element in $\text{Gal}(E/F)$ if $[E:F]=2$, otherwise $\tau := \identity$. Let $H$ be a finite-dimensional $E$-vector space. Define
$$ \tGL_E(H) := \{q: H \times H \to E : \text{ non-degenerate } (E,\tau)-\text{sesquilinear form} \}. $$

This is an $F$-variety. Regard $\GL_E(H)$ as an $F$-group by restriction of scalars. There is a natural $\GL_E(H)$-bitorsor structure on $\tGL_E(H)$, namely for $q \in \tGL_E(H)$,
\begin{gather}
  xqy : (v,v') \mapsto q(x^{-1}v|yv), \quad x,y \in \GL_E(H).
\end{gather}

Recall that $\tGL_E(H)$ can also be identified with $\Isom_E(H,H^\vee)$: to $Y \in \Isom_E(H,H^\vee)$ we associate $q_Y: (v,v') \mapsto \angles{Yv,v'}$. The bitorsor structure can be easily described:
\begin{gather}\label{eqn:Isom-bitorsor}
  \check{y}Yx^{-1} \leftrightarrow x q_Y y, \quad x,y \in \GL_E(H).
\end{gather}

One readily checks that $(\GL_E(H),\tGL_E(H))$ forms a twisted space. In particular, the conjugacy class of $q$ is the space of $(E,\tau)$-sesquilinear forms on $H$ which are equivalent to $q$. To simplify notations, we use the same symbol $\tGL_E(H)$ to denote the set of its $F$-points. 

If moreover a basis $e_1, \ldots, e_N$ of $H$ over $E$ is chosen, the twisted space will also be denoted by $(\GL_E(N), \tGL_E(N))$. The outer automorphism class associated to $(\GL(N),\tGL(N))$ is nothing but that of $x \mapsto {}^* x^{-1}$, where ${}^* x$ is the hermitian transpose $x$.

When $E=F$, we write simply $(\GL(H),\tGL(H))$; this is the twisted space of non-degenerate bilinear forms on $H$. Once a basis is chosen, we will denote it by $(\GL(N), \tGL(N))$ as before.

For the reader's comfort, we record the following result.
\begin{lemma}
  In $\tGL_E(H)$, every regular semisimple element is strongly regular. In particular, strongly regular semisimple elements form a Zariski dense open subset in $\tGL_E(H)$.
\end{lemma}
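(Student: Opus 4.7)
Write $\delta = q_Y$ with $Y \in \Isom_E(H,H^\vee)$ and set $\theta := \Ad_\delta$, so that $\theta(x) = Y^{-1}(\check{x})^{-1} Y$ for $x \in G := \GL_E(H)$. A direct manipulation, using the canonical identification $\check{\check{x}} = x$ and the anti-multiplicativity $\check{AB} = \check{B}\check{A}$, yields
\[
 \theta^2(x) = N\,x\,N^{-1}, \qquad N := Y^{-1} \check{Y} \in G,
\]
so that $\theta^2 = \Ad_N$ is inner. This reflects the fact that the outer automorphism class attached to $\tGL_E(H)$ is of order at most two. Consequently every $x \in G^\delta$, being fixed by $\theta$, is also fixed by $\theta^2$, which gives $Nx = xN$; equivalently
\[
 G^\delta \subseteq Z_G(N).
\]

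The heart of the argument is to show that, under the regularity hypothesis on $\delta$, the element $N$ is regular semisimple in $\GL_E(H)$---i.e.\ has $n := \dim_E H$ pairwise distinct eigenvalues over $\bar{F}$. My plan is to exploit the linear-algebraic parametrization of regular semisimple twisted conjugacy classes of $\tGL_E(H)$: such classes correspond to triples $(L, \iota, \beta)$ where $L$ is an $n$-dimensional étale $E$-algebra, $\iota$ is an $E$-involution of $L$ extending $\tau$, and $\beta \in L^\times$. Under this dictionary the torus $T := Z_G(G_\delta)$ identifies with $L^\times$ (viewed inside $\End_E(H)$), the Cartan subgroup $G_\delta = T_\theta$ is the $\iota$-fixed part of $T$, and $N$ corresponds to $\beta\,\iota(\beta)^{-1} \in L$. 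The hypothesis that $G_\delta$ be a torus of full ``twisted rank'' then forces $\beta\,\iota(\beta)^{-1}$ to generate $L$ as an $E$-algebra, whence $N$ is regular in $\GL_E(H)$ and $Z_G(N) = T$.

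Combined with the first step, this gives $G^\delta \subseteq Z_G(N) = T$, a commutative group; hence $G^\delta$ is abelian and $\delta$ is strongly regular. The ``in particular'' clause follows from the Zariski density of the regular semisimple locus already recalled from \cite{L10} in \S\ref{sec:twisted}.

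\textbf{Main obstacle.} The reduction $G^\delta \subseteq Z_G(N)$ is formal and uses nothing beyond the bitorsor structure and the involutive nature of the outer class. The substantive point is establishing the regularity of $N$, which requires the parametrization of twisted conjugacy classes in $\tGL_E(H)$ by involutive algebra data. This is a feature specific to twisted spaces of sesquilinear forms; for a generic twisted reductive group the notions ``regular semisimple'' and ``strongly regular'' genuinely differ, and the content of the lemma is precisely that this pathology does not occur for $\tGL_E(H)$.
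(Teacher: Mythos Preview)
Your reduction $G^\delta \subseteq Z_G(N)$ via $\theta^2=\Ad_N$ with $N=Y^{-1}\check Y$ is correct and a clean way to organize the argument. But the substantive step---that $N$ is regular semisimple in $\GL_E(H)$ whenever $\delta$ is regular semisimple in $\tGL_E(H)$---is exactly what you defer to the parametrization by $(L,\iota,\beta)$ and to the implication ``$G_\delta$ a torus of full twisted rank $\Rightarrow \beta\,\iota(\beta)^{-1}$ generates $L$.'' That parametrization and that implication are precisely the content of the explicit centralizer computations in \cite[\S1.3]{Wa07} (case $E=F$) and \cite[W.II.B.2]{CHLN11} (case $[E:F]=2$), which is what the paper cites. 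So at bottom your approach and the paper's coincide: both rest on the same external description of twisted centralizers. Your write-up supplies helpful scaffolding around that citation but not an independent proof of the key step.

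One point to tighten: the parametrization $(L,L_\pm,x)$ with $F[x]=L$ appears in \S\ref{sec:classes} for \emph{strongly} regular classes, so invoking it to deduce regular $\Rightarrow$ strongly regular is circular as written. The primary sources do establish the classification at the level of merely regular (indeed arbitrary semisimple) twisted classes, from which the equality $G^\delta=G_\delta$ is read off directly; your proposal should make that dependency explicit rather than leave it hidden in the phrase ``linear-algebraic parametrization of regular semisimple twisted conjugacy classes.'' Likewise, the assertion that $G_\delta$ being a torus forces $\beta\,\iota(\beta)^{-1}$ to generate $L$ is not automatic and is again part of what those references supply.
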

\begin{proof}
  This follows from the explicit description of centralizers of semisimple elements: when $E=F$, this can be found in \cite[1.3]{Wa07}; when $[E:F]=2$, see \cite[W.II.B.2]{CHLN11}.
\end{proof}

\section{Twisted endoscopy for $\tGL(2n)$}\label{sec:twisted-endoscopy}
\subsection{$\tGL(2n)$ and its elliptic endoscopic data}\label{sec:tGL}
\paragraph{The twisted $\tGL(2n)$}
Our setting is the case ``$\nu=1$'' in \cite{Wa10}, which conforms to Arthur's setting in \cite[\S 1.2]{ArEndo}.

In this section, we consider an $F$-vector space $H$ of dimension $N=2n$. Fix a basis
$$ e_1, \ldots, e_n, e_{-n}, \ldots, e_{-1} $$
for $H$. We will study the twisted space of non-degenerate bilinear forms on $H$ defined in \S\ref{sec:twisted-eg}, denoted by $(\GL(2n),\tGL(2n))$.

We fix the base point $\tilde{\theta} \in \tGL(2n)$ defined by
$$ \tilde{\theta}(e_i|e_j) = \begin{cases}
  (-1)^i \delta_{i,-j}, & 1 \leq i \leq n,\; \forall j, \\
  (-1)^{i+1} \delta_{i,-j}, & -n \leq i \leq -1, \; \forall j,
\end{cases}
 $$
where $\delta_{i,-j}$ is Kronecker's delta. This defines a symplectic form on $F^{2n}$. Let $\theta := \Ad_{\tilde{\theta}}$. Unwinding definitions, we see that $\theta$ is the adjoint-inverse involution with respect to $\tilde{\theta}$, characterized by $\tilde{\theta}(\theta(x)^{-1}v|v') = \tilde{\theta}(v|xv')$ for all $x \in \GL(2n)$ and all $v,v'$. Moreover, $\GL(2n)^{\tilde{\theta}} = \GL(2n)_{\tilde{\theta}}$ is the symplectic group associated to $\tilde{\theta}$. We will denote the Lie algebra of $\GL(2n)_{\tilde{\theta}}$ by $\gl(2n)_{\tilde{\theta}}$.

Set
\begin{align*}
  B & := \Stab_{\GL(2n)}[ 0 \subset \angles{e_1} \subset \cdots \angles{e_1,\ldots,e_n,e_{-n}} \subset \cdots \subset \angles{e_1, \ldots, e_{-1}}], \\
  T & := \text{the diagonal subgroup of } \GL(2n).
\end{align*}
Then $(B,T)$ is a $\theta$-stable Borel pair, thus $(B,B\tilde{\theta})$ is a twisted Borel subgroup and $(T,T\tilde{\theta})$ is a twisted maximal torus. This can be seen, for example, by representing $\tilde{\theta}$ by an anti-diagonal matrix. Their $\theta$-fixed points $(B^\theta, T^\theta)$ form a Borel pair for $\GL(2n)_{\tilde{\theta}}$. These choices permit to define transfer factors unambiguously. We will return to this point in \S\ref{sec:Delta-formula1}.

\paragraph{Endoscopic data}
In this article, we only consider elliptic endoscopic data. The following definition is from \cite[p.12]{ArEndo}; see also \cite[\S 1.8]{Wa10}.

\begin{definition}
  An elliptic endoscopic datum for $\tGL(2n)$ is a triplet $(n_O, n_S, \chi)$, where
  \begin{itemize}
    \item $n_O, n_S \in \Z_{\geq 0}$ are even;
    \item $n_O + n_S = n$;
    \item $\chi: \Gamma_F \to \{\pm 1\}$ is a continuous character satisfying $\chi=1$ if $n_O=0$, and $\chi \neq 1$ if $n_O=2$.
  \end{itemize}
  To $(n_O, n_S, \chi)$ we associate the endoscopic group $G := \SO(V,q) \times \SO(n_S+1)$, where $\SO(n_S+1)$ is the split odd special orthogonal group associated to some $F$-quadratic space of dimension $n_S+1$, and $(V,q)$ is an $F$-quadratic space such that
  \begin{itemize}
    \item $\dim V = n_O$;
    \item $K = F[\sqrt{d_\pm(q)}]$ is the extension associated to $\chi$ by local class field theory (recall that $d_\pm(q) = (-1)^{n_O/2} \det q$ is the discriminant of $q$);
    \item $\SO(V,q)$ is quasisplit;
  \end{itemize}
\end{definition}
Note that $\SO(V,q) \neq \Gm$ by our conditions. Equivalently, $(V,q)$ is not isotropic of dimension two.

\begin{remark}\label{rem:bijections}
  The possible choices of $(V,q)$ can be described as follows. Write $K=F[\sqrt{\mathfrak{d}}]$ for the field extension associated to $\chi$, for some $\mathfrak{d} \in F^\times/F^{\times 2}$. We may assume $n_O > 0$, then
  $$ (V,q) \simeq (n_O-2)\Hy \oplus c\angles{1,-\mathfrak{d}} \simeq c ((n_O-2)\Hy \oplus \angles{1,-\mathfrak{d}}),$$
  where $c \in F^\times/F^{\times 2}$ is arbitrary. If $[K:F]=2$, $\angles{1,-\mathfrak{d}}$ is the quadratic form $x \mapsto N_{K/F}(x)$ on $E$; otherwise $c\angles{1,-\mathfrak{d}} \simeq \Hy$. Moreover, $K/F$ is determined by the minimal Levi subgroup of $\SO(V,q)$, which is a torus isomorphic to $(\Gm)^{n_O-2} \times \Ker N_{K/F}$ if $[K:F]=2$, otherwise split. In short, we have built up bijections
  \begin{gather}
    (n_O, n_S, \chi) \longleftrightarrow (n_O, n_S, K/F) \longleftrightarrow (n_O, n_S, \SO(V,q):\text{ quasisplit}).
  \end{gather}
\end{remark}

Let $(n_O, n_S, \chi)$ and $G = \SO(V,q) \times \SO(n_S + 1)$ as above. The general theory of twisted endoscopy furnishes a finite group $\Out_{2n}(G)$ defined in terms of automorphisms of $(n_O, n_S, \chi)$ \cite[p.18]{KS}. In our case, it is simply
$$ \Out_{2n}(G) = \Or(V,q) / \SO(V,q) = \Out(G), $$
where $\Out(\cdot)$ denotes the outer automorphism group. It is nontrivial if and only if $n_O > 0$, in which case $\Out_{2n}(G) \simeq \Z/2\Z$ is generated by any reflection of the quadratic space $(V,q)$. Hence we can regard $\Out_{2n}(G)$ as a group of $F$-automorphisms of $G$ modulo inner automorphisms coming from $G(F)$. Its actions on conjugacy classes and representations are thus well-defined.

An elliptic endoscopic datum $(n_O, n_S, \chi)$ is called simple if $n_O=0$ or $n_S=0$. Define the finite sets
\begin{align*}
  \mathcal{E}_\text{ell}(2n) & := \{ \text{elliptic endoscopic data of } \tGL(2n) \}, \\
  \mathcal{E}_\text{sim}(2n) & := \{ \text{simple endoscopic data of } \tGL(2n) \}.
\end{align*}

The general formalism of twisted endoscopy is quite involved: see \cite{KS}. Two simplifications occur in our case. Firstly, we do not need to pass to $z$-extensions of $G$ even though the derived group of $G$ is not simply connected. This is due to the existence of natural $L$-embeddings $\Lgrp{G} \hookrightarrow \Lgrp{\GL(2n)}$ for every elliptic endoscopic group $G$; see \cite[1.8]{Wa10}. Secondly, there is no need to twist the endoscopic group as in \cite{Lab04}.

For the geometric aspect, we only make use of the following concepts:
\begin{enumerate}
  \item correspondence between ``very regular'' semisimple classes -- \S\ref{sec:classes},
  \item Whittaker normalization of geometric transfer factors -- \S\ref{sec:Delta-formula1},
  \item explicit formula of transfer factor of the case $G=\SO(V,q)$, defined relative to various choices ($F$-splittings, etc.\ ) -- \S\ref{sec:Delta-formula2},
  \item transfer of orbital integrals -- \S\ref{sec:geom-trans}.
\end{enumerate}
Details will be given in the case $n_S=0$, $G=\SO(V,q)$. The spectral counterparts will be deferred to \S\ref{sec:crude-LLC}.

\subsection{Correspondence of very regular classes}\label{sec:classes}
We will use the parametrization in \cite[\S 1.3, \S 1.4]{Wa10}, paraphrased in terms of étale $F$-algebras as in \cite[\S 3]{Li11}. In each case under consideration, we are going to define a Zariski dense open subset of the subset of regular semisimple elements, called the set of very regular elements.

Besides the groups which intervene in the twisted endoscopy for $\tGL(2n)$, we also treat symplectic, unitary groups and $\tGL_E(N)$, which will be useful in \S\ref{sec:GSS-formalism}.

It should be emphasized that for $\SO(V,q)$ with even $\dim V$, we will only describe the $O(V,q)$-conjugacy classes therein.

\paragraph{Conjugacy classes in $\tGL(2n)$}
The strongly regular semisimple conjugacy classes in $\tGL(2n)$ are parametrized by triplets $(L,L_\pm,x)$, where
\begin{itemize}
  \item $L$ is an étale $F$-algebra of dimension $2n$;
  \item $L_\pm$ is the fixed subalgebra by an involution $\tau \in \Aut_F(L)$;
  \item $x \in L^\times$ generates $L$ over $F$.
\end{itemize}
Note that $\tau$ is determined by $L_\pm$, thus is omitted in the notation. This can be seen by decomposing $L_\pm$ into fields: $L_\pm = \prod_{i \in I} L_{\pm,i}$. Decompose $L = \prod_{i \in I} L_i$ accordingly, where $L_i$ is an étale $L_{\pm,i}$-algebra of dimension $2$. Hence we are reduced to the case $L_\pm$ is a field: if $L = L_\pm \times L_\pm$ then $\tau(x,y)=(y,x)$, otherwise $\tau$ is the nontrivial element in $\text{Gal}(L/L_\pm)$. This trick furnishes a dictionary between the terminology in \cite{Wa10} and ours.

To a triplet $(L,L_\pm,x)$, we choose an isomorphism of $F$-vector spaces $L \simeq F^{2n}$ and set the associated $\tilde{x} \in \tGL(2n)$ to be
$$ \tilde{x}(v|v') = \Tr_{L/F} (\tau(v)v'x), \quad v,v' \in L. $$
The conjugacy class of $\tilde{x}$ is independent of the choice of isomorphism $L \rightiso F^{2n}$. The pair of étale $F$-algebras $(L,L_\pm)$ are determined by $\tilde{x}$, up to isomorphism. The datum $x \in L^\times$, however, is determined only as an element in ${L_\pm}^\times/N_{L/L_\pm}(L^\times)$.

A strongly regular semisimple element of $\tGL(2n)$ is called very regular if its parameter $(L,L_\pm,x)$ satisfies
$$ \frac{x}{\tau(x)} \pm 1 \in L^\times .$$

\begin{remark}\label{rem:transposed-form}
  Let $\delta \in \tGL(2n)$ be parametrized by $(L,L_\pm,x)$, regarded as a non-degenerate bilinear form on $F^{}$. The following observation will be useful: if ${}^t \delta$ is the transposed form $(v,v') \mapsto \delta(v'|v)$, then $\delta + {}^t \delta$ is a quadratic form parametrized by $(L,L_\pm,x+\tau(x))$, provided that $x\tau(x)^{-1}-1 \in L^\times$. This assertion also holds in the sesquilinear context, which will be discussed later.
\end{remark}

\paragraph{Conjugacy classes in $\tGL(2n+1)$}
The strongly regular semisimple conjugacy classes in $\tGL(2n+1)$ are parametrized by quadruplets $(L,L_\pm,x,x_D)$, where $(L,L_\pm,x)$ is the same as the case of $\tGL(2n)$, and $x_D \in F^\times/F^{\times 2}$. The corresponding bilinear form $\tilde{x}$ is given by $q_x \oplus \angles{x_D}$, where $q_x$ is the bilinear form constructed from $(L,L_\pm,x)$ as before. The notion of very regular elements is unaltered; it does not depend on the extra parameter $x_D$.

\paragraph{Conjugacy classes in odd orthogonal groups}
Let $(V,q)$ be an $F$-quadratic space of dimension $2n+1$. For a semisimple element in $\SO(V,q)$, ``very regular'' means regular without eigenvalue $-1$ if regarded as an endomorphism of $V$. Conjugacy classes of such elements are parametrized by quadruplets $(L,L_\pm,y,c)$ where
\begin{itemize}
  \item $(L,L_\pm)$ and $\tau \in \Aut_F(L)$ are as above, $\dim_F L = 2n$;
  \item $y \in L^\times$ generates $L$ over $F$, and $y\tau(y)=1$;
  \item $c \in {L_\pm}^\times$, to which we associate the $F$-quadratic form on $L$ defined by
  $$ q_c(v|v') = \Tr_{L/F}(\tau(v)v' c), $$
  and we require that $(L,q_c) \oplus \angles{a} \simeq (V,q)$ for some $a \in F^\times$.
\end{itemize}
By Witt's cancellation theorem, $a$ is uniquely determined by $(L,L_\pm,c)$ up to $F^{\times 2}$. We look upon $\SO(L,q_c)$ as a subgroup of $\SO(V,q)$. To the datum $(L,L_\pm,y,c)$, we associate the element in $\SO(V,q)$ corresponding to $y \in \SO(L,q_c)$. As before, the triplet $(L,L_\pm,y)$ is uniquely determined, while $c$ is unique only in ${L_\pm}^\times/N_{L/L_\pm}(L^\times)$. This construction does not depend on the isomorphism $(L,q_c) \oplus \angles{a} \rightiso (V,q)$.

\paragraph{Conjugacy classes in even orthogonal groups}
Let $(V,q)$ be an $F$-quadratic space of dimension $2n$. For semisimple elements in $\SO(V,q)$, ``very regular'' means regular without eigenvalue $\pm 1$. Conjugacy classes by $\Or(V,q)$ of such elements are parametrized by quadruplets $(L,L_\pm,y,c)$ such that
\begin{itemize}
  \item $(L,L_\pm,y,c)$ satisfies the conditions in the odd orthogonal case, to which we associate the quadratic space $(L,q_c)$ by the same recipe;
  \item we require that $(L,q_c) \simeq (V,q)$.
\end{itemize}
Everything works in the same manner as for odd orthogonal groups, except that (1) there is no $\angles{a}$ and $\SO(L,q_c)$ can be identified with $\SO(V,q)$; (2) what we get are just the very regular $\Or(V,q)$-conjugacy classes. This suffices for our purpose. It can be shown that each very regular $\Or(V,q)$-conjugacy class splits into two $\SO(V,q)$-conjugacy classes.

\paragraph{Conjugacy classes in symplectic groups}
The symplectic case is the easiest. Let $(W,q)$ be a symplectic space of dimension $2n$, whose symplectic group is denoted by $\Sp(W,q)$. In this case, very regular means regular semisimple. Conjugacy classes in $\Sp(W,q)$ are parametrized by quadruplets $(L,L_\pm,y,c)$ such that
\begin{itemize}
  \item $(L,L_\pm,y)$ is as in the odd orthogonal case;
  \item $\dim_F L = 2n$;
  \item $c \in L^\times$ satisfies $\tau(c)=-c$.
\end{itemize}
To this datum we define a symplectic form on $L$ as $q_c: (v,v') \mapsto \Tr_{L/F} (\tau(v)v'c)$. By choosing an isomorphism of symplectic spaces $(W,q) \rightiso (L,q_c)$, we obtain the element in $\Sp(W,q)$ corresponding to $y$, whose conjugacy class is independent of the chosen isomorphism. The triplet $(L,L_\pm,y)$ is determined up to isomorphism, while $c$ is determined only up to multiplication by elements of $N_{L/L_\pm}(L^\times)$.

\paragraph{Conjugacy classes in unitary groups}
Let $E/F$ be a quadratic extension, $\tau$ be the nontrivial element in $\text{Gal}(E/F)$. Let $(V,h)$ be a $E/F$-hermitian space. Very regular elements of $\U(V,h)$ are the regular semisimple elements without eigenvalue $\pm 1$. They are parametrized by triplets $(L_\pm,y,c)$ such that
\begin{itemize}
  \item $L_\pm$ is an étale $F$-algebra;
  \item Set $L := L_\pm \otimes_F E$, which is an étale $E$-algebra, then $\tau$ induces an involution in $\Aut_{L_\pm}(L)$ which we denote by the same letter $\tau$;
  \item $y \in L^{\times}$ generates $L$ over $F$, and $y\tau(y)=1$;
  \item $c \in L_\pm$;
  \item the $E/F$-hermitian form on $L$ defined by $q_c: (v,v') \mapsto \Tr_{L/E}(\tau(v)v'c)$ is isomorphic to $(V,h)$.
\end{itemize}
To $(L_\pm,y,c)$ we associate the element corresponding to $y \in \U(L,q_c)$, whose conjugacy class is well-defined. The datum $c$ is unique in ${L_\pm}^\times/N_{L/L_\pm}(L^\times)$.

\paragraph{Conjugacy classes in $\tGL_E(N)$}
Let $(E/F,\tau)$ be as above. The strongly regular elements in $\tGL_E(N)$ are parametrized by data $(L,L_\pm,x)$, where $(L,L_\pm)$ is as in the case of unitary groups, $\dim_E L = N$, and $x \in L^\times$ is such that $F(x)=L$. The corresponding sesquilinear form is isomorphic to the form $(v,v') \mapsto \Tr_{L/E} (\tau(v)v'x)$ on $L$. The datum $x$ is determined up to multiplication by $N_{L/L_\pm}(L^\times)$. The notion of very regular elements is defined as in the case for $\tGL(2n)$.

Transposed forms ${}^t \delta$ and the associated hermitian forms ${}^t \delta + \delta$ are parametrized in the same manner as in the case of $\tGL(2n)$.

\paragraph{Centralizer and stable conjugacy}
Consider an element $\delta \in G(F)$, where $G$ is equal to one of the classical groups mentioned above; or alternatively $\delta \in \tilde{G}(F)$, where $\tilde{G}$ is the a twisted space considered in \S\ref{sec:twisted-eg}.

\begin{proposition}\label{prop:vreg-centralizer}
  If $\delta$ is very regular, then $G^\delta = G_\delta$ except for $\tilde{G}=\tGL(2n+1)$, in which case $G^\delta = G_\delta \cdot \{\pm 1\}$.
\end{proposition}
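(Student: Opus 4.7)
The strategy is to use the structural formula $G^\delta = T^\theta$, $G_\delta = T_\theta = (T^\theta)^\circ$ recalled in \S\ref{sec:twisted} (which reads $G^\delta = T$ in the untwisted cases), and to compute $T^\theta$ explicitly from the linear-algebra parameters of each class. Connectedness then reduces to recognising norm-one tori.

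First I would handle $\tGL(2n)$. Given $\delta$ with parameter $(L,L_\pm,x)$, the maximal torus is $T = \Res_{L/F}\Gm$, embedded in $\GL(H) \simeq \GL_F(L)$ via the $L$-module structure. Unwinding the bitorsor relation $\delta u = \theta(u) \delta$ via the formula $(xqy)(v|v') = q(x^{-1}v|yv')$, I expect to obtain $\theta(u) = \tau(u)^{-1}$ for $u \in L^\times$. Hence $T^\theta = \{u \in L^\times : u \tau(u) = 1\}$ is a product over the field-components of $L_\pm$ of norm-one tori $\ker N_{L_i/L_{\pm,i}}$. Each such factor is geometrically connected (it is either $\Gm$ or a one-dimensional anisotropic torus, according to whether $L_i$ splits over $L_{\pm,i}$), so $T^\theta$ is connected and coincides with $T_\theta$.

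For $\tGL(2n+1)$, the parameter $(L,L_\pm,x,x_D)$ produces $\delta = q_x \oplus \angles{x_D}$ on $L \oplus F$, with maximal torus $T = (\Res_{L/F}\Gm) \times \Gm$. The same computation on the $L$-block gives $\theta(u) = \tau(u)^{-1}$, while on the $\angles{x_D}$-block a direct bitorsor calculation yields $\theta(\alpha) = \alpha^{-1}$. Therefore
\[
T^\theta = U_{L/L_\pm} \times \mu_2, \qquad T_\theta = U_{L/L_\pm} \times \{1\},
\]
and the quotient $\mu_2 = \{\pm 1\}$, interpreted as $\pm \identity$ on the $\angles{x_D}$-line, is precisely the extra factor asserted by the proposition.

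The case $\tGL_E(N)$ is formally identical to that of $\tGL(2n)$ (with $F$ replaced by $E$ at appropriate places). For the untwisted cases ($\SO(V,q)$, $\Sp(W,q)$, $\U(V,h)$), $\delta$ acts on $L$ as multiplication by $y$, which generates $L$ over the base; hence any element commuting with $\delta$ is $L$-linear, i.e.\ multiplication by some $u \in L^\times$. Preserving the form $q_c$ forces $\tau(u) u = 1$, and so the centralizer is $U_{L/L_\pm}$, which is connected. In the even orthogonal case one also verifies $\det_F(u) = N_{L/F}(u) = N_{L_\pm/F}(1) = 1$, so $u \in \SO$ rather than only $\Or$, and no further reflection can commute with $\delta$ since $\delta$ has no eigenvalue $\pm 1$. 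The main obstacle is the explicit calculation of $\theta$ on $T$, which requires careful bookkeeping in the bitorsor conventions — most delicately on the $\Gm$-factor of $\tGL(2n+1)$, since that is the only place where a genuine disconnection $\mu_2$ arises. Once $\theta$ is pinned down, the rest rests on the well-known fact that norm-one subtori of étale quadratic $F$-algebras are connected.
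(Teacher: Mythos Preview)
Your approach is correct in outline and genuinely different from the paper's. The paper does not compute anything: it simply cites the explicit centralizer descriptions in \cite[\S 3]{Li11} for the untwisted classical groups and in \cite[p.194]{Wa07}, \cite[W.II.B.2]{CHLN11} for the twisted spaces, then remarks that in those descriptions the only sources of extra components are indices $i$ with $x_i\tau(x_i)^{-1}=\pm 1$, which the very-regularity hypothesis excludes (except for the forced $\{\pm 1\}$ on the one-dimensional summand in $\tGL(2n+1)$). Your route instead rebuilds the centralizer from scratch via the torus $T$ and the involution $u\mapsto\tau(u)^{-1}$; this is self-contained and makes the role of the hypothesis $x/\tau(x)\neq\pm 1$ visible without opening the references.

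Two points deserve tightening. First, in the twisted cases you assert that the maximal torus attached to $\delta$ is $T=\Res_{L/F}\Gm$, but the paper's definition is $T=Z_G(G_\delta)$, so invoking $G^\delta=T^\theta$ presupposes knowing $G_\delta$. You can break the circularity cheaply: the isometry group $G^\delta$ of $\delta$ also preserves ${}^t\delta$, hence commutes with the $\GL$-element $Y^{-1}\check Y$, which on $L$ is multiplication by $x\tau(x)^{-1}$; this element has pairwise distinct eigenvalues over $\bar F$ (this is exactly regularity), so its commutant is $L^\times$ and therefore $G^\delta\subset L^\times$. From there your computation $\theta(u)=\tau(u)^{-1}$ gives $G^\delta=L^1$, a product of norm-one tori, hence connected. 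Second, your unified treatment of the untwisted cases says ``$\delta$ has no eigenvalue $\pm 1$'', but in the odd orthogonal case very regular only forbids $-1$: the element always fixes the line $\langle a\rangle$ in $(L,q_c)\oplus\langle a\rangle$. The centralizer in $\SO(V,q)$ still comes out to $L^1$ (the $\pm 1$ on the $\langle a\rangle$-line is killed by $\det=1$), but your sentence about reflections does not cover this and should be replaced by the determinant argument.
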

\begin{proof}
  The assertion is well-known in the untwisted case, see eg.\ \cite[\S 3]{Li11}. As for the twisted case, this follows from the description of centralizers in \cite[p.194]{Wa07} and \cite[W.II.B.2]{CHLN11}. Actually, a slightly different parametrization is used therein. Let $L_\pm = \prod_{i \in I} L_{\pm,i}$ be the decomposition into fields, and decompose $L = \prod_{i \in I} L_i$ accordingly;. write $x = (x_i)_{i \in I}$. The parameters in \cite{Wa07,CHLN11} correspond to $(x_i\tau(x_i)^{-1})_{i \in I}$. Except in the case $\tGL(2n+1)$, the possible disconnectedness of $G^\delta$ is caused by those $i$ with $x_i\tau(x_i)^{-1} = \pm 1$, which is ruled out by our assumption.
\end{proof}

\begin{proposition}\label{prop:centralizer-desc}
  Let $\delta$ be very regular with parameter $(L,L_\pm,\ldots)$, then
  $$ G_\delta \simeq L^1 := \{a \in L^\times: a\tau(a)=1 \}, $$
  viewed as an $F$-group.
\end{proposition}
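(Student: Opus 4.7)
The statement is proved case by case, uniformly exploiting the parametrizations of \S\ref{sec:classes} to realize the ambient vector space explicitly as $L$ equipped with a bilinear or sesquilinear form coming from the trace pairing.

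\emph{Untwisted cases.} For $G = \SO(V,q)$, $\Sp(W,q)$, or $\U(V,h)$ with $\delta \in G(F)$ very regular and parameter $(L,L_\pm,y,c)$, identify the underlying space with $L$ so that $\delta$ becomes multiplication by $y \in L^\times$ and the form becomes $q_c(v|v') = \Tr_{L/F}(\tau(v)v'c)$ (respectively $q_c$ defined via $\Tr_{L/E}$ in the unitary case). Since $y$ generates $L$ over the base field, the commutant of $\delta$ in $\End_F(L)$ (resp.\ $\End_E(L)$) is precisely $L$, acting on itself by left multiplication. For $a \in L^\times$, the identity $q_c(av|av') = \Tr(\tau(a)a \cdot \tau(v)v'c)$ then forces $\tau(a)a = 1$. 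This yields $G^\delta \simeq L^1$ as $F$-group schemes, and Proposition~\ref{prop:vreg-centralizer} gives $G_\delta = G^\delta$.

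\emph{Twisted cases.} For $\delta \in \tilde{G}$ with parameter $(L,L_\pm,x)$, identify $H$ with $L$ so that $\delta$ corresponds to the form $\tilde{x}(v|v') = \Tr_{L/F}(\tau(v)v'x)$, or its sesquilinear analogue for $\tGL_E(N)$. Writing $m_a \in \GL(L)$ for multiplication by $a \in L^\times$, a direct unwinding of the bitorsor action shows that $m_a \delta m_a^{-1} = \delta$ if and only if $\tau(a)a = 1$, so we obtain a canonical embedding $L^1 \hookrightarrow G^\delta$. For the reverse inclusion at the level of identity components, I appeal to the explicit descriptions of twisted centralizers in \cite[1.3]{Wa07} (resp.\ \cite[W.II.B.2]{CHLN11}), already invoked in the proof of Proposition~\ref{prop:vreg-centralizer}: very regularity ensures via Remark~\ref{rem:transposed-form} that the symmetric and antisymmetric parts of $\delta$ are both non-degenerate, and the refined parametrization in those references then pins down $(G^\delta)^\circ = L^1$. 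In the $\tGL(2n+1)$ case, the extra $\pm 1$ summand of Proposition~\ref{prop:vreg-centralizer} lies outside $L^1$ and is killed by passage to the identity component.

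\emph{Where the work lies.} The easy half is always the inclusion $L^1 \hookrightarrow G^\delta$, which is a one-line verification. The substantive half in the twisted setting is to rule out a larger connected centralizer. The isometry group of $\tilde{x}$ is a priori the intersection $\Or(\tilde{x}^+) \cap \Sp(\tilde{x}^-)$ of the orthogonal and symplectic groups of its symmetric and antisymmetric parts, which is not obviously $L^1$. The key structural input is that the semisimple datum $x\tau(x)^{-1} \in L^1$ still generates $L$ as an $F$-algebra (precisely the content of very regularity), so its commutant in $\End_F(L)$ collapses to $L$ acting by multiplication; intersecting this commutant with the form-preserving subgroup then produces exactly $L^1$. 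This is the point at which the cited classical computations of \cite{Wa07,CHLN11} do the real work, and extending the resulting identification from $F$-points to an isomorphism of $F$-group schemes is routine, since the whole construction is functorial in the base algebra.
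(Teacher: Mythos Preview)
Your proposal is correct and takes essentially the same route as the paper: both defer the substantive computation of $G_\delta$ to the explicit centralizer descriptions in \cite{Wa07,Li11,CHLN11}, with your write-up additionally spelling out the easy inclusion $L^1 \hookrightarrow G^\delta$ and the commutant argument in the untwisted cases. One minor slip: in the odd orthogonal case the underlying space is $L \oplus \langle a\rangle$ rather than $L$ alone, so your identification needs the evident adjustment (the extra line acts trivially and the determinant condition in $\SO$ forces the scalar on it to be $+1$, since $N_{L/F}(a)=1$ for $a \in L^1$).
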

\begin{proof}
  This follows from the explicit description of $G_\delta$ in \cite{Wa07,Li11,CHLN11}, as alluded above.
\end{proof}

Assume now $G$ is a quasisplit $F$-group. Two semisimple elements $\delta,\gamma \in G(F)$ are called stably conjugate if they are conjugate by $G(\bar{F})$. 

Recall that stable conjugacy is also defined for elements in $\tilde{G}_\text{reg}(F)$ in \S\ref{sec:orbint}. To get the parametrization of strongly regular semisimple classes in $\tilde{G}(F)$, where $\tilde{G}$ is one of the twisted spaces in \S\ref{sec:twisted-eg}, we replace the $x \in L^\times/N_{L/L_\pm}(L^\times)$ in a parameter $(L,L_\pm,x)$ by its image in $L^\times/L_\pm^\times$.

To get the parametrization of very regular stable conjugacy classes in classical groups (modulo $O(V,q)$-conjugacy in the even orthogonal case), it suffices to forget the datum $c$ in the parametrization above. We will simply write $(L,L_\pm,y)$, etc., for the parameters of stable classes. 

\begin{definition}[Cf.\ {\cite[\S 1.9]{Wa10}}]\label{def:correspondence}
  Let $(n_O,n_S,\chi) \in \mathcal{E}_{\text{ell}}(2n)$ and $G = \SO(V,q) \times \SO(n_S + 1)$ be the associated endoscopic group. Let $\delta \in \tGL(2n,F)$ and $\gamma = (\gamma', \gamma'') \in G(F)$ such that $\gamma'$,$\gamma''$ are very regular elements. Suppose that $\delta$ (resp. $\gamma'$, $\gamma''$) is parametrized by $(L,L_\pm,x)$ (resp. $(L', L'_\pm, y')$, $(L'', L''_\pm, y'')$. We say $\delta$ corresponds to $\gamma$, written as $\delta \leftrightarrow \gamma$, if
  \begin{enumerate}
    \item there exists an isomorphism of pairs of $F$-algebras $\varphi: (L' \times L'', L'_\pm \times L''_\pm) \rightiso (L, L_\pm)$;
    \item letting $y := \varphi(y',y'')$, we have
    $$ \frac{x}{\tau(x)} = -y. $$
  \end{enumerate}
  In this case, $\delta$ is also very regular.
\end{definition}

Clearly, the correspondence only depends on stable conjugacy classes on both side.

\subsection{A separation lemma}
In this subsection, we concentrate on simple endoscopic data of the form $(2n,0,\chi)$, with endoscopic group $G = \SO(V,q)$ where $\dim V=2n$. By Remark \ref{rem:bijections}, it suffices to investigate the quasisplit groups $\SO(V,q)$.

\begin{lemma}\label{prop:separation}
  Let $\delta \in \tGL(2n,F)$ be very regular semisimple. If $(V_1,q_1)$, $(V_2,q_2)$ are $F$-quadratic spaces of dimension $2n$ such that for each $i=1,2$,
  \begin{itemize}
    \item $\SO(V_i,q_i)$ is quasisplit;
    \item there exists a very regular $\gamma_i \in \SO(V_i,q_i)$ such that $\gamma_i \leftrightarrow \delta$ via twisted endoscopy.
  \end{itemize}
  Then there is an isomorphism $\varphi: \SO(V_1,q_1) \simeq \SO(V_2,q_2)$, and $\varphi(\gamma_1), \gamma_2$ are stably conjugate up to $\Or(V_2,q_2)$. In particular, the corresponding endoscopic data are equivalent.
\end{lemma}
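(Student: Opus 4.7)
The plan is to exploit the parametrization of very regular classes from \S\ref{sec:classes} and reduce the statement to an invariant of étale algebras. Write $(L, L_\pm, x)$ for the parameter of $\delta$ and let $\tau$ be the involution fixing $L_\pm$; set $y := -x/\tau(x) \in L^1$. By the very regularity of $\delta$, the element $y$ generates $L$ as an $F$-algebra, so every $F$-algebra automorphism of $(L, L_\pm)$ fixing $y$ is the identity. Definition \ref{def:correspondence}, specialized to $n_S = 0$, provides for each $i = 1, 2$ an isomorphism $\varphi_i: (L_i, L_{i,\pm}) \rightiso (L, L_\pm)$ sending $y_i$ to $y$; by the rigidity just noted, $\varphi_i$ is unique. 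The triplets $(L_i, L_{i,\pm}, y_i)$ are thus canonically identified with $(L, L_\pm, y)$, and $(V_i, q_i) \simeq (L_i, q_{c_i})$ is transported via $\varphi_i$ into $(L, q_{c_i})$ for some class $c_i \in L_\pm^\times / N_{L/L_\pm}(L^\times)$.

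The crux is to show that the discriminant $d_\pm(q_c)$ is independent of $c$. Decomposing $L_\pm = \prod_k L_{\pm,k}$ and $L = \prod_k L_k$ into field factors (so that each $L_k/L_{\pm,k}$ is étale of rank two), the form $q_c$ is the orthogonal sum of the $q_{c_k}$. On each factor, introduce the $L_{\pm,k}$-bilinear symmetric form $H_k(v,v') := \Tr_{L_k/L_{\pm,k}}(\tau(v) v')$, which is intrinsic to $L_k/L_{\pm,k}$, so that $q_{c_k}(v,v') = \Tr_{L_{\pm,k}/F}(c_k H_k(v,v'))$. The classical formula for the determinant of a scaled trace form yields, modulo $F^{\times 2}$,
\[
  \det q_{c_k} \equiv N_{L_{\pm,k}/F}(c_k)^2 \cdot N_{L_{\pm,k}/F}(\det\nolimits_{L_{\pm,k}} H_k) \cdot d(L_{\pm,k}/F)^2 \equiv N_{L_{\pm,k}/F}(\det\nolimits_{L_{\pm,k}} H_k),
\]
since $c_k$ and the extension discriminant $d(L_{\pm,k}/F)$ enter only to even powers. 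Taking the product over $k$ shows that $\det q_c$, hence $d_\pm(q_c)$, depends only on $(L, L_\pm)$.

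It follows that $d_\pm(q_1) = d_\pm(q_2)$. Both $\SO(V_i, q_i)$ being quasisplit of rank $n$, Remark \ref{rem:bijections} implies that $(V_i, q_i)$ is determined up to isomorphism by its discriminant; therefore $(V_1, q_1) \simeq (V_2, q_2)$ as $F$-quadratic spaces, and this gives an $F$-group isomorphism $\varphi: \SO(V_1, q_1) \rightiso \SO(V_2, q_2)$. Both $\varphi(\gamma_1)$ and $\gamma_2$ are then very regular elements of $\SO(V_2, q_2)$ sharing the $\Or$-stable conjugacy parameter $(L, L_\pm, y)$, so they are stably conjugate up to $\Or(V_2, q_2)$. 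The equivalence of the endoscopic data is immediate from the bijection of Remark \ref{rem:bijections} between triplets $(n_O, n_S, \chi)$ and isomorphism classes of quasisplit endoscopic groups.

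The main obstacle is the discriminant computation in the second paragraph; once the scaled-trace-form identity is in place, the rest is bookkeeping. A secondary subtle point is the careful distinction between $\SO$- and $\Or$-conjugacy in even dimensions, since each very regular $\Or$-class splits into two $\SO$-classes; but the statement asks only for equivalence up to $\Or(V_2, q_2)$, which is precisely what the parametrization of \S\ref{sec:classes} records, so this issue does not actually arise.
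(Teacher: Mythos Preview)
Your proof is correct and follows essentially the same approach as the paper: both reduce to showing $\det q_1 \equiv \det q_2 \pmod{F^{\times 2}}$ via the determinant formula for the Scharlau transfer $\Tr_*$, then invoke the quasisplit hypothesis. The paper works directly with $L_\pm$ as an étale algebra (diagonalizing the rank-two $L_\pm$-form $c_1 M_{L/L_\pm} = \langle a,b\rangle$ and citing Serre for $\det(\Tr_*\langle ta\rangle) = N_{L_\pm/F}(t)\det(\Tr_*\langle a\rangle)$), whereas you decompose into field factors first; the substance is identical.

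One minor overclaim to fix: Remark \ref{rem:bijections} does \emph{not} say that the quadratic space $(V,q)$ is determined by its discriminant in the quasisplit range---only that $\SO(V,q)$ is. Indeed $(V,q)$ and $(V,cq)$ share both discriminant and special orthogonal group but need not be isometric when $K/F$ is a genuine quadratic extension (the Hasse invariants can differ). This does no damage to your argument: what the lemma asks for, and what Remark \ref{rem:bijections} actually delivers from $d_\pm(q_1)=d_\pm(q_2)$, is the isomorphism $\varphi:\SO(V_1,q_1)\rightiso\SO(V_2,q_2)$. Your final paragraph on $\Or$-stable conjugacy then goes through unchanged, since the stable-class parameter $(L,L_\pm,y)$ is insensitive to scaling $q$.
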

\begin{proof}
  Take the parameter $(L,L_\pm,x)$ for $\delta$. For $i=1,2$, if $\gamma_i \leftrightarrow \delta$, then Definition \ref{def:correspondence} implies that the stable conjugacy class of $\gamma_i$ up to $\Or(V_i,q_i)$ is parametrized by $(L,L^\pm, y)$ with $y := -x\tau(x)^{-1}$. Hence it suffices to prove the first assertion.

  Choose $(L,L_\pm,y)$ as above. By assumption, for $i=1,2$ there exists $c_i \in L_\pm^\times$ such that
  $$ (V_i, q_i) \simeq (L, (\Tr_{L_\pm/F})_* (c_i M_{L/L_\pm})) $$
  where $c_i M_{L/L_\pm}$ is the $L_\pm$-quadratic form given by
  $$ (v,v') \mapsto \Tr_{L/L_\pm}(\tau(v)v' c_i) = c_i \Tr_{L/L_\pm}(\tau(v)v'), \quad v,v' \in L ; $$
  and $(\Tr_{L_\pm/L})_* (c_i M_{L/L_\pm})$ is its composition with $\Tr_{L_\pm/F}$, which yields an $F$-quadratic form on $L$.

  We claim that $\det q_1 = \det q_2$. Indeed, set $t := c_2 c_1^{-1}$ and write
  \begin{align*}
    c_1 M_{L/L_\pm} & = \angles{a, b}, \quad a,b \in L_\pm^\times ;\\
    c_2 M_{L/L_\pm} & = \angles{ta, tb}.
  \end{align*}
  Then we have
  \begin{align*}
    (V_1, q_1) & \simeq (\Tr_{L_\pm/F})_* \angles{a} \oplus (\Tr_{L_\pm/F})_* \angles{b}, \\
    (V_2, q_2) & \simeq (\Tr_{L_\pm/F})_* \angles{ta} \oplus (\Tr_{L_\pm/F})_* \angles{tb}.
  \end{align*}

  A straightforward calculation (cf.\ \cite[p.668]{Se84}) shows that
  $$ \det ((\Tr_{L_\pm/F})_* \angles{ta}) = N_{L_\pm/F}(t) \det((\Tr_{L_\pm/F})_* \angles{a}) $$
  and similarly for $\det((\Tr_{L_\pm/F})_* \angles{tb})$. Hence $\det q_2 = N_{L_\pm/L}(t)^2 \det q_1 = \det q_1$.

  It follows that $\SO(V_1,q_1)$ and $\SO(V_2, q_2)$ are inner forms (in fact, pure inner forms) of each other. By the uniqueness of quasisplit inner forms, we conclude that $\SO(V_1,q_1) \simeq \SO(V_2, q_2)$. 
\end{proof}

\subsection{Geometric transfer factors of $G=\SO(V,q)$}\label{sec:Delta-formula1}
\paragraph{Splittings and transfer factors}
Recall that we have fixed a $\theta$-stable Borel pair $(B,T)$. Let $\Delta(B,T)$ denote the set of simple roots over $\bar{F}$ associated to $(B,T)$. A splitting of $\GL(2n)$ adapted to $(B,T)$ is a datum $(B,T, (E_\alpha)_{\alpha \in \Delta(B,T)})$, where $E_\alpha \in \gl(2n, \bar{F})$ lies in the root subspace of $\alpha$ and $E_\alpha \neq 0$. A splitting $(B,T,(E_\alpha)_\alpha)$ is called an $F$-splitting if it is $\Gamma_F$-invariant. The involution $\theta$ also acts on $F$-splittings adapted to $(B,T)$ since $(B,T)$ is $\theta$-stable.

Iterating the definition above to the Borel pair $(B^\theta, T^\theta) := (B \cap \GL(2n)_{\tilde{\theta}}, T \cap \GL(2n)_{\tilde{\theta}})$ for $\GL(2n)_{\tilde{\theta}}$, we can also define the $(B^\theta, T^\theta)$-adapted $F$-splittings $(B^\theta, T^\theta, (E_\beta)_{\beta \in \Delta(B^\theta,T^\theta)})$ of $\GL(2n)_{\tilde{\theta}}$.

In fact, splittings can be defined for all quasisplit connected reductive groups.

\begin{proposition}\label{prop:res-splitting}
  The restriction map $X^*(T) \to X^*(T^\theta)$ induces a $\Gamma_F$-equivariant bijection from the set of $\theta$-orbits of $\Delta(B,T)$ to $\Delta(B^\theta, T^\theta)$. Let $(B,T,(E_\alpha)_\alpha)$ be a $\theta$-stable splitting of $\GL(2n)$. To each $\beta \in \Delta(B^\theta, T^\theta)$ corresponding to a $\theta$-orbit $\mathcal{O}$ in $\Delta(B,T)$, put
  $$ E_\beta := \sum_{\alpha \in \mathcal{O}} E_\alpha. $$
  Then $(B,T,(E_\alpha)_\alpha) \mapsto (B^\theta, T^\theta, (E_\beta)_\beta)$ is a $\Gamma_F$-equivariant bijection from the set of $\theta$-stable $(B,T)$-adapted splittings to the set of splittings of $(B^\theta, T^\theta)$-adapted splittings. 
\end{proposition}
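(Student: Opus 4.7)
The plan is to invoke Steinberg's structural theory for the fixed-point subgroup of a quasi-semisimple involution on a connected reductive group. First I would observe that because $(B,T)$ is $\theta$-stable and $\GL(2n)$ has simply connected derived group, the fixed-point group $\GL(2n)_{\tilde{\theta}}$ is connected and $(B^\theta, T^\theta)$ is a Borel pair of it. The restriction map $X^*(T) \to X^*(T^\theta)$ is $\Gamma_F$-equivariant by construction, and since $\theta$ acts trivially on $T^\theta$, it factors through the $\theta$-coinvariants, hence in particular through $\theta$-orbits.

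For the first assertion, I would then verify that each simple root $\alpha \in \Delta(B,T)$ restricts to a simple root of $(B^\theta, T^\theta)$, and that two simple roots restrict to the same character if and only if they lie in the same $\theta$-orbit. This is a special case of Steinberg's folding theorem (cf.\ \cite[\S 1.3]{KS}); in our setting $G$ is of type $A_{2n-1}$ and $\theta$ acts on the Dynkin diagram by the nontrivial flip, so one may check directly that the $2n-1$ nodes decompose into $n-1$ orbits of size two together with the single central $\theta$-fixed node, totalling $n$ orbits, which match the $n$ simple roots of the Dynkin diagram of type $C_n$ attached to $\Sp(2n)$.

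For the second assertion, given a $\theta$-stable splitting $(B,T,(E_\alpha)_\alpha)$, one has $\theta(E_\alpha) = E_{\theta\alpha}$ by $\theta$-stability, so $E_\beta := \sum_{\alpha \in \mathcal{O}} E_\alpha$ is $\theta$-fixed and hence lies in $\gl(2n)_{\tilde{\theta}}$. For $t \in T^\theta$, $\Ad_t(E_\alpha) = \alpha(t) E_\alpha = \beta(t) E_\alpha$ since every $\alpha \in \mathcal{O}$ restricts to $\beta$, so $E_\beta$ is an eigenvector for $T^\theta$ of weight $\beta$; non-vanishing follows from the linear independence of the distinct root subspaces of $(G,T)$. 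An inverse to the assignment $(E_\alpha)_\alpha \mapsto (E_\beta)_\beta$ is obtained by projecting each $E_\beta$ onto the various root subspaces of $\gl(2n)$, which recovers one $E_\alpha$ for each $\alpha$ in the orbit of $\beta$; these two constructions are mutually inverse. Finally, $\Gamma_F$-equivariance is immediate: $\theta$ is $F$-rational (being conjugation by $\tilde{\theta} \in \GL(2n,F)$), so $\Gamma_F$ commutes with $\theta$, permutes $\theta$-orbits as sets, and preserves the defining formulae.

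The hardest step will be the first one, namely that restriction induces the claimed bijection between $\theta$-orbits of simple roots and $\Delta(B^\theta, T^\theta)$; this relies on Steinberg's theory or an explicit combinatorial verification for $A_{2n-1} \rightsquigarrow C_n$. Once in place, the bijection of splittings and the $\Gamma_F$-equivariance are essentially formal consequences.
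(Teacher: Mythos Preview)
Your proposal is correct and follows the same underlying route as the paper: the paper's proof consists entirely of two citations to Kottwitz--Shelstad (\S1.3 for the first assertion, p.~61 for the second), and your argument is precisely an unpacking of that material --- Steinberg's theory for the bijection on simple roots, together with the formal verification that $E_\beta := \sum_{\alpha \in \mathcal{O}} E_\alpha$ lands in the correct root space and that the assignment is invertible and $\Gamma_F$-equivariant. One small point worth tightening in your inverse construction: you should note that for $A_{2n-1}$ the simple roots in a $\theta$-orbit are mutually orthogonal, so the $\beta$-root space of $\gl(2n)_{\tilde\theta}$ really is $(\bigoplus_{\alpha\in\mathcal{O}}\mathfrak g_\alpha)^\theta$ and the projection recovers exactly the $E_\alpha$ with $\alpha\in\mathcal{O}$; this is implicit in your $A_{2n-1}\rightsquigarrow C_n$ remark but deserves to be said.
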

\begin{proof}
  The first part is contained in \cite[\S 1.3]{KS}. For the second part, see \cite[p.61]{KS}.
\end{proof}

Let $(n_O, n_S, \chi) \in \mathcal{E}_\text{ell}(2n)$ with endoscopic group $G$. Following the recipe in \cite{Wa10}, one can define a transfer factor
$$ \Delta: G_{\text{reg}}(F) \times \tGL_\text{reg}(F) \to \C $$
such that
\begin{itemize}
  \item $\Delta(\gamma,\delta) \neq 0$ if and only if $\gamma \leftrightarrow \delta$,
  \item $\Delta(\gamma,\delta)$ depends only on the conjugacy class of $\delta$ and the stable conjugacy class of $\gamma$, taken up to $\Out_{2n}(G)$.
\end{itemize}

The transfer factor also depends on the choice of an $F$-splitting $(B^\theta, T^\theta, (E_\beta)_\beta)$, or equivalently a $\theta$-stable $F$-splitting $(B,T, (E_\alpha)_\alpha)$ by Proposition \ref{prop:res-splitting}.

As we prefer to use normalized orbital integrals, our transfer factor does not contain the term $\Delta_{IV}$ in \cite{KS}.

\paragraph{Whittaker normalization}
Let $(B,T, (E_\alpha)_{\alpha \in \Delta(B,T)})$ be a $\theta$-stable $F$-splitting of $\GL(2n)$. Let $U$ be the unipotent radical of $B$. Define the algebraic homomorphism
$$\lambda_0: U \to \Ga , $$
using $(E_\alpha)_\alpha$, as in \cite[\S 3.1]{Sh10}. Recall the fixed additive character $\psi_F$ and set $\lambda := \psi_F \circ \lambda_0$ on $F$-points. Then $\lambda: U(F) \to \C^\times$ is a $\theta$-stable generic character. Conversely, every $\theta$-stable generic character of $U(F)$ arises in this way. By a $\theta$-stable Whittaker datum of $\GL(2n)$, we mean a pair $(B,\lambda)$ thus obtained.

\begin{definition}[{\cite[p.55]{ArEndo} and \cite[\S 5.3]{KS}}]\label{def:Whittaker-normalization}
  Let $(B,\lambda)$ be a $\theta$-stable Whittaker datum of $\GL(2n)$ arising from the $\theta$-stable $F$-splitting $(B,T, (E_\alpha)_{\alpha \in \Delta(B,T)})$, we define the Whittaker-normalized transfer factor for an elliptic endoscopic datum $(n_O, n_S, \chi) \in \mathcal{E}_\text{ell}(2n)$ by
  $$ \Delta_\lambda := \varepsilon\left(\frac{1}{2}, \chi, \psi_F\right)^{-1} \Delta ,$$
  where $\Delta$ is the transfer factor relative to $(B,T, (E_\alpha)_{\alpha \in \Delta(B,T)})$.
\end{definition}
It is shown in \cite[pp.65-66]{KS} that $\Delta_\lambda$ only depends on $(B,\lambda)$.

\begin{remark}\label{rem:epsilon-Weil}
  The $\varepsilon$-factor can be expressed in terms of the Weil index, as follows. If $\chi$ is trivial, set $K= F \times F$ as an $F$-algebra, otherwise $K/F$ is the quadratic extension attached to $\chi$ by local class field theory. We have
  $$ \varepsilon\left(\frac{1}{2}, \chi, \psi_F\right) = \gamma_{\psi_F}(N_{K/F}) $$
  where $N_{K/F}$ is the $F$-quadratic form $v \mapsto N_{K/F}(v)$ on $K$. We have $N_{K/F} \simeq \Hy$ if and only if $K=F \times F$.
\end{remark}

In order to use Waldspurger's formula for $\Delta$, we have to choose splittings, define and then calculate the constants $\eta, \eta_{(V,q)} \in F^\times$ in \cite[\S 1.6]{Wa10}.

\paragraph{A regular nilpotent element in $\gl(2n,F)_{\tilde{\theta}}$}
Let $(B^\theta, T^\theta, (E_\beta)_{\beta \in \Delta(B^\theta,T^\theta)})$ be an $F$-splitting of $\GL(2n)_{\tilde{\theta}}$. Set
$$ N := \sum_{\beta \in \Delta(B^\theta,T^\theta)} E_\beta . $$
Then $N$ is a regular nilpotent element in the symplectic Lie algebra $\gl(2n,F)_{\tilde{\theta}}$. Consider the symmetric bilinear form on $F^{2n}$:
$$ (v,v') \mapsto \tilde{\theta}(v|N^{2n-1}v'). $$
It is equivalent to $(\text{null form}) \oplus \angles{\eta}$ with $\eta \in F^\times/F^{\times 2}$. This is what we aim to calculate.

The construction above only depends on the $\GL(2n,F)_{\theta}$-conjugacy class of $N$. By \cite[Lemma 5.1A]{LS1} every $\GL(2n,F)_{\theta}$-conjugacy class of regular nilpotent elements in $\gl(2n,F)_{\tilde{\theta}}$ contains an element of the form $N$, for some choice of $(E_\beta)_\beta$. Therefore we can forget the $F$-splittings and concentrate on regular nilpotent elements.

Define an endomorphism $N: F^{2n} \to F^{2n}$ by
\begin{align*}
  N e_1 & = 0, \\
  N e_i & = e_{i-1}, \quad  1 < i \leq n, \\
  N e_{-n} & = e_n , \\
  N e_i & = e_{i-1}, \quad -n < i \leq -1. 
\end{align*}
One can check that $N \in \gl(2n,F)_{\tilde{\theta}}$. Moreover, $N$ is regular nilpotent and
$$ N^{2n-1}e_i =
  \begin{cases}
    e_1, & \text{if } i=-1, \\
    0, & \text{otherwise}.
  \end{cases}$$
Hence $\tilde{\theta}(e_{-1}|N^{2n-1}e_{-1}) = \tilde{\theta}(e_{-1}|e_1)=1$. Summing up:

\begin{lemma}\label{prop:eta-sp}
  Suppose the $F$-splitting $(B^\theta, T^\theta, (E_\beta)_{\beta \in \Delta(B^\theta,T^\theta)})$ is chosen so that the associated regular nilpotent element is conjugate to the $N$ defined above. Then $\eta=1$.
\end{lemma}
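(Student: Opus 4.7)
The plan is a direct computation using the explicit basis. Tracing the defining formulas for $N$, its action on the basis vectors forms a single chain
$$ e_{-1} \longrightarrow e_{-2} \longrightarrow \cdots \longrightarrow e_{-n} \longrightarrow e_n \longrightarrow e_{n-1} \longrightarrow \cdots \longrightarrow e_1 \longrightarrow 0 $$
of length $2n$. This simultaneously confirms that $N$ is regular nilpotent (one Jordan block of size $2n$) and pins down $N^{2n-1}$ completely: it annihilates every basis vector except $e_{-1}$, which is sent to $e_1$. In particular $N^{2n-1}$ has rank $1$, so the associated bilinear form $B(v,v') := \tilde{\theta}(v|N^{2n-1}v')$ also has rank $1$.

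Next I would check that $B$ is symmetric, which is a sanity check rather than a real obstacle. Since $N \in \gl(2n,F)_{\tilde{\theta}}$ one has $\tilde{\theta}(Nv|v') = -\tilde{\theta}(v|Nv')$, and iterating yields $\tilde{\theta}(N^k v|v') = (-1)^k \tilde{\theta}(v|N^k v')$. Combined with the antisymmetry of $\tilde{\theta}$ this gives, for $k=2n-1$,
$$ B(v',v) = \tilde{\theta}(v'|N^{2n-1}v) = -\tilde{\theta}(N^{2n-1}v|v') = (-1)^{2n}\tilde{\theta}(v|N^{2n-1}v') = B(v,v'). $$
To identify the class $\eta \in F^\times/F^{\times 2}$, it now suffices to exhibit a single vector on which $B$ is nonzero. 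Taking $v=v'=e_{-1}$ gives $B(e_{-1},e_{-1}) = \tilde{\theta}(e_{-1}|e_1)$, and the defining formula evaluates this as $(-1)^{-1+1}\delta_{-1,-1} = 1$.

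Hence $B$ is equivalent to $(\text{null form}) \oplus \angles{1}$, and $\eta = 1$. The only potentially delicate point is already handled in the excerpt by the invocation of \cite[Lemma 5.1A]{LS1}: the class of $\eta$ depends only on the $\GL(2n,F)_\theta$-conjugacy class of a regular nilpotent element in $\gl(2n,F)_{\tilde{\theta}}$, which legitimizes replacing the nilpotent arising from an abstract $F$-splitting $(E_\beta)_\beta$ with this particular $N$. Beyond this there is no substantive obstacle; the argument is bookkeeping with the basis.
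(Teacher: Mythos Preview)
Your proof is correct and follows essentially the same approach as the paper: both compute $N^{2n-1}$ directly from the chain action on basis vectors and evaluate $\tilde{\theta}(e_{-1}|e_1)=1$, with the reduction to this particular $N$ justified by \cite[Lemma 5.1A]{LS1}. Your explicit symmetry check for $B$ is a harmless extra sanity step that the paper omits.
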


Our choice of $N$ is compatible with the $\theta$-stable Whittaker datum $(B(2n),\lambda(2n))$ of $\GL(n)$ chosen in \cite[p.55]{ArEndo}.

\paragraph{A regular nilpotent element in $\so(V,q)$}
In this paragraph we consider an $F$-quadratic space $(V,q)$ of dimension $2n$ with quasisplit $\SO(V,q)$.

For $n=1$, define $\eta_{(V,q)} \in F^\times/F^{\times 2}$ to be any class represented by $q$.

Henceforth we assume $n>1$. Let $(B',T', (E_\alpha)_{\alpha \in \Delta(B',T')})$ be an $F$-splitting of $\SO(V,q)$. Set
$$ N := \sum_{\alpha \in \Delta(B',T')} E_\alpha $$
as usual, then $N$ is a regular nilpotent element in $\so(V,q)$. Consider the symmetric bilinear form on $V$:
$$ (v,v') \mapsto q(v|N^{2n-2}v'). $$
As in the previous case, it is equivalent to $\angles{\eta_{(V,q)}}$ for some $\eta_{(V,q)} \in F^\times/F^{\times 2}$ modulo null forms. This defines $\eta_{(V,q)}$. As before, we forget $F$-splittings and just consider any regular nilpotent $N \in \so(V,q)$.

To study $\eta_{(V,q)}$, the first step is to reduce to the split odd orthogonal case. Set $m := n-1$ and write
\begin{gather}\label{eqn:ani-kernel}
  (V,q) \simeq m\Hy \oplus (V', q')
\end{gather}
where $(V',q')$ is a $2$-dimensional $F$-quadratic space, uniquely determined by Witt's cancellation theorem. In fact, $(V',q') \simeq \Hy$ if $\SO(V,q)$ is split, otherwise it is the anisotropic kernel of $(V,q)$.

Let $y \in F^\times$ be any element represented by $q'$. There exists a unique $y' \in F^\times/F^{\times 2}$ such that $(V',q') \simeq \angles{y,y'}$. We set $(V^\flat, q^\flat) := m\Hy \oplus \angles{y}$ so that
$$ (V,q) \simeq (V^\flat, q^\flat) \oplus \angles{y'}, $$
and $\SO(V^\flat, q^\flat)$ is a split odd orthogonal group.

There is only one regular nilpotent class in $\so(V^\flat,q^\flat)$; take $N$ to be any element therein. Upon extension by zero, it is regarded as a nilpotent element in $\so(V,q)$. By \cite[\S 1]{Wa10-GP} we know $N$ is regular nilpotent in $\so(V,q)$, and any regular nilpotent element arises in this manner.

Now begins the construction of $N$. Choose a basis $e_{\pm 1}, \ldots, e_{\pm m}$ of $m\Hy$ such that
$$ q^\flat(e_i|e_j) = \delta_{i,-j}, \quad -m \leq i,j \leq m .$$

Choose $v$ in the underlying space of $\angles{y}$ such that $q^\flat(v)=y$. Set
\begin{align*}
  N e_i & = e_{i+1}, \quad 1 \leq i < m, \\
  N e_m & =v, \\
  N v & = -y e_{-m}, \\
  N e_i & = -e_{i+1}, \quad -m \leq i < -1, \\
  N e_{-1} & = 0.
\end{align*}
One checks that $N$ is regular nilpotent in $\so(V^\flat,q^\flat)$. A simple calculation yields $N^{2m} e_1 = (-1)^m y e_{-1}$ and $N^{2m}x=0$ for every other element $x$ in the basis of $V^\flat$. Hence
\begin{gather}
  \eta_{(V,q)} := q^\flat(e_1|N^{2n-2}e_1) = (-1)^{n-1} y \cdot q^\flat(e_1|e_{-1}) = (-1)^{n-1} y.
\end{gather}

\begin{lemma}\label{prop:eta-so}
  With the notations above, $\eta_{(V,q)}$ can be $(-1)^{n-1}$ times any nonzero element represented by $(V',q')$.
\end{lemma}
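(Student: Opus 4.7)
My plan is to verify that the explicit construction preceding the statement already realizes every value of the form $(-1)^{n-1} y$ with $y$ represented by $q'$. The key observation is that the construction of the regular nilpotent $N$ depended freely on a choice of $y \in F^\times$ represented by $q'$, and any such choice yields a valid decomposition and hence a valid $N$.

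Concretely, I would fix an arbitrary $y \in F^\times$ represented by $q'$. Since $q'$ is non-degenerate of dimension two, any vector $e$ with $q'(e) = y$ extends to an orthogonal basis, giving $(V', q') \simeq \angles{y, y'}$ for some $y' \in F^\times$; combined with \eqref{eqn:ani-kernel}, this yields the orthogonal decomposition $(V, q) \simeq (V^\flat, q^\flat) \oplus \angles{y'}$ with $(V^\flat, q^\flat) = m\Hy \oplus \angles{y}$, exactly as in the construction above. The nilpotent $N$ built in $\so(V^\flat, q^\flat)$ then extends by zero to an element of $\so(V, q)$ and remains regular nilpotent by \cite[\S 1]{Wa10-GP}.

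The remaining work, essentially already performed in the text, is the computation of $N^{2m}$ on the chosen basis: it annihilates every basis vector except $e_1$, and $N^{2m} e_1 = (-1)^m y e_{-1}$. Consequently the symmetric bilinear form $(v, v') \mapsto q(v | N^{2n-2} v')$ has a single nonzero entry, namely $(-1)^{n-1} y$ at $(e_1, e_1)$, and is therefore equivalent modulo null forms to $\angles{(-1)^{n-1} y}$. This forces $\eta_{(V,q)} = (-1)^{n-1} y$ in $F^\times/F^{\times 2}$, and the lemma follows by letting $y$ range over all nonzero elements represented by $q'$. The only delicate point I foresee is the regularity of the extended-by-zero $N$ in $\so(V, q)$, which is precisely the content of the classification cited from \cite{Wa10-GP}; once this is granted, the rest is a mechanical bookkeeping of the parameter $y$.
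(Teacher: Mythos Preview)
Your approach is correct and is essentially identical to the paper's: the paper's proof for $n>1$ simply points back to the construction you have recapitulated, noting that the free parameter $y$ already ranges over all nonzero values represented by $q'$. The only omission is the case $n=1$, where the construction of $N$ does not apply (since $m=0$); the paper handles this separately by observing that $(V',q')=(V,q)$ and $\eta_{(V,q)}$ is \emph{defined} to be any class represented by $q$, so the claim holds trivially with $(-1)^{n-1}=1$.
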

\begin{proof}
  This has just been done for $n>1$. When $n=1$, we have $(V',q')=(V,q)$ and the assertion is true by definition.
\end{proof}

\paragraph{The formula for $G=\SO(V,q)$}
The crucial tool in this article is the following formula of Waldspurger. Let $(V,q)$ be an $F$-quadratic space of dimension $2n$ such that $\SO(V,q)$ is quasisplit. We associate the simple endoscopic datum $(2n,0,\chi)$ of $\tGL(2n)$ to $\SO(V,q)$ as in Remark \ref{rem:bijections}. Fix the $F$-splitting of $\GL(2n)_{\tilde{\theta}}$ chosen in Lemma \ref{prop:eta-sp}. Hence one can unambiguously talk about correspondence of regular semisimple classes and the transfer factor. 

\begin{theorem}\label{prop:wald-evenso}
  Let $\delta \in \tGL(2n)$, regarded as a non-degenerate bilinear form on $F^{2n}$, and $\gamma \in \SO(V,q)$ such that $\gamma$ is very regular and $\gamma \leftrightarrow \delta$. Define a quadratic form $q_\delta$ on $F^{2n}$ by
  $$ q_\delta := \frac{1}{2} ( \delta + {}^t \delta ) = \left[ (v,v') \mapsto  \frac{1}{2} (\delta(v|v') + \delta(v'|v)) \right]. $$

  If there exists an $F$-splitting for $\SO(V,q)$ such that $\eta_{(V,q)}=-1$, then
  $$ \Delta(\gamma,\delta) = \begin{cases}
    1, & \text{if } (F^{2n}, q_\delta) \simeq (V,q), \\
    -1, & \text{otherwise}.
  \end{cases}$$
\end{theorem}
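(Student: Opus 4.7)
My plan is to deduce the formula directly from Waldspurger's general expression for twisted transfer factors in \cite{Wa10}, after translating both sides through the parametrizations collected in \S\ref{sec:classes}. First I would identify $q_\delta$ in terms of the parameter of $\delta$. Writing $\delta(v|v') = \Tr_{L/F}(\tau(v)v'x)$ for $(L,L_\pm,x)$, Remark \ref{rem:transposed-form} shows ${}^t\delta$ corresponds to $(L,L_\pm,\tau(x))$, so
$$ q_\delta(v|v') = \tfrac{1}{2}\Tr_{L/F}(\tau(v)v'(x+\tau(x))) = \Tr_{L/F}(\tau(v)v'c), \quad c := \tfrac{x+\tau(x)}{2} \in L_\pm. $$
Very-regularity of $\delta$ forces $x/\tau(x)+1 \in L^\times$, hence $c \in L_\pm^\times$, so $q_\delta$ is non-degenerate.

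On the endoscopic side, the correspondence $\gamma \leftrightarrow \delta$ forces the $\Or(V,q)$-class of $\gamma$ to be parametrized by $(L,L_\pm,y,c_0)$ with $y = -x\tau(x)^{-1}$ and some $c_0 \in L_\pm^\times$ (determined modulo $N_{L/L_\pm}(L^\times)$) such that $(L,q_{c_0}) \simeq (V,q)$. Thus $q_\delta = q_c$ and $q_{c_0}$ are two $F$-quadratic forms on $L$ of dimension $2n$; the discriminant computation recalled in the proof of Lemma \ref{prop:separation} shows they have equal discriminants, so they are isomorphic over $F$ if and only if their Hasse invariants agree, equivalently if and only if $\gamma_{\psi_F}(q_c) = \gamma_{\psi_F}(q_{c_0})$.

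Next I would feed these data into Waldspurger's explicit formula for $\Delta(\gamma,\delta)$. That formula, adapted to the quasisplit even orthogonal case with $n_S = 0$, expresses the transfer factor as a product of Weil indices attached to quadratic forms built out of $(L,L_\pm,x)$, $(L,L_\pm,c_0)$ together with the splitting invariants $\eta$ of $\gl(2n)_{\tilde{\theta}}$ and $\eta_{(V,q)}$ of $\so(V,q)$. Using Lemma \ref{prop:eta-sp} to fix $\eta = 1$ and the hypothesis $\eta_{(V,q)} = -1$, all the normalization terms collapse, and the surviving factor reduces to the single Weil-index ratio $\gamma_{\psi_F}(q_c)/\gamma_{\psi_F}(q_{c_0})$, i.e., to the comparison of Hasse invariants of $(F^{2n},q_\delta)$ and $(V,q)$. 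This ratio is $+1$ when the two quadratic spaces are isomorphic and $-1$ otherwise, yielding the stated dichotomy.

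The main obstacle is the bookkeeping needed to unpack Waldspurger's formula and to verify that with $\eta = 1$ and $\eta_{(V,q)} = -1$ all auxiliary contributions (product of local Weil indices over the $\tau$-stable factors of $L$, transition terms between the splittings of $\GL(2n)_{\tilde{\theta}}$ and $\SO(V,q)$, and signs coming from regular nilpotent conjugation as in \cite[Lemma 5.1A]{LS1}) genuinely cancel; the specific normalizations were engineered precisely for this cancellation, so the work is primarily one of tracking terms through the formula. Once the formula reduces to the Weil-index comparison, Witt's theorem and the equality of discriminants finish the argument.
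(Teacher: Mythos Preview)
Your proposal is correct and follows essentially the same route as the paper: both derive the result directly from Waldspurger's explicit transfer-factor formula in \cite{Wa10}, using Lemma \ref{prop:eta-sp} to fix $\eta=1$ and the hypothesis $\eta_{(V,q)}=-1$. The paper's proof is a one-line citation of \cite[\S 1.11 (2)]{Wa10} (with a remark that a factor $\tfrac{1}{2}$ is missing there), whereas you sketch how the formula unpacks into a Weil-index comparison of $q_\delta$ and $(V,q)$; the substance is the same.
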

\begin{proof}
  In view of Lemma \ref{prop:eta-sp}, this is an immediate consequence of \cite[\S 1.11 (2)]{Wa10}. The factor $\frac{1}{2}$ is missing in \cite{Wa10}.
\end{proof}

Note that the non-degeneracy of $q_\delta$ follows from the fact that $\delta$ is very regular; see Definition \ref{def:correspondence} and Remark \ref{rem:transposed-form}.

To get rid of the dependence on $(V,q)$ and the $F$-splitting of $\SO(V,q)$, we write $(V,q) \simeq (n-1)\Hy \oplus (V',q')$ as in \eqref{eqn:ani-kernel}. Write
$$ (V',q') = c N_{K/F} $$
where
\begin{itemize}
  \item $c \in F^\times/F^{\times 2}$;
  \item $K$ is either the étale $F$-algebra $F \times F$, or a quadratic extension of $F$; in either case, $\tau$ denotes the nontrivial element in $\Aut_F(K)$ and $N_{K/F}$ is the norm form on $K$, characterized by
  $$ v \mapsto N_{K/F}(v) = v\tau(v), \quad v \in K. $$
\end{itemize}
Note that $K$ is uniquely determined by $\SO(V,q)$. When $K=F \times F$, we have $N_{K/F} \simeq cN_{K/F} \simeq \Hy$.

Let $(V_1,q_1)$, $(V_2,q_2)$ be two $F$-quadratic spaces. We write $(V_1,q_1) \stackrel{\text{Witt}}{\sim} (V_2,q_2)$ if they have the same image in the Witt group over $F$.

\begin{corollary}\label{prop:Delta-formula-gen}
  For $\gamma \leftrightarrow \delta$ as before, we have
  $$\Delta(\gamma,\delta) = \begin{cases}
    1, & \text{if } (F^{2n},q_\delta) \stackrel{\mathrm{Witt}}{\sim} (-1)^n N_{K/F}, \\
    -1, & \text{otherwise}.
  \end{cases}$$
\end{corollary}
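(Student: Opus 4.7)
The plan is to apply Theorem \ref{prop:wald-evenso} with a carefully chosen representative of the endoscopic group. Since the simple endoscopic datum is determined by $K$ alone (Remark \ref{rem:bijections}), and the transfer factor depends only on the endoscopic datum (up to the fixed $F$-splitting of $\GL(2n)_{\tilde\theta}$), I would first replace $(V,q)$ by the normalized model
$$ (V_0, q_0) := (n-1)\Hy \oplus (-1)^n N_{K/F}, $$
transporting $\gamma$ across the isomorphism $\SO(V,q) \simeq \SO(V_0, q_0)$ to $\gamma_0 \in \SO(V_0, q_0)$. The correspondence $\gamma_0 \leftrightarrow \delta$ is preserved, since it is formulated intrinsically in terms of the étale algebra parameter $(L, L_\pm, y)$ attached to $\gamma$ (Definition \ref{def:correspondence}).

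For this model, the anisotropic kernel is $(V_0', q_0') = (-1)^n N_{K/F}$, which represents $(-1)^n$ (take $v = 1 \in K$). Lemma \ref{prop:eta-so} then furnishes an $F$-splitting of $\SO(V_0, q_0)$ with
$$ \eta_{(V_0, q_0)} = (-1)^{n-1}\cdot(-1)^n = -1, $$
activating the hypothesis of Theorem \ref{prop:wald-evenso}. That theorem yields $\Delta(\gamma, \delta) = 1$ if and only if $(F^{2n}, q_\delta) \simeq (V_0, q_0)$. Since both sides are non-degenerate $F$-quadratic forms of dimension $2n$, Witt's theorem turns this isomorphism into equality of classes in the Witt group of $F$; and $(V_0, q_0) \stackrel{\mathrm{Witt}}{\sim} (-1)^n N_{K/F}$ by construction. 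The formula in the corollary follows.

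The main obstacle is justifying the first step, namely that the switch from $(V,q)$ to $(V_0, q_0)$ leaves both the transfer factor and the correspondence unchanged. This rests on the fact that simple elliptic data of the form $(2n, 0, \chi)$ are classified by $K$ alone, so Waldspurger's recipe for $\Delta$ is a function of $K$, not of the particular quadratic space chosen to realize the endoscopic group; any model in the similarity class with the correct anisotropic kernel works equally well. Once this is granted, the reduction to $(V_0, q_0)$ is a genuine simplification: it trades a condition involving $(V,q)$ and a splitting of $\SO(V,q)$ for a condition depending only on $K$ and $q_\delta$.
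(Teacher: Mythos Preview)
Your proof is correct and follows essentially the same strategy as the paper: exploit the freedom in the quadratic-form representative of the endoscopic group so that the hypothesis $\eta_{(V,q)}=-1$ of Theorem \ref{prop:wald-evenso} is met, then read off the Witt-class condition. The paper rescales the given form to $(V,tq)$ with $t:=-\eta_{(V,q)}$ and then computes $(V,tq)\stackrel{\mathrm{Witt}}{\sim}(-1)^n N_{K/F}$; you instead pick the target model $(V_0,q_0)=(n-1)\Hy\oplus(-1)^n N_{K/F}$ directly and verify $\eta_{(V_0,q_0)}=-1$. These are two phrasings of the same move: your $(V_0,q_0)$ is isometric to $(V,tq)$ for a suitable $t$, since both have dimension $2n$ and Witt class $(-1)^n N_{K/F}$.

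The one place where the paper's presentation is slightly tighter is your ``main obstacle.'' Because $\SO(V,q)=\SO(V,tq)$ literally (scaling does not change the isometry group), the paper never needs to transport $\gamma$ through an abstract isomorphism or invoke that $\Delta$ depends only on the endoscopic datum; the element $\gamma$ and the group are unchanged, and only the label on the quadratic form is different. Your justification via Remark \ref{rem:bijections} is valid, but it would be cleaner to observe that $(V_0,q_0)$ is a scalar multiple of $(V,q)$ after identifying underlying spaces, so that your $\gamma_0$ is just $\gamma$ and no invariance argument is needed.
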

\begin{proof}
  For any $F$-splitting of $\SO(V,q)$ with associated $\eta_{(V,q)}$, we take $t := -\eta_{(V,q)}$. Using the same $F$-splitting, we get
  $$ \eta_{(V,tq)} = t\eta_{(V,q)}=-1.$$
  Thus Theorem \ref{prop:wald-evenso} is applicable. It remains to interpret the condition $q_\delta \simeq (V,tq)$. As $(V,q) \stackrel{\text{Witt}}{\sim} cN_{K/F} = (V',q')$, the elements in $F^\times$ represented by $(V',q')$ is $cN_{K/F}(K^\times)$ (this holds trivially when $K=F \times F$). Hence for any choice of $F$-splitting of $\SO(V,q)$, we have
  \begin{align*}
    (V,tq) \stackrel{\text{Witt}}{\sim} tcN_{K/F} & = -(-1)^{n-1} c^2 N_{K/F}(K^\times) \cdot N_{K/F} \\
    & = (-1)^n N_{K/F}
  \end{align*}
  by using Lemma \ref{prop:eta-so}.

  Now Witt's theorems imply that $q_\delta \simeq (V,tq)$ if and only if $q_\delta \stackrel{\text{Witt}}{\sim} (-1)^n N_{K/F}$, as required.
\end{proof}

\subsection{Geometric transfer}\label{sec:geom-trans}
The constructions below are due to Arthur \cite{Ar96,ArEndo}.
\paragraph{The twisted side}
Define
\begin{align*}
  \Gamma_\text{reg}(\tGL(2n)) & := \{ \text{strongly regular semisimple conjugacy classes in } \tGL(2n,F) \}, \\
  \Gamma_\text{reg,ell}(\tGL(2n)) & := \{ \delta \in \Gamma_\text{reg}(\tGL(2n,F)) : \delta \text{ is elliptic} \}.
\end{align*}

Choose Haar measures on $\GL(2n,F)$ and on the centralizers $\tGL(2n,F)_\delta$ for each $\delta \in \Gamma_\text{reg}(\tGL(2n))$, so that the orbital integrals are well-defined. Set $C_0^\infty(\tGL(2n,F))$ to be the vector space of functions $f \in C_c^\infty(\tGL(2n,F))$ such that $I^{\tGL(2n)}(\delta,f)=0$ for all $\delta \in \Gamma_\text{reg}(\tGL(2n))$. Define
\begin{gather}
  \mathcal{I}(\tGL(2n)) := C_c^\infty(\tGL(2n,F))/C_0^\infty(\tGL(2n,F)).
\end{gather}
Elements in $\mathcal{I}(\tGL(2n))$ can be seen as as functions on $\Gamma_\text{reg}(\tGL(2n))$: $\delta \mapsto I^{\tGL(2n)}(\delta,f)$, where $f$ is fixed. It can also be viewed as the quotient of $C_c^\infty(\tGL(2n,F))$ by the subspace spanned by functions $f^y - f$, where $f$ ranges over $C_c^\infty(\tGL(2n,F))$ and $f^y(\delta) := f(y\delta y^{-1})$, $y \in \GL(2n,F)$. This follows from the twisted version of the density of semisimple regular orbital integrals. To prove this, it suffices to reduce to the corresponding result on Lie algebras \cite[Lemma 4.1]{HC99} using twisted descent \cite[2.4]{Wa08}; see also \cite[Proposition 4.1.5]{Li11-FT2}.

A function $f \in C_c^\infty(\tGL(2n,F))$ is called cuspidal if $I^{\tGL(2n)}(\delta,f)=0$ for all non-elliptic $\delta \in \Gamma_\text{reg}(\tGL(2n))$. This notion only depends on the image of $f$ in $\mathcal{I}(\tGL(2n))$. Define
\begin{gather}
  \mathcal{I}_\text{cusp}(\tGL(2n)) := \{f \in \mathcal{I}(\tGL(2n)) : f \text{ is cuspidal} \}.
\end{gather}

\paragraph{The endoscopic side}
For any $(n_O,n_S,\chi) \in \mathcal{E}_\text{ell}(2n)$ with endoscopic group $G$, we set
\begin{align*}
  \Gamma_\text{reg}(G) & := \{ \text{very regular semisimple conjugacy classes in } G(F) \},\\
  \Gamma_\text{reg,ell}(G) & := \{ \gamma \in \Gamma_\text{reg}(G) : \gamma \text{ is elliptic} \}, \\
  \Delta_\text{reg}(G) & := \Gamma_\text{reg}(F)/\text{stable conjugacy}, \\
  \Delta_\text{reg,ell}(G) & := \Gamma_\text{reg,ell}(F)/\text{stable conjugacy}.
\end{align*}
Recall that very regular elements have connected centralizer by Proposition \ref{prop:vreg-centralizer}. To define stable orbital integrals, we choose Haar measures on $G(F)$ and on the maximal tori in $G$, such that if $\gamma_1, \gamma_2 \in \Delta_\text{reg}(G)$ are stably conjugate up to $\Out_{2n}(G)$, then the isomorphism $G_{\gamma_1}(F) \simeq G_{\gamma_2}(F)$ preserves Haar measures. Such choices are possible. For $f \in C_c^\infty(G(F))$ and $\gamma \in \Delta_\text{reg}(G)$, we can define the normalized stable orbital integral
$$ S^G(\gamma,f) := \sum_{\gamma'} I^G(\gamma',f) $$
where $\gamma'$ ranges over the conjugacy classes in the stable conjugacy class $\gamma$.

As before, set $C_0^\infty(G(F))$ to be the vector space of functions $f \in C_c^\infty(G(F)))$ such that $I^G(\gamma,f)=0$ for all $\gamma \in \Gamma_\text{reg}(G)$ and define $\mathcal{I}(G) := C_c^\infty(G(F))/C_0^\infty(G(F))$. There is a stable variant, namely set $C_1^\infty(G(F)) \subset C_c^\infty(G(F))$ to be the subspace of those $f$ satisfying $S^G(\gamma,f)=0$ for all $\delta \in \Delta_\text{reg}(G)$; define
\begin{gather}
  S\mathcal{I}(G) := C_c^\infty(G(F))/C_1^\infty(G(F)).
\end{gather}

There is a surjection $\mathcal{I}(G) \twoheadrightarrow S\mathcal{I}(G)$. The algebraic dual of $S\mathcal{I}(G)$, regarded as a space of invariant distributions on $G(F)$, is by definition the space of stable distributions on $G(F)$. Elements in $S\mathcal{I}(G)$ can be seen as functions on $\Delta_\text{reg}(G)$: $\gamma \mapsto S^G(\gamma,f)$; as in the previous case, an element in $S\mathcal{I}(G)$ is called cuspidal if its restriction on non-elliptic stable classes is identically zero. Define
\begin{gather}
  S\mathcal{I}_\text{cusp}(G) := \{f \in S\mathcal{I}(G) : f \text{ is cuspidal} \}.
\end{gather}

The group $\Out_{2n}(G)$ acts on conjugacy classes, therefore one can define
\begin{align*}
  \overline{\Gamma}_\text{reg}(G) & := \Gamma_\text{reg}(G)/\Out_{2n}(G), &
  \overline{\Gamma}_\text{reg,ell}(G) & := \Gamma_\text{reg,ell}(G)/\Out_{2n}(G), \\
  \overline{\Delta}_\text{reg}(G) & := \Delta_\text{reg}(G)/\Out_{2n}(G), &
  \overline{\Delta}_\text{reg,ell}(G) & := \Delta_\text{reg,ell}(G)/\Out_{2n}(G),
\end{align*}
\begin{align*}
  \overline{\mathcal{I}}(G) & := \mathcal{I}(G)^{\Out_{2n}(G)},  \\
  \overline{S\mathcal{I}}(G) & := S\mathcal{I}(G)^{\Out_{2n}(G)}, \\
  \overline{S\mathcal{I}}_\text{cusp}(G) & := S\mathcal{I}_\text{cusp}(G)^{\Out_{2n}(G)}.
\end{align*}
The elements in $\overline{S\mathcal{I}}(G)$ can thus be viewed as functions on $\overline{\Delta}_\text{reg}(G)$.

\paragraph{Compatibility of measures}
Summing up, we have chosen Haar measures for the groups below:
\begin{itemize}
  \item $\GL(2n,F)$, $G(F)$;
  \item $T_\theta(F)$, $\tilde{T}(F)_{/\theta}$, $\tilde{T}_{/\theta}(F)$, for every maximal torus $(T,\tilde{T})$ of $\tGL(2n)$;
  \item $T_G(F)$, for every maximal torus $T_G$ of $G$.
\end{itemize}
Here $G$ ranges over all elliptic endoscopic data of $\tGL(2n)$.

\begin{definition}\label{def:measures}
  The Haar measures above are said to be compatible if the following conditions hold.
  \begin{enumerate}
    \item The Haar measures on $T_\theta(F)$, $\tilde{T}(F)_{/\theta}$, $\tilde{T}_{/\theta}(F)$ are compatible in the sense of \S\ref{sec:orbint}. Moreover, they are compatible with stable conjugacy of maximal tori in $\tGL(2n)$.
    \item The Haar measures on $T_G(F)$ are compatible with stable conjugacy of maximal tori in $G$.
    \item Suppose $\gamma \in \Delta_\text{reg}(G)$, $\delta \in \Gamma_\text{reg}(\tGL(2n))$, $\gamma \leftrightarrow \delta$. Set $T_G := G_\gamma$ and $\tilde{T} := Z_{\tGL(2n)}(\GL(2n)_\delta)$, then $T_\theta = \GL(2n)_\delta$ and twisted endoscopy provides an isomorphism $T_{/\theta} \rightiso T_G$. We assume that the Haar measures on $T_G(F)$ and $T_\theta(F)$ match under these identifications.
  \end{enumerate}
\end{definition}
This is the convention in \cite[3.10]{Wa08}. It differs from that in \cite[5.5]{KS} by a harmless constant.

\paragraph{Langlands-Shelstad-Kottwitz transfer}
Assume that a $\theta$-stable $F$-splitting of $\tGL(2n)$ is chosen so that the associated $F$-splitting of $\GL(2n)_{\tilde{\theta}}$ is as in Lemma \ref{prop:eta-sp}. From this we fabricate a $\theta$-stable Whittaker datum $(B,\lambda)$ and the Whittaker-normalized transfer factor $\Delta_\lambda$ for each $G$ (see Definition \ref{def:Whittaker-normalization}).

\begin{theorem}[Ngô \cite{Ng08}]\label{prop:LSK}
  For every $(n_O,n_S,\chi) \in \mathcal{E}_\mathrm{ell}(2n)$ with endoscopic group $G$, there exists a map
  \begin{align*}
    \mathcal{I}(\tGL(2n)) & \longrightarrow \overline{S\mathcal{I}}(G) \\
    f & \longmapsto f^G,
  \end{align*}
  characterized by the equation
  \begin{gather}\label{eqn:LSK-char-0}
    S^G(\gamma, f^G) = \sum_{\substack{\delta \in \Gamma_{\mathrm{reg}}(\tGL(2n)) \\ \gamma \leftrightarrow \delta}} \Delta_\lambda(\gamma,\delta) I^{\tGL(2n)}(\delta, f), \quad \gamma \in \Delta_\mathrm{reg}(G).
  \end{gather}
\end{theorem}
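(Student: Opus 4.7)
The plan is to deduce the theorem from the twisted Langlands-Shelstad transfer conjecture, which is now established thanks to Ngô's proof of the fundamental lemma \cite{Ng08} combined with Waldspurger's reduction theorems. Uniqueness of $f^G$ modulo $C_1^\infty(G(F))$ is immediate from the density of very regular semisimple stable orbital integrals on $G(F)$, so the task is to produce, for each $f \in C_c^\infty(\tGL(2n,F))$, a function $f^G \in C_c^\infty(G(F))$ whose stable orbital integrals realize the right-hand side of \eqref{eqn:LSK-char-0}. The $\Out_{2n}(G)$-invariance of the resulting class in $S\mathcal{I}(G)$ is automatic, since both $\Delta_\lambda$ and $S^G(\gamma, \cdot)$ are invariant under the $\Out_{2n}(G)$-action on stable classes of $G$.

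The reduction to the fundamental lemma proceeds by localization and descent. Using the Weyl integration formula (Proposition \ref{prop:Weyl-int}) together with the compactness of the support of $f$, one needs only to verify transfer in a sufficiently small invariant neighborhood of each semisimple $\delta_0 \in \tGL(2n,F)$ meeting this support. Twisted descent of orbital integrals, as in \cite[2.4]{Wa08}, identifies this local problem with an analogous transfer between the Lie algebra of the connected centralizer $\GL(2n,F)_{\delta_0}$ and the Lie algebra of a corresponding (now untwisted) endoscopic group over $F$; under this descent, the Whittaker normalization $\Delta_\lambda$ reduces, up to the prescribed $\varepsilon$-factor, to the standard Langlands-Shelstad transfer factor on Lie algebras.

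Once the problem has been put on Lie algebras, Waldspurger's reduction theorem asserts that the transfer of orbital integrals between Lie algebras of reductive $p$-adic groups is a formal consequence of the standard and non-standard fundamental lemmas for Lie algebras of unramified groups. Both were proved by Ngô \cite{Ng08}, the non-standard variant having been reduced to the standard one by Waldspurger beforehand. Combining these ingredients yields the required $f^G$, and hence the transfer map $f \mapsto f^G$.

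At our level the main obstacle is not the deep geometric content but the bookkeeping of normalizations. One must verify that the Whittaker normalization fixed in Definition \ref{def:Whittaker-normalization}, with the specific $F$-splitting selected in Lemma \ref{prop:eta-sp}, is the one compatible with the conventions of Waldspurger and of Kottwitz-Shelstad that enter the reduction argument; and that the compatibility of Haar measures laid down in Definition \ref{def:measures} matches the conventions under which the fundamental lemma is stated and applied. These checks are well documented in the literature cited, and no new ingredient is needed beyond routine reconciliation.
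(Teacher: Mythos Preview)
The paper does not supply a proof of this theorem: it is stated as a deep result from the literature, attributed to Ngô \cite{Ng08}, and used as a black box thereafter. So there is no paper proof to compare against.

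Your sketch is a reasonable outline of the standard chain of reductions by which the twisted Langlands--Shelstad--Kottwitz transfer is now known: Waldspurger's work (notably \cite{Wa08}) reduces the twisted transfer conjecture for $\tGL(N)$ to the ordinary fundamental lemma on Lie algebras, and Ngô \cite{Ng08} supplies the latter. Your remarks on uniqueness (via density of regular semisimple stable orbital integrals) and on $\Out_{2n}(G)$-invariance are correct and match how the paper uses the result. One small imprecision: the phrase ``the non-standard variant having been reduced to the standard one by Waldspurger beforehand'' slightly garbles the logical order, since the non-standard fundamental lemma is a separate ingredient in Waldspurger's program (needed for certain descent situations), and its reduction to Ngô's theorem is also due to Waldspurger but not quite in the way you phrase it. This does not affect the correctness of the overall sketch, which is more than the paper itself provides.
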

Here the orbital integrals are defined using compatible Haar measures in Definition \ref{def:measures}.

By using this theorem, we look upon $\gamma \mapsto S^G(\gamma, f^G)$ as functions on $\overline{\Delta}_\mathrm{reg}(G)$.

Varying endoscopic data, we set
\begin{align*}
  \Gamma_{\text{reg,ell}}^{\mathcal{E}}(\tGL(2n)) & := \bigsqcup_G \overline{\Delta}_{2n-\text{reg,ell}}(G), \\
  \mathcal{I}_{\text{cusp}}^{\mathcal{E}}(\tGL(n)) & := \bigoplus_G \overline{S\mathcal{I}}_\text{cusp}(G).
\end{align*}
where
\begin{itemize}
  \item by abuse of notation, $G$ ranges over elements in $\mathcal{E}_\text{ell}(2n)$ for which $G$ is the associated endoscopic group;
  \item $\overline{\Delta}_{2n-\text{reg,ell}}(G)$ signifies the set of elements in $\overline{\Delta}_\text{reg,ell}(G)$ that correspond to some strongly regular semisimple element in $\tGL(2n,F)$.
\end{itemize}

The correspondence $\gamma \leftrightarrow \delta$ induces a correspondence between $\Gamma_{\text{reg,ell}}^{\mathcal{E}}(\tGL(2n))$ and $\Gamma_{\text{reg,ell}}(2n)$, since it preserves $F$-ellipticity \cite[5.5]{KS}. The Whittaker-normalized transfer factors of various $G$ also merge into a single factor
$$ \Delta_\lambda: \Gamma_{\text{reg,ell}}^{\mathcal{E}}(\tGL(2n)) \times \Gamma_{\text{reg,ell}}(\tGL(2n)) \to \C $$
such that $\Delta_\lambda(\gamma,\delta) \neq 0$ if and only if $\gamma \leftrightarrow \delta$.

\begin{theorem}\label{prop:transfer-surj}
  The transfer maps $f \mapsto f^G$ induces an isomorphism
  \begin{align*}
    \mathcal{I}_\mathrm{cusp}(\tGL(2n)) & \stackrel{\sim}{\longrightarrow} \mathcal{I}_{\mathrm{cusp}}^{\mathcal{E}}(\tGL(n)) \\
    f & \longmapsto f' := (f^G)_G.
  \end{align*}
\end{theorem}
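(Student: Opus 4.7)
The plan is to establish the isomorphism by proving injectivity and surjectivity separately, both reducing to the non-degeneracy of the transfer factor matrix when restricted to a single elliptic stable class.

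The first reduction is to verify that the transfer preserves cuspidality: since the correspondence $\gamma \leftrightarrow \delta$ preserves $F$-ellipticity, the defining formula \eqref{eqn:LSK-char-0} shows $S^G(\gamma, f^G) = 0$ whenever $\gamma$ is non-elliptic regular and $f$ is cuspidal. Hence $f \mapsto (f^G)_G$ maps $\mathcal{I}_\text{cusp}(\tGL(2n))$ into $\mathcal{I}_\text{cusp}^{\mathcal{E}}(\tGL(n))$, so it is meaningful to compare these two spaces.

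The crux is a local picture at a single elliptic stable class. Fix an elliptic stable class $\bar\delta$ in $\tGL(2n,F)$ with parameter $(L, L_\pm)$. The set $\mathcal{X}(\bar\delta)$ of $\GL(2n,F)$-conjugacy classes inside $\bar\delta$ is parametrized by a Galois-cohomological torsor attached to the twisted centralizer, which by Proposition \ref{prop:centralizer-desc} equals $L^1$. On the endoscopic side, the set $\mathcal{Y}(\bar\delta)$ of pairs $(G,\gamma)$ with $\gamma \leftrightarrow \bar\delta$ is parametrized by a matching collection of decompositions $L_\pm = L'_\pm \times L''_\pm$ together with cohomological torsors for the endoscopic centralizers. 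The assignment $(\gamma,\delta) \mapsto \Delta_\lambda(\gamma,\delta)$ defines a perfect pairing between $\mathcal{Y}(\bar\delta)$ and $\mathcal{X}(\bar\delta)$: this is the twisted analogue of the Kottwitz-Shelstad orthogonality for transfer factors, and in our setting can be read off directly from Waldspurger's explicit formula (Corollary \ref{prop:Delta-formula-gen}) and the separation lemma (Lemma \ref{prop:separation}) for the simple case $G = \SO(V,q)$, with the product case $G = \SO(V,q) \times \SO(n_S+1)$ following by multiplicativity of transfer factors.

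Injectivity then falls out immediately: if $f^G = 0$ for every $G \in \mathcal{E}_\text{ell}(2n)$, then at each stable class $\bar\delta$ the linear system $\{S^G(\gamma, f^G) = 0\}_{(G,\gamma) \in \mathcal{Y}(\bar\delta)}$ combined with \eqref{eqn:LSK-char-0} forces $I^{\tGL(2n)}(\delta, f) = 0$ for every $\delta \in \mathcal{X}(\bar\delta)$, whence $f = 0$ in the cuspidal quotient by density of elliptic regular orbital integrals. For surjectivity, one inverts the transfer factor matrix at each $\bar\delta$ to manufacture a prospective function $\tilde h$ on $\Gamma_\text{reg,ell}(\tGL(2n))$ from the given datum $(h_G)_G$, and then realizes $\tilde h$ as the elliptic orbital integrals of some $f \in C_c^\infty(\tGL(2n,F))$ via the twisted density theorem for regular semisimple orbital integrals, which reduces to the Lie algebra case by descent along \cite[2.4]{Wa08}. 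The principal obstacle is the non-degeneracy of the transfer pairing between $\mathcal{X}(\bar\delta)$ and $\mathcal{Y}(\bar\delta)$; once this essentially combinatorial-cohomological fact is in hand, both directions of the isomorphism follow uniformly by linear algebra applied class by class.
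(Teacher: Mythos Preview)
The paper does not give an independent proof; it simply records that the assertion is the ``second crucial step'' in the proof of \cite[Proposition 2.1.1]{ArEndo}. Your proposal, by contrast, sketches the standard direct argument via orthogonality (adjoint) relations for transfer factors. The philosophy is right, but two steps are genuinely incomplete.

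First, the ``perfect pairing'' between $\mathcal{X}(\bar\delta)$ and $\mathcal{Y}(\bar\delta)$ is the heart of the matter, and it cannot be read off from the results you cite. Corollary~\ref{prop:Delta-formula-gen} and Lemma~\ref{prop:separation} are stated and proved in this paper only for the simple datum $(2n,0,\chi)$, i.e.\ $G=\SO(V,q)$; they say nothing about $G=\SO(V,q)\times\SO(n_S+1)$ with $n_S>0$. Your appeal to ``multiplicativity of transfer factors'' does not close this gap: the transfer factor for a product endoscopic group inside $\tGL(2n)$ is not a product of transfer factors for two independent twisted-endoscopy problems, and no such decomposition is established here. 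The required non-degeneracy is the general Kottwitz--Shelstad adjoint relation for twisted endoscopy and needs an independent source (which is exactly what Arthur supplies).

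Second, your surjectivity step conflates density with image characterization. Density of regular semisimple orbital integrals says that $f\mapsto(\delta\mapsto I^{\tGL(2n)}(\delta,f))$ is injective on $\mathcal{I}(\tGL(2n))$; it does not tell you which functions on $\Gamma_{\text{reg,ell}}(\tGL(2n))$ lie in the image. To realize your inverted datum $\tilde h$ as the orbital integrals of an actual $f\in C_c^\infty(\tGL(2n,F))$ you need a twisted Paley--Wiener type description of $\mathcal{I}_{\text{cusp}}(\tGL(2n))$ as a function space, which is a separate and substantial input. This is precisely why the paper defers the entire statement to \cite{ArEndo}.
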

\begin{proof}
  This is ``the second crucial step'' in the proof of \cite[Proposition 2.1.1]{ArEndo}, pp.58-59.
\end{proof}

Rewriting \eqref{eqn:LSK-char-0}, the isomorphism $f \mapsto f'$ is characterized as follows
\begin{gather}\label{eqn:LSK-char}
  S(\gamma, f') = \sum_{\substack{\delta \in \Gamma_{\text{reg}}(\tGL(2n)) \\ \gamma \leftrightarrow \delta}} \Delta_\lambda(\gamma,\delta) I^{\tGL(2n)}(\delta, f), \quad \gamma \in \Gamma_{\text{reg,ell}}^{\mathcal{E}}(\tGL(2n)), 
\end{gather}
where we define $S(\gamma, f') := S^G(\gamma, f^G)$ if $\gamma \in \overline{\Delta}_{\text{reg,ell}}(G)$.

\section{Crude local Langlands correspondence for orthogonal groups}\label{sec:crude-LLC}
\subsection{Selfdual $L$-parameters}
This is mainly a review of well-known results in order to fix notations. Let $N \in \Z_{\geq 1}$. Recall that $\WD_F := \text{W}_F \times \SU(2)$.

\begin{definition}
  A $L$-parameter for $\GL(N,F)$ is a semisimple, continuous representation
  $$ \phi: \WD_F \to \GL(N,\C) $$
  taken up to equivalence, i.e.\ up to $\GL(N,\C)$-conjugacy.
\end{definition}

Introduce the nested family of $L$-parameters as follows.
\begin{align*}
  \Phi(\GL(N)) & := \{ L-\text{parameters of } \GL(N,F) \}, \\
  \Phi_\text{bdd}(\GL(N)) & := \{ \phi \in \Phi(\GL(N)) : \phi \text{ has bounded image in } \GL(N,\C) \}, \\
  \Phi_{2,\text{bdd}}(\GL(N)) & := \{ \phi \in \Phi_\text{bdd}(\GL(N)) : \text{irreducible} \}, \\
  \Phi_{\text{sc,bdd}}(\GL(N)) & := \{ \phi \in \Phi_{2,\text{bdd}}(\GL(N)) : \phi|_{1 \times \SU(2)} = 1 \}.
\end{align*}
Denote the contragredient operation of $L$-parameters for $\GL(N,F)$ by $\phi \mapsto \phi^\vee$. It preserves each of the subsets above.

On the other hand, define
\begin{align*}
  \Pi(\GL(N)) & := \{ \text{smooth irreducible representations of } \GL(N,F) \}/\sim, \\
  \Pi_{\text{temp}}(\GL(N)) & := \{ \pi \in \Pi(\GL(N)): \pi \text{ is tempered} \}, \\
  \Pi_{2,\text{temp}}(\GL(N)) & := \{ \pi \in \Pi_{\text{temp}}(\GL(N)) : \pi \text{ is essentially square-integrable} \}, \\
  \Pi_{\text{sc,temp}}(\GL(N)) & := \{ \pi \in \Pi_{2,\text{temp}}(\GL(N)) : \pi \text{ is supercuspidal} \}.
\end{align*}
Recall that a tempered irreducible representation is unitary.

Now we can state a small portion of local Langlands correspondence for $\GL(N,F)$.
\begin{theorem}[{\cite{HT01,He00}}]
  There is a canonical bijection between $\Phi(\GL(N))$ and $\Pi(\GL(N))$, written as $\phi \leftrightarrow \pi$, such that
  \begin{enumerate}
    \item when $N=1$, the bijection is given by local class field theory; 
    \item for $\phi \leftrightarrow \pi$, the central character $\omega_\pi: F^\times \to \C^\times$ of $\pi$ corresponds to $\det \circ \phi: \WD_F \to \C^\times$;
    \item $\phi^\vee \leftrightarrow \pi^\vee$ if and only if $\phi \leftrightarrow \pi$;
    \item the correspondence matches
      \begin{eqnarray*}
        \Phi_{\mathrm{bdd}}(\GL(N)) & \longleftrightarrow & \Pi_{\mathrm{temp}}(\GL(N)) \\
        \Phi_{2,\mathrm{bdd}}(\GL(N)) & \longleftrightarrow & \Pi_{2,\mathrm{temp}}(\GL(N)) \\
        \Phi_{\mathrm{sc,bdd}}(\GL(N)) & \longleftrightarrow & \Pi_{\mathrm{sc,temp}}(\GL(N)).
      \end{eqnarray*}
  \end{enumerate}
\end{theorem}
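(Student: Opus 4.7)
The statement is the Harris-Taylor-Henniart theorem on the local Langlands correspondence for $\GL(N)$ over a $p$-adic field, which is of course a monumental result. Still, let me sketch the skeleton of a proof.

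The plan is to separate the problem into a uniqueness question and an existence question, following the strategy of Henniart. First I would reduce to the supercuspidal case: by the Bernstein-Zelevinsky classification, every $\pi \in \Pi(\GL(N))$ is obtained as a Langlands quotient of parabolic induction from essentially square-integrable representations of Levi subgroups, and every essentially square-integrable representation is a Zelevinsky segment built from supercuspidals of smaller general linear groups. On the parameter side, every $\phi \in \Phi(\GL(N))$ decomposes uniquely as a direct sum of indecomposables, and each indecomposable is of the form $\phi_{sc} \boxtimes \nu_r$, where $\phi_{sc} \in \Phi_{\text{sc,bdd}}(\GL(m))$ for some $m$ twisted by a character, and $\nu_r$ is the $r$-dimensional irreducible representation of $\SU(2)$. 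Hence the bijection for general $N$ is dictated by the bijection $\Phi_{\text{sc,bdd}}(\GL(m)) \leftrightarrow \Pi_{\text{sc,temp}}(\GL(m))$ for all $m \leq N$, and the last three compatibilities reduce to their supercuspidal analogues.

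Next I would address uniqueness of the supercuspidal correspondence. The key tool is Henniart's numerical local Langlands theorem: there is at most one family of bijections $\phi \leftrightarrow \pi$ between $\Phi_{\text{sc,bdd}}(\GL(m))$ and $\Pi_{\text{sc,temp}}(\GL(m))$, for all $m$, such that (i) it is compatible with local class field theory when $m=1$, (ii) it commutes with character twists $\pi \mapsto \pi\otimes\chi$, $\phi \mapsto \phi \otimes \chi$, (iii) for all $\pi_1 \leftrightarrow \phi_1$ and $\pi_2 \leftrightarrow \phi_2$ of arbitrary sizes, one has
\begin{equation*}
 L(s, \pi_1 \times \pi_2) = L(s, \phi_1 \otimes \phi_2), \quad \varepsilon(s, \pi_1 \times \pi_2, \psi_F) = \varepsilon(s, \phi_1 \otimes \phi_2, \psi_F).
\end{equation*}
The proof of uniqueness proceeds by a stability argument: after highly ramified character twists, the $\varepsilon$-factors on both sides degenerate to expressions depending only on the determinant, and a careful induction on $N$ together with the theory of Rankin-Selberg $L$-functions on the automorphic side versus Artin $L$-functions on the Galois side pins down the correspondence.

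For existence, I would invoke the construction via the étale cohomology of Shimura varieties, as in Harris-Taylor. One fixes a CM field $E/E^+$ and a unitary similitude group $G$ over $E^+$ such that locally at one place $G$ becomes $\GL(N)$ over $F$, and studies the simple Shimura varieties attached to $G$. Their $\ell$-adic cohomology carries commuting actions of $G(\A_f)$ and of $\Gamma_E$. On the supercuspidal part of the cohomology, the $\Gamma_E$-action localized at a suitable place, combined with Matsushima-type decompositions and global automorphic results of Clozel, Kottwitz, Jacquet-Shalika, Mœglin-Waldspurger, yields the desired Galois parameter attached to a supercuspidal. The compatibilities with $L$- and $\varepsilon$-factors are then verified by comparing local computations on nearby cycles (Deligne-Carayol for $\GL(2)$; Boyer, Harris-Taylor for general $N$) with the unramified Rankin-Selberg theory, and the character twist compatibility is built into the construction.

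The main obstacle, by a wide margin, is the existence step: matching up the cohomology of Shimura varieties with automorphic forms requires a base change to unitary groups (Labesse), the stabilization of the twisted trace formula (Arthur), control of the bad reduction of the relevant Shimura varieties, and vanishing theorems outside middle degree. The uniqueness part, while delicate, is essentially an exercise in stability of $\gamma$-factors and harmonic analysis; the reduction to the supercuspidal case via Zelevinsky is formal. An alternative existence proof due to Scholze uses the cohomology of moduli of $p$-divisible groups via perfectoid techniques, and another purely local existence proof is available through the construction of explicit types (Bushnell-Kutzko) together with a numerical matching as in Henniart's simple supercuspidal argument.
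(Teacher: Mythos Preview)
The paper does not prove this theorem at all: it is stated as a citation from \cite{HT01,He00}, followed only by a remark that the correspondence is characterized by matching Rankin--Selberg $L$- and $\varepsilon$-factors (with a further reference). There is thus no ``paper's own proof'' to compare against; the result is used as a black box.

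Your sketch is a reasonable high-level outline of how the result is actually established in the literature, and the broad architecture (reduction to supercuspidals via Bernstein--Zelevinsky, Henniart's uniqueness via stability of $\gamma$-factors, Harris--Taylor's existence via cohomology of simple Shimura varieties) is correct. A couple of small inaccuracies: Harris--Taylor did not need the stabilization of the twisted trace formula---their simple Shimura varieties were chosen precisely to avoid endoscopy and work with a direct comparison of trace formulas; and the phrase ``purely local existence proof \ldots\ through the construction of explicit types (Bushnell--Kutzko) together with a numerical matching'' overstates what is available---Bushnell--Henniart give an explicit description of the correspondence for certain classes of representations, but this does not constitute an independent existence proof for all supercuspidals. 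None of this matters for the present paper, which only needs the statement.
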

\begin{remark}
  The correspondence can be characterized by further properties, namely the matching for Rankin-Selberg $L$-functions and $\varepsilon$-factors; see \cite{He02} for details.
\end{remark}

Let $\phi \in \Phi(\GL(N))$. Call $\phi$ selfdual if $\phi \simeq \phi^\vee$. Selfdual $\phi$ have an unique decomposition into subrepresentations of $\WD_F$:
\begin{gather}\label{eqn:L-decomposition}
  \phi = \bigoplus_{i \in I_\phi} \ell_i \phi_i \oplus \bigoplus_{j \in J_\phi} \ell_j (\phi_j \oplus \phi_j^\vee),
\end{gather}
where
\begin{itemize}
  \item $\ell_i, \ell_j \in \Z_{\geq 1}$;
  \item the representations $(\phi_i)_{i \in I_\phi}$ are irreducible and distinct, idem for $(\phi_j)_{j \in J_\phi}$;
  \item $\phi_i^\vee \simeq \phi_i$ for all $i \in I_\phi$;
  \item $\phi_j^\vee \not\simeq \phi_j$ for all $j \in J_\phi$;
\end{itemize}

For each $i \in I_\phi$ as above, there exists an isomorphism $f_i: \phi_i \rightiso \phi_i^\vee$. By identifying $\phi_i^{\vee\vee} = \phi_i$ and consequently $f_i^{\vee\vee}=f_i$, we see that there exists a well-defined sign $\sgn(\phi_i) = \pm 1$ such that $f_i^\vee = \sgn(\phi_i) f_i$ for every choice of $f_i$.

We write
\begin{align*}
  \Phi_\text{bdd}(\tGL(N)) & := \{ \phi \in \Phi_\text{bdd}(\GL(N)) : \phi \simeq \phi^\vee \}, \\
  \Phi_\text{ell,bdd}(\tGL(N)) & := \{ \phi \in \Phi_\text{bdd}(\tGL(N)) : J_\phi=\emptyset,\; \forall i \in I_\phi,\; \ell_i=1 \text{ in } \eqref{eqn:L-decomposition} \}.
\end{align*}

\begin{remark}\label{rem:twisted-pi}
  To justify our notation, let us show how to associate a representation $(\pi,\tilde{\pi},V)$ of $\tGL(N,F)$ to a $\phi \in \Phi_\text{bdd}(\tGL(N))$. Fix an element $\delta \in \tGL(N,F)$ such that $\theta := \Ad_\delta$ fixes a Whittaker datum $(B,\lambda)$. We have the notion of $(B,\lambda)$-generic representations \cite[Definition 3.1.2]{Sh10}. Take $(\pi,V) \in \Pi_\text{temp}(\GL(N))$ such that $\phi \leftrightarrow \pi$, then $(\pi, V)$ is selfdual. Jacquet's theorem \cite{Ja77} asserts that tempered representations of $\GL(N,F)$ are $(B,\lambda)$-generic. Hence we can define $\tilde{\pi}$ by requiring that $A := \tilde{\pi}(\delta)$ satisfies $\pi \circ \theta = A \pi A^{-1}$ and $\omega \circ A = \omega$ for every $(B,\lambda)$-Whittaker functional $\omega$ of $\pi$. Conversely, every strongly irreducible representation $(\pi,\tilde{\pi},V)$ of $\tGL(N,F)$ with tempered $\pi$ is so obtained up to equivalence.
\end{remark}

\subsection{Spectral transfer}\label{sec:spectral-transfer}
\paragraph{The setting}
Henceforth we specialize to $N=2n$ and study the parameters $\phi \in \Phi_\text{ell,bdd}(\tGL(2n))$.

Write $\phi = \bigoplus_{i \in I_\phi} \phi_i$ as in \eqref{eqn:L-decomposition}. Every $\phi_i$ is viewed as an element in $\Phi_\text{ell,bdd}(\tGL(n_i))$ for some $n_i$. Set $\chi_i := \det \circ \phi_i \in \Phi_\text{bdd}(\GL(1))$, it can be regarded as a continuous homomorphism $\Gamma_F \to \{\pm 1\}$ by the self-duality of $\phi_i$. Note that $\sgn(\phi_i)=-1$ implies $\chi_i=1$.

Consider an elliptic endoscopic datum $(n_O,n_S,\chi) \in \mathcal{E}_\text{ell}(2n)$ with endoscopic group $G = \SO(V,q) \times \SO(n_S+1)$. Define
\begin{align*}
  \Pi(G) & := \{ \pi: \pi \text{ is an irreducible smooth representation of } G(F) \} / \sim, \\
  \Pi_2(G) & := \{ \pi \in \Pi(G) : \pi \text{ is square-integrable} \}, \\
  \overline{\Pi}(G) & := \Pi(G) / \Out(G), \\
  \overline{\Pi_2}(G) & := \Pi_2(G) / \Out(G).
\end{align*}

In \S\ref{sec:geom-trans} we have defined the spaces $\mathcal{I}(G)$, $\overline{\mathcal{I}}(G)$. Since $\Out(G)=\Out_{2n}(G)$, we have $\overline{\mathcal{I}}(G) = \mathcal{I}(G)^{\Out(G)}$. Fix a Haar measure on $G(F)$. Each $\bar{\sigma} \in \overline{\Pi}(G)$ defines a linear functional
\begin{align*}
  \Theta^G_\sigma : \overline{\mathcal{I}}(G) & \longrightarrow \C \\
  f & \longmapsto \Theta^G_\sigma(f) = \Tr \left(\;\int_{G(F)} f(x) \pi(x) \dd x \right),
\end{align*}
where $\sigma$ is any inverse image of $\bar{\sigma}$ in $\Pi(G)$.

Now we define the $L$-parameters corresponding to elements in $\overline{\Pi}_2(G)$. Let $\phi \in \Phi_\text{ell,bdd}(\tGL(2n))$ with the decomposition \eqref{eqn:L-decomposition}. Define
\begin{align*}
  I_\phi^+ & := \{i \in I_\phi: \sgn(\phi_i)=1 \},\\
  I_\phi^- & := \{i \in I_\phi: \sgn(\phi_i)=-1 \}.
\end{align*}

Recall the Remark \ref{rem:bijections} that $\chi$ is a continuous character $\Gamma_F \to \{\pm 1\}$ which determines $\SO(V,q)$. Define
\begin{gather}\label{eqn:Phi_2(G)}
  \overline{\Phi_2}(G) := \left\{\phi \in \Phi_\text{ell,bdd}(\tGL(2n)) : |I_\phi^-|=n_S, \; \prod_{i \in I_\phi^+} \chi_i =\chi \right\}.
\end{gather}
Therefore
\begin{gather}\label{eqn:Phi_2-disjoint}
  \Phi_\text{ell,bdd}(\tGL(2n)) = \bigsqcup_G \overline{\Phi_2}(G).
\end{gather}

Moreover, $\phi$ gives rise to
\begin{align*}
  \phi_O & := \bigoplus_{i \in I_\phi^+} \phi_i \in \Phi_\text{ell,bdd}(\tGL(n_O)),\\
  \phi_S & := \bigoplus_{i \in I_\phi^-} \phi_i \in \Phi_\text{ell,bdd}(\tGL(n_S)).
\end{align*}

\paragraph{The crude correspondence}
Let $G$ be an elliptic endoscopic group as above. Fix the $\theta$-stable $F$-splitting and Whittaker datum as in \S\ref{sec:geom-trans}. Choose compatible Haar measures in the sense of Definition \ref{def:measures}.

\begin{theorem}[Arthur, {\cite[Theorem 1.5.1 and 2.2.1]{ArEndo}}]\label{prop:LLC-SO}
  For each $\phi \in \overline{\Phi_2(G)}$, one can canonically associate a nonempty finite subset $\Pi_\phi$ of $\overline{\Pi_2}(G)$, such that
  \begin{enumerate}
    \item there is the disjoint union
    $$ \overline{\Pi_2}(G) = \bigsqcup_{\phi \in \overline{\Phi_2}(G)} \Pi_\phi ; $$
    \item the linear functional on $\overline{\mathcal{I}}(G)$
      \begin{gather*}
        \Theta_\phi^G: f^G \longmapsto \sum_{\bar{\sigma} \in \Pi_\phi} \Theta^G_{\sigma}(f^G)
      \end{gather*}
      factors through $\overline{S\mathcal{I}}(G)$, i.e.\ $\Theta_\phi^G$ is a stable distribution;
    \item let $f \mapsto f^G$ be the transfer map from $\mathcal{I}(\tGL(n))$ to $\overline{S\mathcal{I}}(G)$ in Theorem \ref{prop:LSK}, if $\pi \in \Pi_{\mathrm{temp}}(\GL(2n))$, $\phi \leftrightarrow \pi$, then
    $$ \Theta^{\tGL(2n)}_{\tilde{\pi}}(f) = \Theta_\phi^G(f^G) $$
    for all $f \in \mathcal{I}(\tGL(2n))$, where $\tilde{\pi}$ is the representation of $\tGL(2n,F)$ defined in Remark \ref{rem:twisted-pi}.
    \item set $G_O := \SO(V,q)$, $G_S := \SO(N_S+1)$, if $f^G$ admits a decomposition $f^G = f^{G_O} \otimes f^{G_S}$ with $f^{G_O} \in \overline{S\mathcal{I}}(G_O)$, $f^{G_S} \in S\mathcal{I}(G_S)$, then
    $$ \Theta_\phi^G(f^G) = \Theta_{\phi_O}^{G_O}(f^{G_O}) \Theta_{\phi_S}^{G_S}(f^{G_S}). $$
  \end{enumerate}
\end{theorem}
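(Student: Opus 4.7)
Since the statement is a package of results from Arthur's endoscopic classification, my plan is to follow his global-to-local strategy rather than to attempt a new argument. The first step is to globalize: realize $F$ as a completion at a place $v_0$ of a number field $\dot{F}$, and, using Arthur's simple globalization results (based on the trace formula with prescribed local behavior at auxiliary places), produce a discrete automorphic representation $\Pi$ of $\GL(2n, \mathbb{A}_{\dot F})$ whose localization at $v_0$ realizes the prescribed $\phi$ and whose global parameter has no isobaric multiplicities. This allows the use of the global twisted trace formula for $\tGL(2n)$.

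The core step is to compare the stabilized invariant twisted trace formula for $\tGL(2n, \mathbb{A}_{\dot F})$ with the stable trace formulas of its elliptic endoscopic groups $G$, as set up in \S\ref{sec:tGL}. On the geometric side, this comparison is provided by the Langlands-Shelstad-Kottwitz transfer (Theorem \ref{prop:LSK}), and in particular relies on Ngô's proof of the fundamental lemma. On the spectral side, it yields a global identity between the twisted trace of $\Pi$ and a sum over collections of discrete automorphic representations on the endoscopic groups $G$. By induction on $n$, all contributions except the ``leading'' one cancel, leaving an identity of the form $\prod_v \Theta^{\tGL(2n)}_{\tilde\pi_v}(f_v) = \prod_v \Theta^{G}_{\phi_v}(f^G_v)$ along the discrete spectrum.

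Next I would localize. Varying the test function at $v_0$ while fixing it at other places — a standard separation argument, feasible because the transfer map is surjective onto $\mathcal{I}_\text{cusp}^{\mathcal{E}}$ by Theorem \ref{prop:transfer-surj} — extracts the sought local identity $\Theta^{\tGL(2n)}_{\tilde\pi}(f) = \Theta^G_\phi(f^G)$. This simultaneously defines $\Pi_\phi$, as the support of the right-hand side viewed as a distribution on $\overline{\Pi_2}(G)$, and establishes the stability of $\Theta^G_\phi$, since the factorization through $f \mapsto f^G$ is forced by invariance of the left-hand side together with the fact that the transfer map has a well-defined image in $\overline{S\mathcal{I}}(G)$. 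The disjointness in (1) follows from the linear independence of twisted characters of inequivalent $\tilde\pi$ (hence of inequivalent $\phi$), and the product decomposition (4) follows from the observation that the transfer map respects the factorization $G = G_O \times G_S$, matching $\phi_O \oplus \phi_S$ with $\overline{S\mathcal{I}}(G_O) \otimes S\mathcal{I}(G_S)$ via the compatibility of Whittaker normalizations.

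The principal obstacle is the full stabilization of the twisted trace formula for $\tGL(2n)$, the parallel stabilization for quasisplit classical groups, and the delicate sign analysis in the transfer factors (Whittaker normalization, $\varepsilon$-factor conventions, Kottwitz-Shelstad sign) — all of which constitute the heart of \cite{ArEndo} and complementary work on the twisted side. For the purposes of the present article, the theorem is used as a black box; the only genuinely local task is to check that Arthur's normalization conventions are compatible with the Whittaker datum and transfer factor fixed in \S\ref{sec:Delta-formula1}, which is precisely what Remark \ref{rem:twisted-pi} and Lemma \ref{prop:eta-sp} are designed to arrange.
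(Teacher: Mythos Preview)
The paper does not prove this theorem at all: it is stated as a citation of Arthur's results \cite[Theorem 1.5.1 and 2.2.1]{ArEndo} and used as a black box, with only the brief remark that the last assertion is trivial for simple endoscopic data and that $\Pi_\phi$ is characterized by the listed properties. Your proposal therefore goes far beyond what the paper attempts.

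That said, your sketch of Arthur's global-to-local strategy (globalization, stabilized twisted trace formula comparison, localization via separation of places) is a reasonable caricature of the architecture of \cite{ArEndo}, and you correctly acknowledge at the end that the theorem functions as a black box here. But be careful: several steps you describe as routine are in fact the deepest parts of Arthur's work. The ``induction on $n$'' cancelling all but the leading term hides the entire comparison of discrete parts of trace formulas and the sign lemma; the claim that ``disjointness in (1) follows from linear independence of twisted characters'' is circular as stated, since defining $\Pi_\phi$ and proving the packets are disjoint and exhaust $\overline{\Pi_2}(G)$ are intertwined with the global multiplicity formula; and the stabilization of the twisted trace formula for $\tGL(2n)$ was not fully available in \cite{ArEndo} itself but relies on later work of Moeglin--Waldspurger. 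For the purposes of this paper, the correct response is simply to cite Arthur and move on, as the author does.
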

The last assertion is trivial for simple endoscopic data. Moreover, $\Pi_\phi$ is characterized by these properties; see \cite[Remark 1 after Theorem 2.2.1]{ArEndo}.

\begin{remark}
  To see why this furnishes a local Langlands correspondence, one should regard the $L$-embedding $\xi: \Lgrp{G} \hookrightarrow \GL(2n,\C) \times \text{W}_F$ given by twisted endoscopy. This is explained, for example, in \cite[1.8]{Wa10}. The correspondence is crude in the sense that $\Lgrp{G}$ can admit an extra symmetry in $\Lgrp{\GL(2n)}$ if $\Out(G) \neq \{1\}$. See also \cite[pp.18-19]{KS}.
\end{remark}

\begin{definition}
  Let $\phi \in \Phi_\text{ell,bdd}(\tGL(2n))$. We say that $\phi$ comes from $G$ if $\phi \in \overline{\Phi_2}(G)$. Let $\pi$ be a selfdual representation of $\GL(2n,F)$, we say that $\pi$ comes from $G$ if $\pi \leftrightarrow \phi$ for some $\phi$ coming from $G$.
\end{definition}
By \eqref{eqn:Phi_2-disjoint}, $\phi$ comes from exactly one $G$.

\subsection{Character relations}
The aim of this subsection is to establish a twisted case of \cite[Corollary 6.4]{Ar96} that relates character values. Retain the notations in the preceding subsection and use compatible measures prescribed in Definition \ref{def:measures}.

\paragraph{Measures and integration}
We set up a convenient integration apparatus as in \cite{Ar96}. The sets $\Gamma_\text{reg,ell}(\tGL(2n))$, $\Delta_\text{reg,ell}(\tGL(n))$ acquire quotient topologies from $\tGL(2n)_\text{reg}$. Define Radon measures on these spaces by requiring that
\begin{align}
  \int_{\Gamma_\text{reg,ell}(\tGL(2n))} \alpha(\delta) \dd\delta & = \sum_{\tilde{T}} |W(\GL(2n,F), \tilde{T}(F))|^{-1} \int_{\tilde{T}(F)_{/\theta}} \alpha(t) \dd t, \\
  \int_{\Delta_\text{reg,ell}(\tGL(2n))} \alpha'(\delta') \dd \delta' & = \sum_{\tilde{T}} |W(\GL(2n), \tilde{T})(F)|^{-1} \int_{\tilde{T}_{/\theta}(F)} \alpha'(t) \dd t,
\end{align}
where $\alpha \in C_c(\Gamma_\text{reg,ell}(\tGL(2n)))$, $\alpha' \in C_c(\Delta_\text{reg,ell}(\tGL(2n)))$; the $\tilde{T}$ ranges over conjugacy classes and stable conjugacy classes of elliptic maximal tori in $\tGL(2n)$, respectively. The measures on $T(F)_{/\theta}$, $T_{/\theta}(F)$ and $T_\theta(F)$ are related as in \S\ref{sec:orbint}.

Write $\delta \mapsto \delta'$ if $\delta$ lies in the stable conjugacy class $\delta'$. It follows easily from the definitions above that
\begin{gather}\label{eqn:st-unst-tGL}
  \int_{\Delta_\text{reg,ell}(\tGL(2n))} \left( \sum_{\delta \mapsto \delta'} \alpha(\delta) \right) \dd\delta' = \int_{\Gamma_\text{reg,ell}(\tGL(2n))} \alpha(\delta) \dd\delta, \quad \alpha \in C_c(\Gamma_\text{reg,ell}(\tGL(2n))).
\end{gather}

Using similar notations, define Radon measures on $\Gamma_\text{reg,ell}(G)$, $\Delta_\text{reg,ell}(G)$ such that
\begin{align}\label{eqn:measure-1}
  \int_{\Gamma_\text{reg,ell}(G)} \beta(\gamma) \dd\gamma & = \sum_{T_G} |W(G(F), T_G(F)|^{-1} \int_{T_G(F)} \beta(t) \dd t, \\
  \int_{\Delta_\text{reg,ell}(G)} \beta'(\gamma') \dd \gamma' & = \sum_{T_G} |W(G,T_G)(F)|^{-1} \int_{T_G(F)} \beta'(t) \dd t, \\
  \int_{\Delta_\text{reg,ell}(G)} \left( \sum_{\gamma \mapsto \gamma'} \beta(\gamma) \right) \dd\gamma' &= \int_{\Gamma_\text{reg,ell}(G)} \beta(\gamma) \dd\gamma ,
\end{align}
where $\beta \in C_c(\Gamma_\text{reg,ell}(G))$, $\beta' \in C_c(\Delta_\text{reg,ell}(G))$.

Equip $\overline{\Gamma}_\text{reg,ell}(G)$, $\overline{\Delta}_\text{reg,ell}(G)$ with quotient measures by $\Out_{2n}(G)$; we can also replace $\overline{\Delta}_\text{reg,ell}(G)$ by its open dense subset $\overline{\Delta}_{2n-\text{reg,ell}}(G)$. By taking disjoint union, we obtain a Radon measure on $\Gamma_\text{reg,ell}^{\mathcal{E}}(\tGL(2n))$.

The following result is a twisted analogue of \cite[Lemma 2.3]{Ar96}.
\begin{lemma}\label{prop:change-of-variables}
  Suppose the Haar measures are compatible in the sense of Definition \ref{def:measures}. Let $\alpha \in C_c(\Gamma_{\mathrm{reg,ell}}(\tGL(2n)))$, $\beta \in C_c(\Gamma_{\mathrm{reg,ell}}^{\mathcal{E}}(\tGL(2n)))$, then
  \begin{multline*}
    \int_{\Gamma_{\mathrm{reg,ell}}(\tGL(2n))} \left( \sum_{\gamma \in \Gamma_{\mathrm{reg,ell}}^{\mathcal{E}}(\tGL(2n))} \beta(\gamma) \Delta_\lambda(\gamma,\delta)\alpha(\delta) \right) \dd\delta = \\
    \int_{\Gamma_{\mathrm{reg,ell}}^{\mathcal{E}}(\tGL(2n))} \left( \sum_{\delta \in \Gamma_{\mathrm{reg,ell}}(\tGL(2n))} \beta(\gamma)\Delta_\lambda(\gamma,\delta)\alpha(\delta) \right) \dd\gamma .
  \end{multline*}
\end{lemma}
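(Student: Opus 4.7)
The statement is a twisted analogue of \cite[Lemma 2.3]{Ar96}. The idea is to reduce both sides to sums of torus integrals via the Weyl integration formulas (Propositions \ref{prop:Weyl-int} and \ref{prop:Weyl-int-st}), and then identify them term by term using twisted endoscopy's correspondence of tori together with the compatibility of Haar measures (Definition \ref{def:measures}).

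First, one expands the LHS by inserting the definition of the measure on $\Gamma_{\mathrm{reg,ell}}(\tGL(2n))$:
\[
    \text{LHS} = \sum_{\tilde{T}} \frac{1}{|W(\GL(2n,F), \tilde{T}(F))|} \int_{\tilde{T}(F)_{/\theta}} \alpha(t) \Biggl( \sum_{\gamma \in \Gamma_{\mathrm{reg,ell}}^{\mathcal{E}}(\tGL(2n))} \beta(\gamma) \Delta_\lambda(\gamma,t) \Biggr) \dd t,
\]
where $\tilde{T}$ runs over $\GL(2n,F)$-conjugacy classes of elliptic maximal tori. By the very definition of the correspondence (Definition \ref{def:correspondence}) together with Lemma \ref{prop:separation} in the quasisplit even orthogonal case, the inner sum is supported on those $\gamma$ such that $\gamma \leftrightarrow \delta$ for some $\delta \in \tilde{T}(F)$; such $\gamma$ are parametrized by decompositions of the pair $(L,L_\pm)$ attached to the stable class of $\tilde{T}$ into endoscopic factors $(L'\times L'',\,L'_\pm \times L''_\pm)$, which in turn determine the endoscopic datum $(n_O, n_S, \chi)$ and a stable class of elliptic maximal torus $T_G \subset G$ such that $T_{/\theta} \simeq T_G$ as $F$-tori.

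Next, one expands the RHS similarly, using that the measure on $\Gamma_{\mathrm{reg,ell}}^{\mathcal{E}}(\tGL(2n))$ is the disjoint union over $G \in \mathcal{E}_\text{ell}(2n)$ of the quotients by $\Out_{2n}(G)$ of the measures on $\Delta_{\mathrm{reg,ell}}(G)$:
\[
    \text{RHS} = \sum_{G} \sum_{T_G} \frac{1}{|W(G, T_G)(F)|\,|\Out_{2n}(G)|} \int_{T_G(F)} \beta(\gamma) \Biggl( \sum_{\delta} \Delta_\lambda(\gamma,\delta) \alpha(\delta) \Biggr) \dd \gamma,
\]
with the appropriate interpretation of the $\Out_{2n}(G)$-quotient on conjugacy classes with nontrivial stabilizer. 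The inner sum over $\delta$ in $\Gamma_{\mathrm{reg,ell}}(\tGL(2n))$ is supported on the finitely many $\delta$ matching $\gamma$; these are parametrized by the parameter of $\gamma$ together with a choice of lift from $L_\pm^\times/N_{L/L_\pm}(L^\times)$ to an ordinary conjugacy class.

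To identify the two expressions, one matches the outer sums by pairing each stable $\tilde{T}$ on the left with the corresponding $(G, T_G)$ on the right, uses the $F$-isomorphism $T_{/\theta} \rightiso T_G$ from twisted endoscopy to identify $\tilde{T}_{/\theta}(F)$ with $T_G(F)$, and invokes the compatibility of Haar measures to match the integrals. The main obstacle is the combinatorial bookkeeping: one must compare the factor $|W(\GL(2n,F),\tilde{T}(F))|^{-1}$ on the LHS with $|W(G,T_G)(F)|^{-1}|\Out_{2n}(G)|^{-1}$ on the RHS, taking into account (i) the difference between conjugacy and stable conjugacy of tori in $\tGL(2n)$, governed by $H^1$ of the coinvariant torus, and (ii) the summation over the various decompositions $(L,L_\pm) = (L'\times L'', L'_\pm \times L''_\pm)$ giving rise to the same $\tilde{T}$. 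This counting is the twisted counterpart of the computation carried out in \cite[\S 2.1]{Ar96} and is treated by the transfer-factor normalizations of \cite[\S 4.3, \S 5.5]{KS}; once it is in place, the two expressions coincide and the lemma follows.
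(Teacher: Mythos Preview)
Your outline is headed in the right direction, but it leaves the step you yourself call the ``main obstacle'' undone: the comparison of the weights $|W(\GL(2n,F),\tilde{T}(F))|^{-1}$ (an ordinary Weyl group over $F$-points, indexing \emph{conjugacy} classes of tori) against $|W(G,T_G)(F)|^{-1}|\Out_{2n}(G)|^{-1}$ (the $F$-points of the algebraic Weyl group, indexing \emph{stable} classes), together with the counting of endoscopic factorizations of $(L,L_\pm)$ and of $\delta$'s over a fixed $\gamma$. Appealing to \cite[\S 2.1]{Ar96} and \cite[\S 4.3, \S 5.5]{KS} is a deferral, not a proof; neither reference contains this particular twisted computation ready-made.

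The paper sidesteps the bookkeeping entirely by passing to \emph{stable} conjugacy on the $\tGL(2n)$ side first. Using \eqref{eqn:st-unst-tGL}, the LHS becomes
\[
\int_{\Delta_{\mathrm{reg,ell}}(\tGL(2n))} \Biggl(\sum_{\substack{\gamma,\delta\\ \delta\mapsto\delta'}} \beta(\gamma)\Delta_\lambda(\gamma,\delta)\alpha(\delta)\Biggr)\dd\delta'.
\]
Now the correspondence $\gamma\leftrightarrow\delta$ depends only on the stable class $\delta'$, and for each $G$ the torus isomorphism $T_G\rightiso T_{/\theta}$ from Definition~\ref{def:measures}(3) assembles into a map $\Xi:\overline{\Delta}_{2n\text{-reg,ell}}(G)\to\Delta_{\mathrm{reg,ell}}(\tGL(2n))$ which is a local homeomorphism with Jacobian $1$ by the very compatibility of measures. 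Since $\Delta_\lambda(\gamma,\delta)=0$ unless $\delta'$ lies in the image of $\Xi$, the RHS transforms under $\Xi$ into exactly the same integral over $\Delta_{\mathrm{reg,ell}}(\tGL(2n))$. No Weyl-group comparison is needed: the passage to stable classes absorbs the discrepancy between $W(\GL(2n,F),\tilde{T}(F))$ and $W(\GL(2n),\tilde{T})(F)$ on one side, and the measure on $\Gamma^{\mathcal{E}}_{\mathrm{reg,ell}}$ is already built from stable classes on the other.
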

\begin{proof}
  Using \eqref{eqn:st-unst-tGL}, the left hand side transforms into
  $$ \int_{\Delta_{\mathrm{reg,ell}}(\tGL(2n))} \left( \sum_{\substack{\gamma,\delta \\ \delta \mapsto \delta'}} \beta(\gamma) \Delta_\lambda(\gamma,\delta)\alpha(\delta) \right) \dd\delta' . $$

  Recall that the correspondence $\gamma \leftrightarrow \delta$ is really determined by the stable conjugacy class $\delta'$ such that $\delta \mapsto \delta'$; write this correspondence as $\gamma \leftrightarrow \delta'$. To conclude, we claim that the right hand side is equal to
  $$ \int_{\Delta_{\mathrm{reg,ell}}(\tGL(2n))} \left( \sum_{\gamma \leftrightarrow \delta'} \beta(\gamma) \Delta_\lambda(\gamma,\delta)\alpha(\delta) \right) \dd\delta' . $$

  Indeed, it suffices to concentrate on only one endoscopic group $G$ at a time. We transfer the classes between $\Delta_{2n-\text{reg}}(G)$ and $\Delta_\text{reg}(\tGL(2n))$ using the maps $T_G \rightiso T_{/\theta}$ mentioned in Definition \ref{def:measures} where $T_G \subset G$, $\tilde{T} \subset \GL(2n)$ are appropriate elliptic maximal tori; it factorizes into a map between $F$-varieties $T_G/W(G,T) \to T_{/\theta}/W(\GL(2n),\tilde{T})$. Taking $F$-points induces a well-defined map $\Xi: \Delta_{2n-\text{reg}}(G) \to \Delta_\text{reg}(\tGL(2n))$, which is locally a homeomorphism. We have seen that it factors through the $\Out_{2n}(G)$-action.

  Observe that only those elements $\delta'$ in the image of $\Xi$ contribute to the integral because of the presence of $\Delta_\lambda(\gamma,\delta)$. By Definition \ref{def:measures}, the change of variables by $\Delta_{2n-\text{reg,ell}}(G) \xrightarrow{\Xi} \Delta_\text{reg,ell}(\tGL(2n))$ has jacobian equal to $1$, whence the claim.
\end{proof}

\paragraph{Character values}
Let $\Lambda_G$ be a linear functional $\overline{S\mathcal{I}}(G) \to \C$. There is a canonical way to extend $\Lambda_G$ to $S\mathcal{I}(G)$, namely by setting
\begin{gather}\label{eqn:average}
  \tilde{\Lambda}_G: f \mapsto \Lambda_G \left(\frac{f+s(f)}{2}\right)
\end{gather}
where $s$ is the nontrivial element in $\Out_{2n}(G)$ if $\Out_{2n}(G) \neq \{1\}$; otherwise take $s=\identity$. Thus $\tilde{\Lambda}_G$ can be viewed as an invariant distribution on $G(F)$. We say $\Lambda_G$ is represented by a locally integrable function (resp. locally constant on $G_\text{reg}(F)$) if $\tilde{\Lambda}_G$ is. Note that when $\Lambda_G$ is represented by a locally integrable function, the invariant function is necessarily $\Out_{2n}(G)$-invariant.

In particular, the linear functionals $\Theta^G_\phi$ defined in Theorem \ref{prop:LLC-SO} are represented by locally integrable functions which are locally constant on $G_\text{reg}(F)$.

Consider now a linear functional $\Lambda' = (\Lambda_G)_G$ on $\bigoplus_G \overline{S\mathcal{I}}(G)$. We say $\Lambda'$ is represented by a locally integrable function, etc., if each component $\Lambda_G$ is. If it is indeed the case, we will view $\Lambda'$ as a function on $\bigsqcup_G \overline{\Delta}_\text{reg}(G)$. One can also speak of its restriction to $\Gamma_\text{reg,ell}^{\mathcal{E}}(\tGL(2n))$.

In the same vein, define the Weyl discriminant $D' = (D^G)_G$ as a function on $\bigsqcup_G \overline{\Delta}_\text{reg}(G)$.

\begin{proposition}\label{prop:dist-identity}
  Let $\Lambda$ (resp. $\Lambda' = (\Lambda_G)_G$) be a linear functional on $\mathcal{I}(\tGL(2n))$ (resp. $\bigoplus_G \overline{S\mathcal{I}}(G)$). Assume that $\Lambda'|_{\mathcal{I}_\mathrm{cusp}^{\mathcal{E}}(\tGL(2n))}$ transfers to $\Lambda|_{\mathcal{I}_\mathrm{cusp}(\tGL(2n))}$ in the sense that
  $$ \Lambda'(f') = \Lambda(f), \quad f \in \mathcal{I}_\mathrm{cusp}(\tGL(2n)), \; f \mapsto f'=(f^G)_G; $$
  cf.\ Theorem \ref{prop:transfer-surj}. If $\Lambda$, $\Lambda'$ are both represented by locally integrable functions which are locally constant on regular semisimple elements, then
  $$ |D^{\tGL(2n)}(\delta)|^{\frac{1}{2}}\Lambda(\delta) = \sum_{\gamma \in \Gamma_{\mathrm{reg,ell}}^{\mathcal{E}}(\tGL(2n))} |D'(\gamma)|^{\frac{1}{2}} \Lambda'(\gamma) \Delta_\lambda(\gamma,\delta), \quad \delta \in \Gamma_{\mathrm{reg,ell}}(\tGL(2n)). $$
\end{proposition}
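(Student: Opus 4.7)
The plan is to pair both $\Lambda$ and $\Lambda'$ against the transfer of a common cuspidal test function $f \in \mathcal{I}_{\mathrm{cusp}}(\tGL(2n))$, and unfold the resulting equality by Weyl integration to extract the pointwise identity.

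First, since $\Lambda$ is represented by a locally integrable, $\Ad$-invariant function, locally constant on $\tGL(2n)_{\mathrm{reg}}$, applying Proposition \ref{prop:Weyl-int} to $\Lambda \cdot f$ for $f$ cuspidal makes all non-elliptic Levi contributions vanish (since $O^{\tGL(2n)}_t(f)=0$ off the elliptic locus), yielding
\[
  \Lambda(f) = \int_{\Gamma_{\mathrm{reg,ell}}(\tGL(2n))} |D^{\tGL(2n)}(\delta)|^{\frac{1}{2}} \Lambda(\delta) \, I^{\tGL(2n)}(\delta,f) \dd\delta.
\]
The dual computation on the endoscopic side uses Theorem \ref{prop:transfer-surj}, which guarantees that $f' = (f^G)_G$ lies in $\mathcal{I}_{\mathrm{cusp}}^{\mathcal{E}}(\tGL(2n))$, so each component $f^G \in \overline{S\mathcal{I}}_{\mathrm{cusp}}(G)$. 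Applying Proposition \ref{prop:Weyl-int-st} to the canonical extension $\tilde{\Lambda}_G$ of \eqref{eqn:average}, summing over $G$, and restricting to classes that correspond to some $\delta \in \Gamma_{\mathrm{reg,ell}}(\tGL(2n))$ (the only ones on which $S(\gamma,f')$ is nonzero by \eqref{eqn:LSK-char}), I obtain
\[
  \Lambda'(f') = \int_{\Gamma_{\mathrm{reg,ell}}^{\mathcal{E}}(\tGL(2n))} |D'(\gamma)|^{\frac{1}{2}} \Lambda'(\gamma) \, S(\gamma,f') \dd\gamma.
\]

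Next, I substitute the characterization \eqref{eqn:LSK-char} of $S(\gamma,f')$ as a weighted sum over $\delta \leftrightarrow \gamma$ of twisted orbital integrals $I^{\tGL(2n)}(\delta,f)$, and switch the order of integration by Lemma \ref{prop:change-of-variables}. Together with the hypothesis $\Lambda(f) = \Lambda'(f')$, this produces
\[
  \int_{\Gamma_{\mathrm{reg,ell}}(\tGL(2n))} I^{\tGL(2n)}(\delta,f) \, \Phi(\delta) \dd\delta = 0,
\]
valid for every $f \in \mathcal{I}_{\mathrm{cusp}}(\tGL(2n))$, where
\[
  \Phi(\delta) := |D^{\tGL(2n)}(\delta)|^{\frac{1}{2}} \Lambda(\delta) - \sum_{\gamma \in \Gamma_{\mathrm{reg,ell}}^{\mathcal{E}}(\tGL(2n))} |D'(\gamma)|^{\frac{1}{2}} \Lambda'(\gamma) \Delta_\lambda(\gamma,\delta).
\]

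The final step—and the only delicate point—is to conclude $\Phi \equiv 0$ from this integral identity. Both summands in $\Phi$ are locally constant on $\Gamma_{\mathrm{reg,ell}}(\tGL(2n))$: the first by the hypothesis on $\Lambda$, the second because $\Delta_\lambda$ and $\Lambda'$ are locally constant on the regular locus and because only finitely many $\gamma$ with $\gamma \leftrightarrow \delta$ contribute for $\delta$ in a small neighborhood. Hence the obstruction reduces to density of cuspidal orbital integrals: given $\delta_0$, one must exhibit $f \in \mathcal{I}_{\mathrm{cusp}}(\tGL(2n))$ whose orbital integral $I^{\tGL(2n)}(\cdot,f)$ approximates any chosen bump function concentrated near $\delta_0$ on $\Gamma_{\mathrm{reg,ell}}(\tGL(2n))$. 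This is the twisted analogue of Kazhdan's density of elliptic cuspidal orbital integrals and can be extracted by the same twisted descent argument used for Proposition \ref{prop:orbint-bound}, reducing to the Lie algebra setting. Applying this density to $\Phi$ yields $\Phi(\delta_0) = 0$ for every $\delta_0$, which is the desired identity.
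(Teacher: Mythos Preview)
Your argument is correct and follows essentially the same route as the paper: apply the (stable, resp.\ twisted) Weyl integration formulas to $\Lambda'(f')$ and $\Lambda(f)$ for cuspidal $f$, feed in \eqref{eqn:LSK-char}, swap the order via Lemma \ref{prop:change-of-variables}, and then separate points using cuspidal test functions whose normalized orbital integrals concentrate at a chosen $\delta_0$. The paper phrases the last step as choosing a sequence $f_i$ with $I^{\tGL(2n)}(\cdot,f_i)$ approaching the Dirac mass at $\delta_0$, which is exactly the density input you isolate.
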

\begin{proof}
  In what follows, we treat all $G$ simultaneously by using the language in Theorem \ref{prop:transfer-surj} and \eqref{eqn:LSK-char}.

  Take $f \in \mathcal{I}_\text{cusp}(\tGL(2n))$. By Proposition \ref{prop:Weyl-int} and the definition of measures on $\Gamma_\text{reg,ell}(\tGL(2n))$, we have
  $$ \Lambda(f) = \int_{\Gamma_\text{reg,ell}(\tGL(2n))} |D^{\tGL(2n)}(\delta)|^{\frac{1}{2}}\Lambda(\delta) I^{\tGL(2n)}(\delta,f) \dd\delta. $$

  Similarly, the untwisted case of Proposition \ref{prop:Weyl-int-st}, applied simultaneously to each $G$, implies
  $$ \Lambda'(f') = \int_{\Gamma_\text{reg,ell}^{\mathcal{E}}(\tGL(2n))} |D'(\gamma)|^{\frac{1}{2}}\Lambda'(\gamma) S'(\gamma, f') \dd\gamma . $$
  Applying \eqref{eqn:LSK-char}, $\Lambda'(f')$ is equal to
  $$ \int_{\Gamma_\text{reg,ell}^{\mathcal{E}}(\tGL(2n))} |D'(\gamma)|^{\frac{1}{2}} \Lambda'(\gamma) \sum_{\delta \in \Gamma_\text{reg,ell}(\tGL(2n))} \Delta_\lambda(\gamma,\delta) I^{\tGL(2n)}(\delta,f) \dd\delta .$$

  Now we can apply Lemma \ref{prop:change-of-variables} to get
  $$ \Lambda'(f') = \int_{\Gamma_\text{reg,ell}(\tGL(2n))} \left( \sum_{\gamma \in \Gamma_\text{reg,ell}^{\mathcal{E}}(\tGL(2n))} |D'(\gamma)|^{\frac{1}{2}}\Lambda'(\gamma) \Delta_\lambda(\gamma,\delta) \right) I^{\tGL(2n)}(\delta,f) \dd\gamma . $$

  Let $\delta_0 \in \Gamma_\text{reg,ell}(\tGL(2n))$. Replace $f$ by a sequence $\{f_i\}_{i=1}^\infty$ in $\mathcal{I}_\text{cusp}(\tGL(2n))$ such that $I^{\tGL(2n)}(\cdot,f_i)$ approaches the Dirac measure concentrated at $\delta_0$ as $i \to \infty$, then the equality $\Lambda(f_i) = \Lambda'(f'_i)$ and the equations above show
  $$ |D^{\tGL(2n)}(\delta_0)|^{\frac{1}{2}}\Lambda(\delta_0) = \sum_{\gamma \in \Gamma_\text{reg,ell}^{\mathcal{E}}(\tGL(2n))} |D'(\gamma)|^{\frac{1}{2}} \Lambda'(\gamma) \Delta_\lambda(\gamma,\delta_0), $$
  as asserted.
\end{proof}

For $\phi \in \overline{\Phi_2}(G)$, define
\begin{gather*}
  S^G(\phi,\gamma) := |D^G(\gamma)|^{\frac{1}{2}}\Theta^G_\phi(\gamma), \quad \gamma \in \overline{\Delta}_{\mathrm{reg,ell}}(G).
\end{gather*}

\begin{corollary}\label{prop:char-identity}
  Let $\pi \in \Pi_\mathrm{temp}(\GL(2n))$ be selfdual with $L$-parameter $\phi \in \Phi_\mathrm{ell,bdd}(\tGL(2n))$. Let $G$ be the elliptic endoscopic group such that $\phi \in \overline{\Phi_2}(G)$, then
  $$ I^{\tGL(2n)}(\tilde{\pi},\delta) = \sum_{\gamma \in \overline{\Delta}_{\mathrm{reg,ell}}(G)} S^G(\phi,\gamma) \Delta_\lambda(\gamma,\delta) , \quad \delta \in \Gamma_{\mathrm{reg,ell}}(\tGL(2n)). $$
  Here $\tilde{\pi}$ is the representation of $\tGL(2n,F)$ defined in Remark \ref{rem:twisted-pi}.
\end{corollary}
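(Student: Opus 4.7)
The plan is to derive the identity as an immediate application of Proposition \ref{prop:dist-identity}. I set $\Lambda := \Theta^{\tGL(2n)}_{\tilde\pi}$ on $\mathcal{I}(\tGL(2n))$, and define $\Lambda' = (\Lambda_H)_H$ on $\bigoplus_H \overline{S\mathcal{I}}(H)$ by $\Lambda_G := \Theta^G_\phi$ and $\Lambda_H := 0$ for every other $H \in \mathcal{E}_\text{ell}(2n)$. The conclusion of Proposition \ref{prop:dist-identity} will then read
$$|D^{\tGL(2n)}(\delta)|^{\frac{1}{2}}\,\Theta^{\tGL(2n)}_{\tilde\pi}(\delta) \;=\; \sum_{\gamma \in \overline{\Delta}_{2n-\text{reg,ell}}(G)} |D^G(\gamma)|^{\frac{1}{2}}\,\Theta^G_\phi(\gamma)\,\Delta_\lambda(\gamma,\delta).$$
The left-hand side is $I^{\tGL(2n)}(\tilde\pi,\delta)$ by definition; each summand on the right equals $S^G(\phi,\gamma)\,\Delta_\lambda(\gamma,\delta)$; and the index set may be enlarged harmlessly to $\overline{\Delta}_\text{reg,ell}(G)$ because $\Delta_\lambda(\gamma,\delta) = 0$ on classes in $\overline{\Delta}_\text{reg,ell}(G)\setminus \overline{\Delta}_{2n-\text{reg,ell}}(G)$, producing exactly the formula claimed.

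Two hypotheses of Proposition \ref{prop:dist-identity} remain to be verified. First, the cuspidal transfer compatibility $\Lambda(f) = \Lambda'(f')$ for $f \in \mathcal{I}_\text{cusp}(\tGL(2n))$: since $\Lambda_H = 0$ for $H \neq G$, this reduces to the identity $\Theta^{\tGL(2n)}_{\tilde\pi}(f) = \Theta^G_\phi(f^G)$, which is precisely assertion (3) of Theorem \ref{prop:LLC-SO}, the normalization of $\tilde\pi$ fixed in Remark \ref{rem:twisted-pi} being designed to agree with Arthur's. Second, local integrability and local constancy on the regular set: $\Lambda$ is handled by Clozel's theorem recalled in \S\ref{sec:twisted-rep}, while $\Lambda_G$ is by construction a finite sum $\sum_{\bar\sigma \in \Pi_\phi}\Theta^G_\sigma$ of ordinary distribution characters on the connected reductive group $G(F)$, each of which enjoys the required properties by Harish-Chandra's theorem; the $\Out_{2n}(G)$-averaging \eqref{eqn:average} preserves them.

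The only modestly subtle point is checking that the extended distribution $\tilde\Lambda_G$ on $G(F)$ agrees, as a function on $G_\text{reg}(F)$, with $\gamma \mapsto \sum_{\bar\sigma \in \Pi_\phi}\Theta^G_\sigma(\gamma)$. This is automatic, since $\Pi_\phi$ is a set of $\Out_{2n}(G)$-orbits of irreducible representations, so any choice of orbit representatives produces a sum that is already $\Out_{2n}(G)$-invariant on $G_\text{reg}(F)$, and the averaging step of \eqref{eqn:average} leaves it unchanged. I expect this book-keeping around $\Out_{2n}(G)$-invariance to be the only place that requires care; with it in place the corollary is a direct consequence of Proposition \ref{prop:dist-identity}.
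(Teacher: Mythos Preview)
Your approach is correct and essentially identical to the paper's: set $\Lambda = \Theta^{\tGL(2n)}_{\tilde\pi}$, set $\Lambda'$ to be $\Theta^G_\phi$ concentrated in the $G$-slot, invoke Theorem \ref{prop:LLC-SO}(3) for the transfer relation, and apply Proposition \ref{prop:dist-identity}. Your additional bookkeeping (enlarging the index set via $\Delta_\lambda(\gamma,\delta)=0$ off $\overline{\Delta}_{2n-\text{reg,ell}}(G)$, and verifying the regularity hypotheses) is correct and more explicit than the paper's two-line proof.

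One small inaccuracy in your final paragraph: the assertion that a sum $\sum_{\bar\sigma \in \Pi_\phi}\Theta^G_\sigma$ with \emph{one} representative per orbit is automatically $\Out_{2n}(G)$-invariant on $G_\text{reg}(F)$ is not justified---if an orbit $\bar\sigma$ has two elements then $\Theta^G_\sigma$ and $\Theta^G_{s\sigma}$ need not coincide as functions. This does not affect your proof, however: by definition $\Theta^G_\phi(\gamma)$ \emph{is} the value of the averaged function $\tilde\Lambda_G = \tfrac{1}{2}\sum_{\bar\sigma}(\Theta^G_\sigma + \Theta^G_{s\sigma})$, so the identification ``$|D'(\gamma)|^{1/2}\Lambda'(\gamma) = S^G(\phi,\gamma)$'' holds tautologically, and the required local integrability and local constancy were already secured in your preceding paragraph.
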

\begin{proof}
  Take $\Lambda' := \Theta_\phi^G$, regarded as a linear functional on $\bigoplus_{G_1} \overline{S\mathcal{I}}(G_1)$ concentrated at the $G$-slot. By Theorem \ref{prop:LLC-SO}, $\Lambda'(f') = \Lambda(f)$ for all $f \in \mathcal{I}(\tGL(2n))$. Now Proposition \ref{prop:dist-identity} can be applied.
\end{proof}
These character identities are independent of choice of Haar measures.

\section{The Goldberg-Shahidi-Spallone formalism}\label{sec:GSS-formalism}
The setting below is modeled upon \cite{Sp11}, corresponding to the special case ``$n=2m$'' (resp. ``$n=2m+1$'') in the terminology of \cite{GS98,GS01} (resp. of \cite{GS09}).

\subsection{Sesquilinear algebra}\label{sec:grouptheory}
Let $E/F$ be a field extension with $[E:F] \leq 2$. As usual, set $\tau$ to be the nontrivial element in $\text{Gal}(E/F)$ if $[E:F]=2$, otherwise $\tau := \identity$. Fix $\epsilon = \pm 1$ and consider a $(E,\tau)$-hermitian space $(V_1,q_1)$ of sign $\epsilon$. There are four cases.
\begin{enumerate}
  \item Even orthogonal case: $E=F$, $\epsilon=1$, $\dim_F V_1 = 2k$ for some $k$.
  \item Odd orthogonal case: $E=F$, $\epsilon=1$, $\dim_F V_1 = 2k+1$ for some $k$.
  \item Symplectic case: $E=F$, $\epsilon=-1$.
  \item Hermitian/anti-hermitian case: $[E:F]=2$.
\end{enumerate}

Assume that there exists a decomposition of $E$-vector spaces
$$ V_1 = H' \oplus V \oplus H $$
such that
\begin{itemize}
  \item $\dim_E H = \dim_E H' = \dim_E V$;
  \item $H,H'$ are totally isotropic in $V_1$;
  \item set $q := q_1|_V$, then $(V,q)$ is a $E/F$-hermitian space of sign $\epsilon$;
  \item $V$ is orthogonal to $H \oplus H'$;
  \item in the even orthogonal case, $(V,q)$ is not isotropic of dimension two.
\end{itemize}
This leads to an identification $H' = H^\vee$, namely an element $v' \in H'$ corresponds to the $E$-linear functional
$$ v \longmapsto q(v'|v), \quad v \in H . $$
Henceforth we abandon the notation $H'$ and write $H^\vee$ instead. Note that for $Y \in \Hom_E(H,H^\vee) \hookrightarrow \End_E(V_1)$, the adjoint map of $Y$ with respect to $q_1$ is then identified with $\epsilon \check{Y}: H \to H^\vee$.

Set $G_1 := \U(V_1,q_1)^\circ$. Given the decomposition above for $V_1$, we define the following subgroups of $G_1$:
\begin{align*}
  P & := \Stab_{G_1} (H^\vee), \\
  P^- & := \Stab_{G_1} (H), \\
  M & := P \cap P^- = \GL_E(H) \times \U(V,q)^\circ.
\end{align*}

The groups $P,P^-$ are maximal parabolic subgroups of $G_1$ with the common Levi component $M$. Write $P=MU$, $P^- = MU^-$ for the corresponding Levi decompositions.

Write $\sigma[q]$ for the element in $\Isom_E(V,V^\vee)$ corresponding to $q$, that is, $\angles{\sigma[q]v,v'}=q(v|v')$ for all $v,v'\in V$. 

\begin{proposition}\label{prop:XY-cond}
  The elements of $U$ are in natural bijection with pairs $(X,Y)$ such that
  \begin{gather*}
    X \in \Hom_E(V,H^\vee), \\
    Y \in \Hom_E(H,H^\vee),
  \end{gather*}
  and
  \begin{gather}\label{eqn:XY-cond}
    Y + \epsilon\check{Y} + X \sigma[q]^{-1} \check{X} = 0 .
  \end{gather}
  More precisely, given $(X,Y)$ as above, set $X' := -\sigma[q]^{-1}\check{X} \in \Hom_E(H,V)$. Then $(X,Y,X')$ defines an element $n(X,Y) \in \End_E(V_1)$ such that $n(X,Y)|_H = Y + X'$, $n(X,Y)|_V = X$, $n(X,Y)|_{H^\vee}=0$. The corresponding element is $u(X,Y) := 1+n(X,Y)$.
\end{proposition}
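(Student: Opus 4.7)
\textbf{Proof plan for Proposition \ref{prop:XY-cond}.}

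The strategy is to unwind the definitions block-by-block. Let us write elements with respect to the decomposition $V_1 = H^\vee \oplus V \oplus H$. Since $(H^\vee)^\perp = H^\vee \oplus V$ (because $H$ is totally isotropic, $V \perp H$, and the pairing between $H$ and $H^\vee$ is non-degenerate), the parabolic $P = \Stab_{G_1}(H^\vee)$ automatically stabilizes the flag $0 \subset H^\vee \subset H^\vee \oplus V \subset V_1$. Hence the unipotent radical $U$ consists of those $u \in G_1$ inducing the identity on each graded piece of this flag. Writing $u = 1 + n$, this is equivalent to saying $n|_{H^\vee} = 0$, $n(V) \subseteq H^\vee$, and $n(H) \subseteq H^\vee \oplus V$. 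Such an $n$ is then uniquely of the shape $n|_V = X$ with $X \in \Hom_E(V,H^\vee)$, and $n|_H = Y + X'$ with $Y \in \Hom_E(H,H^\vee)$, $X' \in \Hom_E(H,V)$. This is the first step, producing the raw parametrization by triples $(X,Y,X')$.

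The second step is to impose the condition $q_1 = u^* q_1$, which is equivalent to the operator identity
\begin{equation*}
  n + n^\star + n^\star n = 0,
\end{equation*}
where $n^\star$ is the sesquilinear adjoint of $n$ with respect to $q_1$. One now evaluates both sides block-by-block. Thanks to the nilpotent structure of $n$, most block components vanish trivially. The two non-trivial conditions arise from the $(V,H)$ block and the $(H,H)$ block:
\begin{itemize}
  \item the $(V,H)$-block of $n + n^\star$ reads $X' + \sigma[q]^{-1}\check{X} = 0$, because the adjoint of $X : V \to H^\vee$ with respect to $q_1$ identifies, via the natural pairing $H \leftrightarrow H^\vee$ and $\sigma[q] : V \rightiso V^\vee$, with the map $\sigma[q]^{-1}\check{X}: H \to V$ (the $n^\star n$ contribution to this block is zero, because $n^\star$ lowers the flag while $n$ raises it, and the composite lands in the wrong blocks). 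This forces $X' = -\sigma[q]^{-1}\check{X}$;
  \item the $(H,H)$-block of $n + n^\star$ contributes $Y + \epsilon\check{Y}$ via the sign $\epsilon$ of $q_1$ (here one uses that ${}^t q = \epsilon q$ translates into $\check{\sigma[q]} = \epsilon \sigma[q]$ under the canonical identification $V^{\vee\vee} = V$), while the $(H,H)$-block of $n^\star n$ equals $\check{X}\sigma[q]^{-1}$-times-$X$ type composition. Substituting $X' = -\sigma[q]^{-1}\check{X}$ one ends up with exactly the relation \eqref{eqn:XY-cond}.
\end{itemize}
Conversely, any pair $(X,Y)$ satisfying \eqref{eqn:XY-cond} gives, together with $X' := -\sigma[q]^{-1}\check{X}$, an operator $u = 1+n(X,Y)$ that preserves $q_1$ by reversing the above calculation, and lies in $U$ by its block shape.

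The only real obstacle is bookkeeping: tracking the hermitian transpose $\check{\cdot}$, the involution $\tau$, the sign $\epsilon$, and the identifications $H^\vee \leftrightarrow H$ (via the pairing) and $V \leftrightarrow V^\vee$ (via $\sigma[q]$) without introducing spurious signs. To minimize risk, I would fix once and for all the block matrix of $q_1$ in the basis $(H^\vee, V, H)$, namely
\begin{equation*}
  Q = \begin{pmatrix} 0 & 0 & I \\ 0 & \sigma[q] & 0 \\ \epsilon I & 0 & 0 \end{pmatrix},
\end{equation*}
write $n^\star = Q^{-1}\,{}^t\!(\tau n)\, Q$, and read off the equations by direct matrix multiplication; this makes the computation a purely mechanical verification, and the emergence of $\epsilon \check{Y}$ and $X\sigma[q]^{-1}\check{X}$ in \eqref{eqn:XY-cond} becomes transparent.
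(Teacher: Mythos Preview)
Your proposal is correct and is the standard block-matrix computation for the unipotent radical of the stabilizer of an isotropic subspace in an $\epsilon$-hermitian space. The paper does not actually give a proof: it simply cites \cite[\S 2.1]{Sp11}, so your explicit argument is more detailed than what appears here and is precisely the kind of verification that reference contains.
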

\begin{proof}
  This is \cite[\S 2.1]{Sp11}.
\end{proof}

Introduce the twisted space $(\GL_E(H),\tGL_E(H))$ of non-degenerate $E/F$-sesquilinear forms on $H$ (recall \S\ref{sec:twisted-eg}).

\begin{lemma}\label{prop:rigidify}
  Let $U'$ be the Zariski open dense subset of $U$ consisting of elements of the form $u(X,Y)$, in the notation of Proposition \ref{prop:XY-cond}, such that $X,Y$ are both invertible. There is a canonical isomorphism between $F$-varieties
  \begin{align*}
    U' & \stackrel{\sim}{\longrightarrow} \left\{ (\delta, \varphi) :
      \begin{array}{l}
        \delta: \in \tGL_E(H), \\
        \varphi \in \Isom_{E,\tau}((H, \delta + \epsilon\cdot {}^t \delta), (V,-\epsilon q)).
      \end{array}
    \right\} \\
    u(X,Y) & \longmapsto \begin{cases}
      \delta :  (v,v') \mapsto \angles{Yv,v'} \\
      \varphi  := \sigma[q]^{-1} \check{X}.
     \end{cases}
  \end{align*}
\end{lemma}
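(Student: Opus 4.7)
The plan is to interpret the constraint \eqref{eqn:XY-cond} in Proposition \ref{prop:XY-cond} as precisely the isometry condition on $\varphi$, so that the asserted bijection becomes tautological once one identifies $Y$ with $\delta$ and repackages $X$ as $\varphi$.

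First I use the identification $\tGL_E(H) = \Isom_E(H,H^\vee)$ from \S\ref{sec:twisted-eg}: under $Y \leftrightarrow q_Y$ with $q_Y(v|v') = \angles{Yv,v'}$, the operator $Y$ of Proposition \ref{prop:XY-cond} corresponds to the sesquilinear form $\delta$, and $Y$ is invertible iff $\delta$ is non-degenerate, i.e.\ $\delta \in \tGL_E(H)$. Unwinding the defining relation $\angles{\check{Y}\check{v}_2, v_1} = \angles{\check{v}_2, Yv_1}$ together with the canonical identification $H = H^{\vee\vee}$ shows that ${}^t\delta$ corresponds to $\check{Y}$; hence $\delta + \epsilon \cdot {}^t\delta$ corresponds to $Y + \epsilon\check{Y}$, which is visibly $(E,\tau)$-hermitian of sign $\epsilon$, matching the sign of the target form $-\epsilon q$.

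Next I analyze $\varphi := \sigma[q]^{-1}\check{X} : H \to V$. Since $\sigma[q]$ is always invertible, $\varphi$ is bijective iff $X$ is. The hermitian symmetry ${}^t q = \epsilon q$ translates into $\check{\sigma[q]} = \epsilon\sigma[q]$, and hence $\check{\sigma[q]^{-1}} = \epsilon\sigma[q]^{-1}$. Therefore
\[
  \check{\varphi} \;=\; \check{\check{X}}\cdot\check{\sigma[q]^{-1}} \;=\; \epsilon\,X\sigma[q]^{-1},
\]
and the operator on $H$ representing the pullback form $\varphi^*(-\epsilon q)$ is
\[
  -\epsilon\,\check{\varphi}\,\sigma[q]\,\varphi \;=\; -\epsilon\cdot\epsilon X\sigma[q]^{-1}\cdot\sigma[q]\cdot\sigma[q]^{-1}\check{X} \;=\; -X\sigma[q]^{-1}\check{X}.
\]
By \eqref{eqn:XY-cond} this equals $Y + \epsilon\check{Y}$, i.e.\ the operator representing $\delta + \epsilon\cdot{}^t\delta$. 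Thus $\varphi$ is automatically an isometry of the claimed forms whenever it is a linear isomorphism, and the assignment $u(X,Y) \mapsto (\delta,\varphi)$ takes $U'$ into the target set.

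For the inverse, given $(\delta,\varphi)$ I take $Y$ to be the operator associated to $\delta$ and set $X := \epsilon\,\check{\varphi}\,\sigma[q]$, so that $\check{X} = \sigma[q]\varphi$; the same computation read backwards shows that the isometry condition on $\varphi$ forces \eqref{eqn:XY-cond}, and invertibility of $X$ and $Y$ follows from that of $\varphi$ and the non-degeneracy of $\delta$. The only step demanding any care is the bookkeeping for $\check{\cdot}$ — chiefly the identity $\check{\sigma[q]^{-1}} = \epsilon\sigma[q]^{-1}$ and the compatibility of $H = H^{\vee\vee}$ with the $\tau$-twisted pairing; once these signs are in place, the lemma reduces to a direct manipulation of the constraint in Proposition \ref{prop:XY-cond}.
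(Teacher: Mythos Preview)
Your proof is correct and takes essentially the same approach as the paper: both arguments boil down to showing that the constraint \eqref{eqn:XY-cond} is equivalent to $\varphi = \sigma[q]^{-1}\check{X}$ being an isometry from $(H,\delta+\epsilon\cdot{}^t\delta)$ to $(V,-\epsilon q)$. The paper carries out this verification element-by-element via a chain of pairing identities $\angles{(Y+\epsilon\check{Y})v,v'} = \cdots = -\epsilon q(\varphi v|\varphi v')$, whereas you work at the operator level (computing $\check{\varphi}$ and the pullback operator directly); your version also spells out the inverse map explicitly, which the paper leaves implicit.
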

\begin{proof}
  By recalling our identification $H = H^{\vee\vee}$ and the definition of ${}^t \delta$, one sees that ${}^t \delta (v|v') = \tau\angles{Yv',v} = \angles{\check{Y}v, v'}$. By the same reasoning, one can rewrite \eqref{eqn:XY-cond} as
  \begin{align*}
    \angles{(Y+\epsilon\hat{Y})v, v'} & = -\angles{X \sigma[q]^{-1} \check{X} v, v'} \\
    & = -\tau\angles{\check{X}v', \sigma[q]^{-1} \check{X}v} \\
    & = -\tau\angles{\sigma[q] \varphi v', \varphi v} \\
    & = -\tau q(\varphi v'|\varphi v) = -\epsilon q(\varphi v|\varphi v')
  \end{align*}
  for all $v,v' \in H$. The invertibility of $Y+\epsilon\check{Y}$ follows automatically. This is enough to complete the proof.
\end{proof}

\subsection{GS-norms}
\begin{definition}
  Let $(X,Y)$ be as in Proposition \ref{prop:XY-cond} such that $X,Y$ are both invertible. Define the Goldberg-Shahidi norm (abbreviated as GS-norm) by
  \begin{gather}\label{eqn:GS-norm}
    \text{Norm}(X,Y) := 1 + \sigma[q]^{-1} \check{X} Y^{-1} X \in \End_E(V).
  \end{gather}
  We also regard $\text{Norm}$ as a morphism $U' \to \End_E(V)$.
\end{definition}

We will identify $\Isom_E(H,H^\vee)$ and $\tGL_E(H)$ by sending $Y: H \rightiso H^\vee$ to the sesquilinear form $\delta_Y:(v,v') \mapsto \angles{Yv,v'}$.

\begin{theorem}[{\cite[\S 3, \S 5]{Sp11}}]\label{prop:Norm}
  The morphism $\mathrm{Norm}$ has the following properties.
  \begin{enumerate}
    \item The image of $\mathrm{Norm}$ is $\{\gamma \in \U(V,q):  \det(\gamma-1) \neq 0 \}$; this subset is contained in ${-1 \cdot \U(V,q)^\circ}$ if $(V,q)$ is odd orthogonal, otherwise it is contained in $\U(V,q)^\circ$.
    \item Let $(X,Y)$ be as in Proposition \ref{prop:XY-cond}, $g \in \GL_E(V)$, then $(gX, gY\check{g})$ also satisfies \eqref{eqn:XY-cond}, and
    $$ \mathrm{Norm}(gX, gY\check{g}) = \mathrm{Norm}(X,Y). $$
    \item Let $X \in \Isom_E(V,H^\vee)$, then the morphism
    \begin{gather}\label{eqn:section}
      \gamma \longmapsto \delta_X(\gamma) := X(\gamma-1)^{-1}\sigma[q]^{-1}\check{X} \in \Isom_E(H,H^\vee)
    \end{gather}
    is a section of $\mathrm{Norm}$.
    \item $\mathrm{Norm}$ induces a surjection
    $$\xymatrix{
      \GL_E(H) \backslash \{ \delta \in \tGL_E(H) : \delta=\delta_Y \text{ for some } (X,Y) \text{ from } U'  \} \ar@{->>}[d] & \ni & (X,Y) \ar@{|->}[d] \\
      \U(V,q) \backslash \{ \gamma \in \U(V,q):  \det(\gamma-1) \neq 0 \} & \ni & \mathrm{Norm}(X,Y)
    }$$
    where $\GL_E(V)$ (resp. $\U(V,q)$) acts by conjugation.
  \end{enumerate}
\end{theorem}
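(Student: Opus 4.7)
All four assertions reduce to algebraic identities in the hermitian-transpose calculus on $V_1 = H^\vee \oplus V \oplus H$. My plan is to reduce each part to a direct substitution using the contravariance of $\check{(\cdot)}$, the double-dual identification $\check{\check{X}} = X$, the relation $\check{\sigma[q]} = \epsilon\sigma[q]$ (a restatement of ${}^t q = \epsilon q$), and the defining relation \eqref{eqn:XY-cond}.

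I would begin with (2), which is a pure substitution: contravariance gives that the hermitian transpose of $gY\check{g}$ is $g\check{Y}\check{g}$, so \eqref{eqn:XY-cond} rescales to $g(Y + \epsilon\check{Y} + X\sigma[q]^{-1}\check{X})\check{g} = 0$, and the factors telescope:
$$ \sigma[q]^{-1}\check{X}\check{g}\cdot \check{g}^{-1}Y^{-1}g^{-1}\cdot gX = \sigma[q]^{-1}\check{X}Y^{-1}X. $$
I would then turn to (3). For $Y := X(\gamma - 1)^{-1}\sigma[q]^{-1}\check{X}$, inverting $Y$ gives $\sigma[q]^{-1}\check{X}Y^{-1}X = \gamma - 1$ immediately, so $\text{Norm}(X, Y) = \gamma$. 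The subtler step is verifying \eqref{eqn:XY-cond}. Using $\gamma \in \U(V, q)$ in the form $\check{\gamma}\sigma[q]\gamma = \sigma[q]$, the hermitian transpose of $(\gamma - 1)^{-1}$ equals $-\sigma[q](\gamma - 1)^{-1}\gamma\,\sigma[q]^{-1}$ (via $(\gamma^{-1} - 1)^{-1} = -(\gamma-1)^{-1}\gamma$); combining with $\check{\sigma[q]} = \epsilon\sigma[q]$ yields $\epsilon\check{Y} = -X(\gamma - 1)^{-1}\gamma\sigma[q]^{-1}\check{X}$, and hence $Y + \epsilon\check{Y} = X(\gamma-1)^{-1}(1 - \gamma)\sigma[q]^{-1}\check{X} = -X\sigma[q]^{-1}\check{X}$, as required.

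For (1), unitarity of $\gamma := 1 + \sigma[q]^{-1}\check{X}Y^{-1}X$ would come from expanding $\check{\gamma}\sigma[q]\gamma$: the cross terms collapse to $\check{X}[Y^{-1} + \epsilon\check{Y}^{-1}(1 + X\sigma[q]^{-1}\check{X}Y^{-1})]X$, and substituting $X\sigma[q]^{-1}\check{X} = -Y - \epsilon\check{Y}$ from \eqref{eqn:XY-cond} gives $1 + X\sigma[q]^{-1}\check{X}Y^{-1} = -\epsilon\check{Y}Y^{-1}$, making the bracket vanish. The image is then precisely $\{\gamma \in \U(V, q) : \det(\gamma - 1) \neq 0\}$, one inclusion being obvious and the other supplied by (3). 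For the connected-component statement I would invoke Sylvester's identity and \eqref{eqn:XY-cond} to compute
$$ \det\gamma = \det(1 + Y^{-1}X\sigma[q]^{-1}\check{X}) = \det(-\epsilon Y^{-1}\check{Y}) = (-\epsilon)^{\dim_E H}\det(\check{Y})/\det(Y), $$
which in each of the four cases pins $\det\gamma$ to the stated component (noting $\det\check{Y} = \det Y$ when $E = F$, together with $\dim V = \dim H$ in the orthogonal cases, and the connectedness of $\U(V,q)$ in the symplectic and hermitian cases). Finally (4) follows: given $(X_1, Y_1), (X_2, Y_2) \in U'$ with $Y_1, Y_2$ in the same $\GL_E(H)$-orbit, (2) reduces the problem to $Y_1 = Y_2 =: Y$, and then $X_1\sigma[q]^{-1}\check{X}_1 = X_2\sigma[q]^{-1}\check{X}_2$ forces $X_2 = X_1 h^{-1}$ for some $h \in \U(V, q)$; a short computation using $\check{h} = \sigma[q]h^{-1}\sigma[q]^{-1}$ then yields $\text{Norm}(X_1 h^{-1}, Y) = h \cdot \text{Norm}(X_1, Y) \cdot h^{-1}$. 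Surjectivity is immediate from (1).

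The main obstacle I anticipate is sign tracking in the hermitian-transpose calculus, particularly at two spots where a misplaced $\epsilon$ would derail the argument: the transposition of $(\gamma - 1)^{-1}$ in (3), where unitarity must insert the factor $-\gamma$ via $(\gamma^{-1} - 1)^{-1} = -(\gamma - 1)^{-1}\gamma$; and the hermitian transpose of $A := \sigma[q]^{-1}\check{X}Y^{-1}X$ in (1), where the single factor of $\epsilon$ coming from $\check{\sigma[q]} = \epsilon\sigma[q]$ is precisely what causes the bracket to collapse.
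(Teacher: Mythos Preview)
Your proposal is correct: each of the four identities checks out under the hermitian-transpose calculus you set up, and the sign-tracking you flag as the main obstacle is handled accurately (in particular $\check{\sigma[q]}=\epsilon\sigma[q]$, $(\check{\gamma}-1)^{-1}=-\sigma[q](\gamma-1)^{-1}\gamma\,\sigma[q]^{-1}$, and the collapse of the bracket in (1) via $1+X\sigma[q]^{-1}\check{X}Y^{-1}=-\epsilon\check{Y}Y^{-1}$ are all right).

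The paper itself does not give a proof: it simply records that the assertions are proved in \cite{Sp11} (building on \cite{GS98,GS01,GS09}) in a coordinate-free manner, and only adds the remark that the $\GL_E(H)$-action in (4) is well-defined by (2) together with \eqref{eqn:Isom-bitorsor}. Your approach therefore differs from the paper's in that you supply a complete, self-contained verification rather than an appeal to the literature. What this buys you is independence from the cited sources and a transparent account of exactly where the hermitian sign $\epsilon$ enters; what the paper's approach buys is brevity, since the theorem is attributed and the reader is referred to the coordinate-free treatment already available in \cite{Sp11}. One small caution: the statement in the paper has $g\in\GL_E(V)$ in (2), which as written does not compose with $X\colon V\to H^\vee$; your computation implicitly (and correctly) treats $g$ as acting on $H^\vee$, consistent with how the $\GL_E(H)$-action in (4) must arise via \eqref{eqn:Isom-bitorsor}.
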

Let $\delta \in \tGL_E(H)$, $\gamma \in \U(V,q)$, we will write
$$ \delta \GSnorm \gamma $$
if their conjugacy classes correspond as in the last assertion.
\begin{proof}
  These assertions are essentially due to \cite{GS98,GS01,GS09} and proved in \cite{Sp11} using a coordinate-free approach. Note that in the last assertion, the $\GL_E(H)$-action is well-defined by the second assertion and \eqref{eqn:Isom-bitorsor}.
\end{proof}

The crux is to compare the GS-norm with the correspondence of conjugacy classes in twisted endoscopy. This is done in \cite{GS98,GS01,GS09} using the language of \cite{KS}, in a broader context. Our concern here is to describe $\text{Norm}$ directly in terms of twisted spaces and the parametrization in \S\ref{sec:classes}. This will also give a transparent explanation for Theorem \ref{prop:Norm}, at least for very regular semisimple classes.

First of all, as the map in the last assertion of Theorem \ref{prop:Norm} is obtained from a morphism between $F$-varieties, we actually get a map between stable conjugacy classes. Let $\delta \in \tGL_E(H)$ be very regular semisimple. We invoke the parametrization in \S\ref{sec:classes}.
\begin{enumerate}
  \item In the odd orthogonal case, the stable conjugacy class of $\delta$ is parametrized by a quadruplet $(L,L_\pm,x,x_D)$;
  \item In the remaining cases, the stable conjugacy class of $\delta$ is parametrized by a triplet $(L,L_\pm,x)$.
\end{enumerate}

\begin{lemma}\label{prop:GS-correspondence}
  Let $\delta \in \tGL_E(H)$ be very regular semisimple, such that $\delta=\delta_Y$ for some $(X,Y)$ satisfying \eqref{eqn:XY-cond}. Assume that the stable conjugacy class of $\delta$ is parametrized by $(L,L_\pm,x)$ (resp. $(L,L_\pm,x,x_D)$ in the odd orthogonal case).
  \begin{enumerate}
    \item In the odd orthogonal case, suppose that $(-1) \cdot \mathrm{Norm}(X,Y)$ is very regular semisimple in $\SO(V,q)$, then its stable conjugacy class is parametrized by $(L,L_\pm,y)$ where
      $$ y = \frac{\tau(x)}{x}; $$
    \item In the remaining cases, suppose that $\mathrm{Norm}(X,Y)$ is very regular semisimple in $\U(V,q)^\circ$, then its stable conjugacy class (up to $\Or(V,q)$ in the even orthogonal case) is parametrized by $(L,L_\pm,y)$ where
      $$ y = -\frac{\epsilon\tau(x)}{x}. $$
  \end{enumerate}
\end{lemma}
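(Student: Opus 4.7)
The plan is to reduce the statement to a linear-algebra identification on $H$ via the rigidification Lemma \ref{prop:rigidify}, then use the explicit parametrization of $\delta$ in terms of the étale algebra $L$. First I would write $\delta = \delta_Y$ and introduce $\varphi := \sigma[q]^{-1}\check{X}\colon H \rightiso V$, which by Lemma \ref{prop:rigidify} is an isometry $(H, \delta + \epsilon \cdot {}^t\delta) \to (V, -\epsilon q)$. Since $\mathrm{Norm}(X,Y) = 1 + \varphi Y^{-1}X$ in $\End_E(V)$, conjugating by $\varphi$ gives
$$ \varphi^{-1} \mathrm{Norm}(X,Y) \varphi = 1 + Y^{-1} X \sigma[q]^{-1} \check{X} = 1 - Y^{-1}(Y + \epsilon \check{Y}) = -\epsilon\, Y^{-1} \check{Y}, $$
where the middle equality is \eqref{eqn:XY-cond}. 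It therefore suffices to identify $-\epsilon Y^{-1}\check{Y}$ as an element of $\U(H, \delta + \epsilon \cdot {}^t\delta)$.

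Next, I would fix an $E$-linear identification $H \simeq L$ for which $\delta(v|v') = \Tr_{L/E}(\tau(v)\, v'\, x)$. A short computation, using $\tau \circ \Tr_{L/E} = \Tr_{L/E} \circ \tau$, shows that ${}^t\delta$ is the sesquilinear form parametrized by $\tau(x)$, whence the symmetrization $\delta + \epsilon \cdot {}^t\delta$ is the hermitian form $q_c$ on $L$ with $c = x + \epsilon \tau(x)$; the identity $\tau(c) = \epsilon c$ is automatic, matching the constraint on the auxiliary parameter $c$ attached to classes in $\U(V,q)$. To compute $Y^{-1}\check{Y}$, I would introduce the $\tau$-twisted trace map $\kappa\colon L \to L^\vee$ defined by $\kappa(w)(u) := \Tr_{L/E}(\tau(w) u)$, and verify from definitions that $Y = \kappa \circ m_{\tau(x)}$ and $\check{Y} = \kappa \circ m_x$, where $m_a$ denotes multiplication by $a \in L$. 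Hence $Y^{-1}\check{Y} = m_{x/\tau(x)}$, and $-\epsilon Y^{-1}\check{Y}$ is multiplication by $-\epsilon x/\tau(x) \in L^1$ on $L$.

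In all but the odd orthogonal case this element is the parameter $y$ of the class of $\mathrm{Norm}(X,Y)$; since the parametrization of very regular classes is intrinsically defined only up to automorphisms of the pair $(L, L_\pm)$, in particular up to $y \leftrightarrow \tau(y) = y^{-1}$, the answer agrees with the stated $y = -\epsilon \tau(x)/x$. In the odd orthogonal case, the extra datum $x_D$ yields a decomposition $H \simeq L \oplus F$ with $\delta = q_x \oplus \angles{x_D}$; the restriction of $Y$ to the $F$-summand is a nonzero scalar, so $Y^{-1}\check{Y}$ acts as $1$ there, and $\mathrm{Norm}(X,Y)$ acts as $-1$ on this line and as $m_{-x/\tau(x)}$ on $L$. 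Multiplying by $-1$ then gives the identity on the line, which furnishes the $+1$-eigenline corresponding to the $\angles{a}$-summand in the parametrization of $\SO(V,q)$, together with multiplication by $x/\tau(x)$ on $L$, i.e.\ the asserted $y$ up to the same $\tau$-ambiguity.

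The main obstacle I anticipate is the careful bookkeeping of the hermitian dualities $(\cdot)^\vee$, $\check{(\cdot)}$ and the canonical identification $L \simeq L^{\vee\vee}$ via $v \mapsto \tau\angles{\cdot,v}$: in the presence of $\tau$, several occurrences of $x$ versus $\tau(x)$ swap in intermediate steps, and one can easily end up with $y^{\pm 1}$ or introduce stray signs without care. Once $\kappa$ and its interaction with $Y$, $\check{Y}$ are pinned down, the remainder of the argument is short and direct.
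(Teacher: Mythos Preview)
Your proof is correct and follows essentially the same strategy as the paper: identify $H$ with $L$ via the trace pairing, transport $\mathrm{Norm}(X,Y)$ to an explicit multiplication operator on $L$ using Lemma \ref{prop:rigidify} and the relation \eqref{eqn:XY-cond}, and read off the parameter $y$. The paper packages this into a single commutative diagram via the map $\varphi_L \circ X : V \to L$ and lands directly on $y = -\epsilon\,\tau(x)/x$, whereas your cleaner preliminary reduction $\varphi^{-1}\,\mathrm{Norm}(X,Y)\,\varphi = -\epsilon\,Y^{-1}\check{Y}$ (via $\varphi : H \to V$) yields $-\epsilon\,x/\tau(x)$; as you note, the two differ by the automorphism $\tau$ of $(L,L_\pm)$ and hence parametrize the same stable class.
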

Since very regular semisimple elements form Zariski open dense subsets, the condition in this lemma holds generically.
\begin{proof}
  We start from the second case. Implicit in the parametrization of $\delta$ is an identification $H = L$ as $E$-vector spaces. We deduce an isomorphism $\varphi_L: H^\vee = L^\vee \rightiso L$, characterized by
  \begin{gather}\label{eqn:varphi_L}
    \angles{\check{v},v} = \Tr_{L/E}(\tau(\varphi_L \check{v})v), \quad \check{v} \in L^\vee, v \in L .
  \end{gather}

  Since $\delta$ is very regular, we have $\delta + \epsilon \cdot {}^t \delta \in \tGL_E(H)$. Moreover, Remark \ref{rem:transposed-form} affirms that $\delta + \epsilon \cdot {}^t \delta \in \tGL_E(H)$ is parametrized by $(L,L_\pm,x+\epsilon\tau(x))$. This element corresponds to $Y + \epsilon\check{Y} \in \Isom_E(H,H^\vee)$. Hence we have the following commutative diagram with invertible arrows.
  $$\xymatrix@R=5em@C=5em{
    V \ar[r]^{\check{X}Y^{-1}X} \ar[d]_{X} & V^\vee \ar[r]^{\sigma[q]^{-1}} & V \ar[d]^X \\
    H^\vee \ar[d]_{\varphi_L} \ar@<0.7ex>[r]^{Y^{-1}} & H \ar[u]^{\check{X}} \ar[r]^{-(Y+\epsilon \check{Y})} \ar@<0.7ex>[l]^{Y} & H^\vee \ar[d]^{\varphi_L} \\
    L \ar@<0.7ex>[r]^{x^{-1} \cdot} & L \ar@{=}[u] \ar[r]^{-(x+\epsilon\tau(x)) \cdot} \ar@<0.7ex>[l]^{x \cdot} & L
  }$$
  where we have used \eqref{eqn:XY-cond}. In view of the definition of GS-norms in \eqref{eqn:GS-norm}, we get the commutative diagram
  $$\xymatrix@R=3em@C=7em{
    V \ar[r]^{\text{Norm}(X,Y)} \ar[d]_{\varphi_L \circ X} & V \ar[d]^{\varphi_L \circ X} \\
    L \ar[r]_{(1 - x^{-1}(x+\epsilon\tau(x))) \cdot } & L
  }.$$
  We have $1-x^{-1}(x+\epsilon\tau(x)) = -\epsilon x^{-1}\tau(x) = y$. The parametrization in \S\ref{sec:classes} says that $(L,L_\pm,y)$ parametrizes $\text{Norm}(X,Y)$.

  Consider the odd orthogonal case now. We have $E=F$, $\epsilon=1$ and an identification $H=L \oplus F$ as $F$-vector spaces. In this case, we deduce an isomorphism of $F$-vector spaces
  $$ \varphi_L = (\varphi_L^1, \varphi_L^2): H^\vee = L^\vee \oplus F^\vee \rightiso L \oplus F,$$
  where $\varphi_L^1: L^\vee \rightiso L$ is defined by \eqref{eqn:varphi_L} and $\varphi_L^2: F^\vee \rightiso F$, or rather its inverse, corresponds to the quadratic form $x \mapsto x^2$ on $F$.

  As before, we obtain a commutative diagram
  $$\xymatrix@R=5em@C=7em{
    V \ar[r]^{\check{X}Y^{-1}X} \ar[d]_{X} & V^\vee \ar[r]^{\sigma[q]^{-1}} & V \ar[d]^X \\
    H^\vee \ar[d]_{\varphi_L} \ar@<0.7ex>[r]^{Y^{-1}} & H \ar[u]^{\check{X}} \ar[r]^{-(Y+\check{Y})} \ar@<0.7ex>[l]^{Y} & H^\vee \ar[d]^{\varphi_L} \\
    L \oplus F \ar@<0.7ex>[r]^{(x^{-1},x_D^{-1}) \cdot} & L \oplus F \ar@{=}[u] \ar[r]^{(-(x+\tau(x)), -2x_D) \cdot} \ar@<0.7ex>[l]^{(x,x_D) \cdot} & L \oplus F
  }.$$
  Hence
  $$\xymatrix@R=3em@C=7em{
    V \ar[r]^{\text{Norm}(X,Y)} \ar[d]_{\varphi_L \circ X} & V \ar[d]^{\varphi_L \circ X} \\
    L \oplus F \ar[r]_{(-x^{-1}\tau(x),-1) \cdot } & L \oplus F
  }.$$
  To conclude, it suffices to note that $(-x^{-1}\tau(x),-1)=-(y,1)$.
\end{proof}

\begin{corollary}\label{prop:GS-ellipticity}
  Suppose $\delta \in \tGL_E(H)$, $\gamma \in \U(V,q)^\circ$ (resp. $-\gamma \in \U(V,q)^\circ$ in the odd orthogonal case) are both very regular semisimple. If $\delta \GSnorm \gamma$, then $\tGL_E(H)_\delta \simeq \U(V,q)_\gamma$. In particular, $\delta$ is elliptic if and only if $\gamma$ (resp. $-\gamma$ in the odd orthogonal case) is.
\end{corollary}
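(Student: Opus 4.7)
The plan is to read off both centralizers from the parametrization of very regular classes. By Lemma~\ref{prop:GS-correspondence}, the stable class of $\delta$ in $\tGL_E(H)$ and the stable class of $\gamma$ (resp.\ of $-\gamma$ in the odd orthogonal case) in $\U(V,q)^\circ$ share the same pair of étale $F$-algebras $(L,L_\pm)$. Applying Proposition~\ref{prop:centralizer-desc} on both sides then yields canonical isomorphisms of $F$-tori
\[
  \tGL_E(H)_\delta \;\simeq\; L^1 \;\simeq\; \U(V,q)_\gamma,
\]
where $L^1 = \{a \in L^\times : a\tau(a)=1\}$. In the odd orthogonal case, $-1$ is central in $\GL_E(V)$, whence $\U(V,q)_\gamma = \U(V,q)_{-\gamma}$, so Proposition~\ref{prop:centralizer-desc} does apply to the very regular element $-\gamma$ with the same answer $L^1$. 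This settles the first assertion.

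For the ellipticity claim, I would verify case by case that in each of the four situations the maximal $F$-split central torus $A_{\tilde{G}}$ (resp.\ $A_{\U(V,q)^\circ}$) is trivial. Indeed, $Z_{\tilde{G}} = Z_{\GL_E(H)}^\theta$ is $E^1$ when $[E:F]=2$ and $\mu_2$ when $E=F$; in either case it contains no nontrivial $F$-split subtorus, and the analogous verification for $Z_{\U(V,q)^\circ}$ is routine (orthogonal: finite, symplectic: $\mu_2$, unitary: $E^1$). Consequently $F$-ellipticity of $\delta$ (resp.\ of $\gamma$) reduces on both sides to the triviality of the maximal $F$-split subtorus of the abstract $F$-torus $L^1$, a condition depending only on $(L,L_\pm)$. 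Since the twisted maximal torus $\tilde{T}$ through $\delta$ satisfies $T_\theta = (T^\theta)^\circ$ of finite index in $T^\theta$, the maximal split subtorus of $T^\theta = G^\delta$ coincides with that of $T_\theta = G_\delta \simeq L^1$, and the equivalence of elliptic with elliptic follows.

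The whole argument is a bookkeeping exercise once Lemma~\ref{prop:GS-correspondence} is in hand. The only mild subtlety is the passage between $T^\theta$ and $T_\theta$ on the twisted side, which is harmless since the two have the same maximal $F$-split subtorus. No substantive obstacle arises.
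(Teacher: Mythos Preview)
Your proof is correct and follows essentially the same route as the paper: both invoke Lemma~\ref{prop:GS-correspondence} to show that $\delta$ and $\gamma$ (or $-\gamma$) share the same pair $(L,L_\pm)$, apply Proposition~\ref{prop:centralizer-desc} to identify both centralizers with $L^1$, and then deduce the ellipticity equivalence from the anisotropy of the relevant central tori. Your version merely spells out a few details (the odd orthogonal centrality of $-1$, the $T^\theta$ versus $T_\theta$ point) that the paper leaves implicit.
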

\begin{proof}
  The first assertion follows from the Proposition \ref{prop:centralizer-desc} and Lemma \ref{prop:GS-correspondence}. The second assertion follows from the easy fact that the identity components of $Z_{\U(V,q)^\circ}$ and $Z_{\tGL_E(V)}$ are both anisotropic.
\end{proof}

\begin{corollary}\label{prop:GS-KS}
  In the even orthogonal case, suppose $\delta \in \tGL(H)$, $\gamma \in \SO(V,q)$ are very regular semisimple. If $\delta \GSnorm \gamma$, then $\delta$ corresponds to $\gamma^{-1}$ (or equivalently $\gamma$) in the sense of twisted endoscopy for $\tGL(H)$.
\end{corollary}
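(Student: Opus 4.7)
The plan is to compare the parameters on both sides directly. Let $\delta \in \tGL(H)$ have parameter $(L, L_\pm, x)$ as in \S\ref{sec:classes}. Since we are in the even orthogonal case, $\epsilon = 1$, and Lemma \ref{prop:GS-correspondence} (second case) applies: the stable conjugacy class of $\gamma = \mathrm{Norm}(X,Y)$ modulo $\Or(V,q)$ is parametrized by $(L, L_\pm, y)$ with
\[
  y = -\frac{\tau(x)}{x}.
\]

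Next I would compute the parameter of $\gamma^{-1}$. Under the realization of the centralizer as $L^1 = \{a \in L^\times : a\tau(a)=1\}$ from Proposition \ref{prop:centralizer-desc}, inversion on $L^1$ coincides with $\tau$, so $\gamma^{-1}$ is parametrized by $(L, L_\pm, y^{-1})$ with
\[
  y^{-1} = \tau(y) = -\frac{x}{\tau(x)}.
\]
Now invoke Definition \ref{def:correspondence} for the simple endoscopic datum $(2n,0,\chi)$ with endoscopic group $\SO(V,q)$: $\delta$ corresponds to an element with parameter $(L, L_\pm, y')$ exactly when $x/\tau(x) = -y'$. Taking $y' := y^{-1}$, we find $-y' = x/\tau(x)$, which is precisely the condition. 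Hence $\delta \leftrightarrow \gamma^{-1}$ in the twisted endoscopic sense.

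For the parenthetical remark, I would observe that the involution $\tau \in \mathrm{Aut}(L/L_\pm)$ is an automorphism of the pair $(L,L_\pm)$ sending $y$ to $\tau(y) = y^{-1}$. Since the parametrization of very regular stable classes up to $\Or(V,q)$ identifies data $(L,L_\pm,y)$ only up to isomorphism of such pairs, the classes attached to $y$ and to $y^{-1}$ coincide. Equivalently, at the geometric level, $\tau$ acts on the quadratic space $(L, q_c)$ preserving $q_c$ (a direct computation using $\Tr_{L/F}(\tau(v)v'c) = \Tr_{L/F}(v \tau(v')\tau(c))$ with $c \in L_\pm$), and conjugation by this element of $\Or(V,q)$ sends multiplication-by-$y$ to multiplication-by-$\tau(y)=y^{-1}$. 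Thus $\gamma$ and $\gamma^{-1}$ represent the same class in $\overline{\Delta}_{\mathrm{reg}}(\SO(V,q))$, so the statement $\delta \leftrightarrow \gamma^{-1}$ is the same as $\delta \leftrightarrow \gamma$.

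There is no substantive obstacle: the proof is a two-line book-keeping exercise granted Lemma \ref{prop:GS-correspondence} and Definition \ref{def:correspondence}. The only subtle point is the discrepancy $y \mapsto y^{-1}$, which is invisible from the endoscopic side thanks to the $\Aut(L/L_\pm)$-action on parameters, but would be visible if one insisted on tracking $\SO$-conjugacy classes rather than classes up to $\Or$.
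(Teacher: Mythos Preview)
Your proof is correct and follows the same approach as the paper's: compare the parameter $y=-\tau(x)/x$ from Lemma \ref{prop:GS-correspondence} with the condition $x/\tau(x)=-y'$ in Definition \ref{def:correspondence}, and then observe that $(L,L_\pm,y)$ and $(L,L_\pm,\tau(y))=(L,L_\pm,y^{-1})$ are equivalent via $\tau$. The paper's proof is just the two-line version of exactly this computation; your additional geometric remark that $\tau$ lies in $\Or(L,q_c)$ and conjugates multiplication-by-$y$ to multiplication-by-$y^{-1}$ is a correct elaboration of the paper's phrase ``equivalent via $\tau$''.
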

\begin{proof}
  Compare Lemma \ref{prop:GS-correspondence} and Definition \ref{def:correspondence}. Note that $\gamma$ and $\gamma^{-1}$ are stably conjugate up to $\Or(V,q)$. Indeed, their parameters $(L,L_\pm,y)$, $(L,L_\pm,\tau(y))$ are equivalent via $\tau$.
\end{proof}

\subsection{Transfer factor in the even orthogonal case}\label{sec:Delta-formula2}
We rejoin the setting of \S\ref{sec:Delta-formula1} in this subsection, namely we consider an $F$-quadratic space $(V,q)$ such that $\dim_F V=2n=\dim_F H$ and $\SO(V,q)$ is quasisplit. Regard $\SO(V,q)$ as an endoscopic group of $\tGL(H)$ for some $(2n,0,\chi) \in \mathcal{E}_\text{sim}(2n)$. Choose a basis for $H$ and fix the $\theta$-stable Whittaker datum $(B,\lambda)$ of $\GL(H)$ as in \S\ref{sec:Delta-formula1}, by which the Whittaker-normalized transfer factor $\Delta_\lambda$ is defined.

As usual, we set $K=F \times F$ if $\chi=1$, otherwise $K$ is defined to be the quadratic extension attached to $\chi$. In either case, $x \mapsto N_{K/F}(x)$ denotes the norm form on $K$. Therefore we can write
\begin{gather}\label{eqn:V-decomposition}
  (V,q) \simeq (n-1)\Hy \oplus cN_{K/F}
\end{gather}
for some $c \in F^\times/F^{\times 2}$.

\begin{proposition}\label{prop:Delta-formula-GS}
  Let $\delta \in \tGL(H)$, $\gamma \in \SO(V,q)$ be both very regular semisimple. If $\delta \GSnorm \gamma$ as in Theorem \ref{prop:Norm}, then
  $$ \Delta_\lambda(\gamma^{-1},\delta) = \gamma_{\psi_F}(2(-1)^n q). $$
\end{proposition}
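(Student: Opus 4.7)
The plan is to combine the identifications already established in the paper — the twisted endoscopic correspondence attached to GS-norm (Corollary \ref{prop:GS-KS}), the explicit formula for the unnormalized transfer factor (Corollary \ref{prop:Delta-formula-gen}), and the sesquilinear rigidification (Lemma \ref{prop:rigidify}) — and then to dispatch the remaining sign by a Weil-index computation.

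First I would use Corollary \ref{prop:GS-KS} to translate $\delta \GSnorm \gamma$ into the twisted endoscopy correspondence $\delta \leftrightarrow \gamma^{-1}$, so that Corollary \ref{prop:Delta-formula-gen} applies and gives
\[
  \Delta(\gamma^{-1},\delta)=\begin{cases} +1, & (F^{2n},q_\delta)\stackrel{\text{Witt}}{\sim}(-1)^{n}N_{K/F},\\ -1, & \text{otherwise.}\end{cases}
\]
Next, since $\epsilon=1$ in the even orthogonal case, Lemma \ref{prop:rigidify} applied to a pair $(X,Y)$ giving $\delta=\delta_Y$ and $\gamma=\mathrm{Norm}(X,Y)$ furnishes an isometry $(H,\delta+{}^t\delta)\simeq (V,-q)$. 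Dividing by $2$, this identifies $(F^{2n},q_\delta)$ with $(V,-q/2)$. Combined with the decomposition \eqref{eqn:V-decomposition} and the invariance $\alpha\Hy\simeq\Hy$ for $\alpha\in F^\times$, we get
\[
  (F^{2n},q_\delta)\stackrel{\text{Witt}}{\sim} -\tfrac{c}{2}\,N_{K/F}.
\]

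Then I would insert the Whittaker normalization. By Definition \ref{def:Whittaker-normalization} and Remark \ref{rem:epsilon-Weil},
\[
  \Delta_\lambda(\gamma^{-1},\delta)=\gamma_{\psi_F}(N_{K/F})^{-1}\,\Delta(\gamma^{-1},\delta).
\]
On the target side, expanding $2(-1)^nq$ with \eqref{eqn:V-decomposition} and using that $\gamma_{\psi_F}$ is Witt-invariant, multiplicative on orthogonal sums, and trivial on hyperbolic planes, one obtains
\[
  \gamma_{\psi_F}(2(-1)^n q)=\gamma_{\psi_F}\!\bigl(2(-1)^{n}c\cdot N_{K/F}\bigr).
\]
Thus the proposition is equivalent to the identity
\[
  \gamma_{\psi_F}(N_{K/F})\cdot \gamma_{\psi_F}\!\bigl(2(-1)^{n}c\cdot N_{K/F}\bigr)=\Delta(\gamma^{-1},\delta)\in\{\pm1\}.
\]

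The hard part is verifying this last identity. When $K=F\times F$ both sides are $+1$ (the condition of Corollary \ref{prop:Delta-formula-gen} being automatic because $N_{K/F}\simeq\Hy$), so the issue is the case where $K/F$ is a quadratic extension. There one uses the standard fact that, for a binary form $q_0=N_{K/F}$ and $\alpha\in F^\times$, the ratio $\gamma_{\psi_F}(\alpha q_0)/\gamma_{\psi_F}(q_0)$ factors through $F^\times/N_{K/F}(K^\times)F^{\times 2}$ and equals $+1$ or $-1$ according to whether $\alpha$ is a norm from $K^\times$ modulo squares (this is the content of the Hilbert-symbol/Hasse-invariant identity for the Weil index of scaled binary forms). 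Applying this with $\alpha=2(-1)^nc$, the left side of the identity becomes $+1$ exactly when $2(-1)^nc\in N_{K/F}(K^\times)F^{\times 2}$. The right side equals $+1$ exactly when $-c/2 \equiv (-1)^nN_{K/F}$ in the Witt group, i.e.\ when $(-1)^{n+1}c/2$ lies in $N_{K/F}(K^\times)F^{\times 2}$. The two conditions differ by the class of $-4\equiv -1$, and the main technical step is to check that this discrepancy cancels against the factor $\gamma_{\psi_F}(N_{K/F})$ appearing on the left (equivalently, that $-1\in N_{K/F}(K^\times)F^{\times 2}$ iff $\gamma_{\psi_F}(N_{K/F})=+1$, which is \cite[(14.3)]{Weil64} combined with the formula for $\varepsilon(1/2,\chi,\psi_F)$). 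Carrying out this bookkeeping rigorously is the principal obstacle; once done, the proposition follows.
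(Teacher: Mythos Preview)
Your overall architecture matches the paper's proof: both use Lemma \ref{prop:rigidify} to identify $(F^{2n},q_\delta)\simeq (V,-\tfrac{1}{2}q)$, feed this into Corollary \ref{prop:Delta-formula-gen}, and then apply the Whittaker normalization from Definition \ref{def:Whittaker-normalization} and Remark \ref{rem:epsilon-Weil}. The divergence is in the final computational step, and there your bookkeeping has genuine errors.

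The paper observes that $(V,-\tfrac{1}{2}q)\stackrel{\text{Witt}}{\sim}(-1)^n N_{K/F}$ is equivalent to $(V,2(-1)^{n-1}q)\simeq (V',q'):=(n-1)\Hy\oplus N_{K/F}$, and since these two $2n$-dimensional forms already share the same discriminant, the question reduces to comparing Hasse invariants. Perrin's formula \cite[Proposition 1.3.4]{Pe81} then gives directly
\[
\Delta(\gamma^{-1},\delta)=\frac{s(q')}{s(2(-1)^{n-1}q)}=\frac{\gamma_{\psi_F}(N_{K/F})}{\gamma_{\psi_F}(2(-1)^{n-1}q)}=\gamma_{\psi_F}(N_{K/F})\,\gamma_{\psi_F}(2(-1)^n q),
\]
and dividing by $\gamma_{\psi_F}(N_{K/F})$ finishes. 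No case analysis is needed.

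Your route instead reduces to the identity $\gamma_{\psi_F}(N_{K/F})\cdot\gamma_{\psi_F}(\alpha N_{K/F})=\Delta(\gamma^{-1},\delta)$ with $\alpha=2(-1)^nc$, and then argues by norm conditions. Two problems: (i) you apply the ``ratio $=\pm 1$ according to norm'' fact to the \emph{product} $\gamma_{\psi_F}(N_{K/F})\cdot\gamma_{\psi_F}(\alpha N_{K/F})$, which differs from the ratio by $\gamma_{\psi_F}(N_{K/F})^2=\chi(-1)$, not by $1$; (ii) your parenthetical ``$-1\in N_{K/F}(K^\times)F^{\times 2}$ iff $\gamma_{\psi_F}(N_{K/F})=+1$'' is false --- the correct statement is $\gamma_{\psi_F}(N_{K/F})^2=\chi(-1)$, so $\gamma_{\psi_F}(N_{K/F})\in\{\pm 1\}$ (as opposed to $\pm i$) iff $-1$ is a norm, which says nothing about the sign. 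If you redo the computation correctly you find the left side equals $\chi(-\alpha)=\chi(2(-1)^{n+1}c)$ and the right side equals $\chi((-1)^{n+1}c/2)$, and these agree because $\chi(4)=1$; so the identity is true, but not for the reasons you gave. The paper's use of the Hasse-invariant/Weil-index relation bypasses all of this cleanly.
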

\begin{proof}
  Set $q_\delta := \frac{1}{2}(\delta + {}^t \delta)$ as in Theorem \ref{prop:wald-evenso}. The assumption on $(\gamma,\delta)$, Lemma \ref{prop:rigidify} together with Proposition \ref{prop:Delta-formula-gen} imply
    $$\Delta(\gamma^{-1},\delta) = \begin{cases}
    1, & \text{if } (V, -\frac{1}{2}q) \stackrel{\mathrm{Witt}}{\sim} (-1)^n N_{K/F}, \\
    -1, & \text{otherwise}.
  \end{cases}$$

  However $(V,-\frac{1}{2}q) \stackrel{\mathrm{Witt}}{\sim} (-1)^n N_{K/F}$ if and only if $(V, 2(-1)^{n-1} q) \stackrel{\mathrm{Witt}}{\sim} N_{K/F}$. Put
  $$ (V',q') := (n-1)\Hy \oplus N_{K/F}. $$
  It remains to decide whether $(V, 2(-1)^{n-1} q) \simeq (V',q')$ or not. According to \eqref{eqn:V-decomposition}, $d_\pm(2(-1)^{n-1} q)=d_\pm(q')$, thus it suffices to compare their Hasse invariants $s(2(-1)^{n-1} q)$ and $s(q')$. To this end we use \cite[Proposition 1.3.4]{Pe81}, which implies
  $$ \Delta(\gamma^{-1},\delta) = \frac{s(q')}{s(2(-1)^{n-1}q)} = \frac{\gamma_{\psi_F}(N_{K/F})}{\gamma_{\psi_F}(2(-1)^{n-1}q)} = \gamma_{\psi_F}(N_{K/F}) \gamma_{\psi_F}(2(-1)^n q) , $$
  where we have used the fact that $\gamma_{\psi_F}(\cdot)$ factors through the Witt group.

  By Definition \ref{def:Whittaker-normalization} and Remark \ref{rem:epsilon-Weil}, we arrive at
  \begin{align*}
    \Delta_\lambda(\gamma^{-1},\delta) & = \varepsilon\left(\frac{1}{2}, \chi, \psi_F\right)^{-1} \Delta(\gamma^{-1},\delta) \\
    & = \gamma_{\psi_F}(N_{K/F})^{-1} \Delta(\gamma^{-1},\delta) = \gamma_{\psi_F}(2(-1)^n q),
  \end{align*}
  as required.
\end{proof}

\subsection{Goldberg-Shahidi pairing}\label{sec:pairing}
We are now ready to introduce the integral pairing of Goldberg and Shahidi. Consider the general case
\begin{itemize}
  \item $G_1$, $P$, $M=\GL_E(H) \times \U(V,q)^\circ$: as in \S\ref{sec:grouptheory};
  \item $w_0$: the nontrivial element in $W^{G_1}(M) := N_{G_1(F)}(M(F))/M(F)$;
  \item $(\pi \extotimes \sigma, V_\pi \otimes V_\sigma)$: irreducible supercuspidal representation of $M = \GL_E(H) \times \U(V,q)^\circ$ satisfying $w_0 \cdot (\pi \extotimes \sigma) \simeq \pi \extotimes \sigma$;
  \item $(B,\lambda)$: a $\theta$-stable Whittaker datum for $\tGL_E(H)$ (recall Definition \ref{def:Whittaker-normalization}, replace $F$ by $E$ if necessary).
\end{itemize}

The condition $w_0 \cdot (\pi \extotimes \sigma) \simeq \pi \extotimes \sigma$ implies $\pi$ is unitary, hence tempered. Therefore $\pi$ can be canonically extended to a representation $\tilde{\pi}$ of $\tGL_E(H)$ by Remark \ref{rem:twisted-pi}.

Let $H_M: M(F) \to \mathfrak{a}_{M,\C}$ be the Harish-Chandra map, $\lambda \in \mathfrak{a}_{M,\C}^*$, and set $(\pi \extotimes \sigma)_\lambda := (\pi \extotimes \sigma) \otimes e^{\angles{\lambda, H_M(\cdot)}}$. Denote the normalized parabolic induction of $(\pi \extotimes \sigma)_\lambda$ by $\mathcal{I}_P^{G_1}((\pi \extotimes \sigma)_\lambda)$. Choose a representative $\hat{w}_0$ of $w_0$ in $G_1(F)$. We want to study the residue at $\lambda=0$ of the standard intertwining operator
\begin{align*}
  J_P(w_0, (\pi \extotimes \sigma)_\lambda): \mathcal{I}_P^{G_1}((\pi \extotimes \sigma)_\lambda) & \longrightarrow \mathcal{I}_P^{G_1}((\pi \extotimes \sigma)_\lambda) \\
  f & \longmapsto \left[ x \mapsto \int_{U(F)} f(\hat{w}_0^{-1} u x) \dd u \right].
\end{align*}
This integral is absolutely convergent if $\text{Re}\lambda$ lies in the positive chamber, for general $\lambda$ it is obtained by meromorphic continuation.

In the study of $\text{Res}_{\lambda=0} J_P(w_0, (\pi \extotimes \sigma)_\lambda)$, Goldberg and Shahidi were led to study an intricate pairing $R: \mathcal{A}(\tilde{\pi}) \times \mathcal{A}(\sigma) \to \C$ between matrix coefficients (recall the definition of twisted matrix coefficients in \S\ref{sec:twisted-rep}) using GS-norms, which has a regular (or elliptic) part $R_\text{ell}$ that is defined as an integral pairing between orbital integrals. As mentioned in \S\ref{sec:intro}, the non-vanishing of $R_\text{ell}$ is conjectured to be related to twisted endoscopic transfer \cite{Sh95,GS98,GS01,GS09}. In view of Spallone's improved formulae \cite[Corollary 6]{Sp11}, $R_\text{ell}$ is proportional to another pairing $R^\text{ell}$, which we set off to define.

Henceforth, we assume $E=F$ in order to apply Spallone's formulae in \cite[\S 9]{Sp11}.

Fix Haar measures on the groups $\GL(H)$, $\U(V,q)^\circ$, $Z_{U(V,q)^\circ}(F)$ and $Z_{\GL(H)}(F) = F^\times$.

\begin{definition}
  Assume $E=F$. Let $f_{\tilde{\pi}} \in \mathcal{A}(\tilde{\pi})$, $f_\sigma \in \mathcal{A}(\sigma)$. Define
  \begin{multline}\label{eqn:Rell}
    R^\text{ell}(f_{\tilde{\pi}},f_\sigma) = \sum_{\substack{T \\ \text{elliptic}}} |W(U(V,q)^\circ, T(F))|^{-1} \int_{T(F)} \\
      \left( |D^{\U(V,q)^\circ}(\gamma)|^{\frac{1}{2}} \int_{Z_{\U(V,q)^\circ}(F) \backslash \U(V,q)} f_\sigma(y^{-1}\gamma y) \dd y \right) \left( |D^{\tGL(H)}(\delta)|^{\frac{1}{2}} \int_{F^\times \backslash \GL(H)} f_{\tilde{\pi}}(x^{-1}\delta x) \dd x \right)  \dd\gamma,
  \end{multline}
  where
  \begin{itemize}
    \item $T$ ranges over conjugacy classes of elliptic maximal tori in $\U(V,q)^\circ$, whose measures are normalized so that $\text{vol}(T(F)/Z_{\U(V,q)^\circ}(F))=1$;
    \item $\delta \in \tGL(H)$ is given by choosing $X \in \Isom_F(V,H^\vee)$ and setting
      $$\delta := \begin{cases}
        \delta_X(-\gamma), & \text{ in the odd orthogonal case,} \\
        \delta_X(\gamma), & \text{ otherwise };
      \end{cases}$$
      here $\delta_X(\cdot)$ is the section of GS-norm defined in Theorem \ref{prop:Norm}.
  \end{itemize}
\end{definition}

This is the main object is this article. To justify the definition, let us show that $R^\text{ell}$ is indeed the elliptic part of the pairing in \cite{Sp11}, up to a harmless constant. Write $\dim_F V=2n$ (resp. $2n+1$) in the even orthogonal or symplectic case (resp. the odd orthogonal case). Choose $f^{\tilde{\pi}} \in C_c^\infty(\tGL(H))$ such that
$$ f_{\tilde{\pi}}(\delta) = \int_{F^\times} \omega_\pi(z)^{-1} f^{\tilde{\pi}}(z\delta) \dd z, \quad \delta \in \tGL(H)_\text{reg}. $$

Spallone defined a pairing $R(f^{\tilde{\pi}}, f_\sigma)$ in \cite[Corollary 6]{Sp11}. Translated into the language of twisted spaces using \S\ref{sec:nonconnected}, the elliptic part of $R(f^{\tilde{\pi}}, f_\sigma)$ is
\begin{multline*}
  R_\text{ell}(f^{\tilde{\pi}}, f_\sigma) := \Res_{s=0} \sum_{k=0}^\infty q_F^{-2nks} \sum_{\substack{T \\ \text{elliptic}}} |W(U(V,q)^\circ, T(F))|^{-1} \int_{T(F)}  |D^{\U(V,q)^\circ}(\gamma)|^{\frac{1}{2}} |D^{\tGL(H)}(\delta)|^{\frac{1}{2}} \\ \iint_{\substack{x \in T(F) \backslash \GL(H) \\ y \in T(F) \backslash \U(V,q)}}
  \sum_{\alpha \in F^\times/F^{\times 2}} \omega_\pi(\alpha)^{-1} f^{\tilde{\pi}}(\alpha x^{-1}\delta x) f_\sigma(y^{-1}\gamma y) w_k(x,y) \dd x \dd y \dd \gamma
\end{multline*}
where $w_k(x,y)$ is Spallone's weight factor. Here $T$ is identified with $\GL(H)_\delta$ via Corollary \ref{prop:GS-ellipticity}. We only need two properties of $w_k(x,y)$:
\begin{itemize}
  \item $0 \leq w_k(x,y) \leq \text{vol}(T(F))$;
  \item $w_k(x,y)$ converges to $\text{vol}(T(F))$ pointwise, for elliptic $T$.
\end{itemize}
For each $T$, the corresponding sums inside $\Res_{s=0}$ are absolutely convergent when $\text{Re}(s)>0$, by \cite[Proposition 14]{Sp08}.

\begin{proposition}
  We have
  $$ R_\mathrm{ell}(f^{\tilde{\pi}}, f_\sigma) = \begin{cases}
    (|2|_F 2n \log q_F)^{-1} R^\mathrm{ell}(f_{\tilde{\pi}},f_\sigma), & \text{ in the even orthogonal or symplectic case}; \\
    2(|2|_F 2n \log q_F)^{-1} R^\mathrm{ell}(f_{\tilde{\pi}},f_\sigma), & \text{ in the odd orthogonal case}.
  \end{cases}$$
\end{proposition}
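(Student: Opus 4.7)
The plan is to carry out three largely independent simplifications: (i) interchange $\Res_{s=0}$ with the sum over elliptic tori and collapse the Dirichlet series in $k$ via an Abelian-type argument, (ii) replace the weight $w_k$ by its pointwise limit $\text{vol}(T(F))$, producing the factor $1/(2n\log q_F)$, and (iii) unfold the defining formula for $f_{\tilde{\pi}}$ in terms of $f^{\tilde{\pi}}$, producing the factor $|2|_F^{-1}$ from the Jacobian of squaring on $F^\times$. The additional factor of $2$ in the odd orthogonal case is absorbed from the discrepancy $\GL(H)^\delta = \GL(H)_\delta \cdot \{\pm 1\}$ recorded in Proposition \ref{prop:vreg-centralizer}.

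For step (i)--(ii): the absolute convergence for $\text{Re}(s) > 0$ cited from \cite[Proposition 14]{Sp08} justifies interchanging the residue with the finite sum over conjugacy classes of elliptic $T$. For each such $T$, only $q_F^{-2nks} w_k(x,y)$ depends on $k$ and $s$. Using $0 \leq w_k \leq \text{vol}(T(F))$ together with $w_k \to \text{vol}(T(F))$ pointwise, I write $w_k = \text{vol}(T(F)) + (w_k - \text{vol}(T(F)))$, so that the second summand contributes a Dirichlet series that converges at $s=0$ by dominated convergence and is therefore holomorphic there. The pole comes entirely from $\sum_{k\ge 0}q_F^{-2nks} = (1 - q_F^{-2ns})^{-1}$, whose residue at $s=0$ equals $(2n \log q_F)^{-1}$. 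At this point the inner double integral factorizes: the $y$-piece simplifies using $Z_{\U(V,q)^\circ}(F) \subset T(F)$ and $\text{vol}(T(F)/Z_{\U(V,q)^\circ}(F)) = 1$, giving $\int_{T(F) \backslash \U(V,q)^\circ} f_\sigma = \int_{Z_{\U(V,q)^\circ}(F) \backslash \U(V,q)^\circ} f_\sigma$ as in $R^\text{ell}$.

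For step (iii): substituting $f_{\tilde{\pi}}(\delta') = \int_{F^\times} \omega_\pi(z)^{-1} f^{\tilde{\pi}}(z \delta')\,d^\times z$ and decomposing $F^\times = \bigsqcup_{\alpha \in F^\times/F^{\times 2}} \alpha F^{\times 2}$, the selfduality of $\pi$ gives $\omega_\pi^2 = 1$, so $\omega_\pi$ is constant on each coset. The squaring map $u \mapsto u^2$ is 2-to-1 onto $F^{\times 2}$ with multiplicative Jacobian $|2|_F$, so
\[
\int_{F^{\times 2}} \phi(v)\,d^\times v = \frac{|2|_F}{2}\int_{F^\times}\phi(u^2)\,d^\times u.
\]
Scalars being central, left action by $u^2$ on $x^{-1}\delta x$ coincides with conjugation by $u$; hence $u^2 \cdot x^{-1}\delta x = (xu)^{-1}\delta(xu)$. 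Setting $g = xu$ and invoking the disintegration of Haar $\int_{F^\times \backslash \GL(H)} dx \int_{F^\times} d^\times u = \int_{\GL(H)} dg$, followed by $\int_{\GL(H)} = \text{vol}(T(F)) \int_{T(F)\backslash \GL(H)}$ (valid since $T \simeq L^1$ is compact and the integrand is $T(F)$-invariant), one matches the $x$-integral in Spallone's formula with $\int_{F^\times \backslash \GL(H)} f_{\tilde{\pi}}(x^{-1}\delta x)\,dx$, up to the constant $|2|_F^{-1}$. In the odd orthogonal case, the parametric data include the extra term $x_D$ and one has $\GL(H)^\delta = \GL(H)_\delta \cdot \{\pm 1\}$, which contributes an extra factor of 2 in the volume comparison, accounting for the factor $2$ in the odd orthogonal formula.

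The main obstacle is in step (iii): sorting out the several centers at play ($F^\times$, $\{\pm 1\}$, $T(F) \simeq L^1$, $Z_{\U(V,q)^\circ}(F)$), keeping track of the conventions on left/right bitorsor actions of scalars versus scalar multiplication of bilinear forms, and ensuring that the Haar measure normalizations inherited from $Z_{\U(V,q)^\circ}(F)$, $Z_{\GL(H)}(F) = F^\times$, and the elliptic tori are mutually compatible through Corollary \ref{prop:GS-ellipticity}. Steps (i)--(ii) are essentially formal given \cite{Sp08,Sp11}.
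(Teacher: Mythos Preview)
Your approach matches the paper's almost exactly: fix an elliptic torus $T$, show the inner integral $a_k^T$ converges to a limit $a^T$ (using the boundedness of normalized orbital integrals, Proposition \ref{prop:orbint-bound}, and dominated convergence), then extract the residue via an Abelian lemma; the $y$-integral simplifies using $\text{vol}(T(F)/Z_{\U(V,q)^\circ}(F))=1$, and the $x$-integral is unfolded via the change of variables $z \mapsto z^2$ on $F^\times$ together with $\Ad_\delta(z)=z^{-1}$ and $\omega_\pi^2=1$.

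One genuine gap: your justification of the Abelian step is not right. Dominated convergence gives you $a_k^T \to a^T$, but it does \emph{not} give convergence of $\sum_{k\ge 0}(a_k^T - a^T)$ at $s=0$; pointwise convergence of $w_k$ says nothing about summability of the differences. What you need is the classical Abel/Tauber-type statement: if $a_k \to a$, then $\Res_{s=0}\sum_{k\ge 0} q_F^{-2nks} a_k = (2n\log q_F)^{-1} a$. The paper simply invokes \cite[Proposition 6]{SS10} for this; you should too, since your attempted decomposition $w_k = \text{vol}(T(F)) + (w_k - \text{vol}(T(F)))$ does not by itself yield a holomorphic remainder.

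A minor remark on the factor $2$ in the odd orthogonal case: you attribute it to the disconnectedness $\GL(H)^\delta = \GL(H)_\delta \cdot \{\pm 1\}$ on the twisted side (Proposition \ref{prop:vreg-centralizer}), whereas the paper phrases it on the untwisted side as $t := [\{\pm 1\} : Z_{\U(V,q)^\circ}(F)]$, using that $T(F)\cap F^\times = Z_{\U(V,q)^\circ}(F)$ under the identification of Corollary \ref{prop:GS-ellipticity}. These are equivalent: in the odd case $Z_{\SO(2n+1)}(F)=\{1\}$ precisely because $-1 \notin \GL(2n+1)_\delta$. Also note a small sign slip: $(xu)^{-1}\delta(xu) = u^{-2}\,x^{-1}\delta x$, not $u^2\,x^{-1}\delta x$, though this is immaterial after the substitution.
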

\begin{proof}
  The argument is almost identical to that in \cite[\S 3]{SS10}. Fix an elliptic torus $T$ in $\U(V,q)^\circ$ and write its contribution in $R_\text{ell}$ as $\Res_{s=0} \sum_{k=0}^\infty q_F^{-2nks} a_k^T $. Then $a_k^T$ is bounded by a linear combination of terms
  $$ \int_{T(F)} |D^{\U(V,q)^\circ}(\gamma)|^{\frac{1}{2}} |D^{\tGL(H)}(\delta)|^{\frac{1}{2}}  O^{\tGL(H)}_\delta(|f^{\tilde{\pi}}(\alpha\cdot\;)|) O^{\U(V,q)}_\gamma(|f_\sigma|) \dd\gamma $$
  with $\alpha \in F^\times/F^{\times 2}$, whose absolute convergence follows from Proposition \ref{prop:orbint-bound} .

  From the dominated convergence theorem, one sees $\lim_{k \to \infty} a_k^T = a^T$ where $a^T$ is the expression
  \begin{multline*}
    |W(U(V,q)^\circ, T(F))|^{-1} \int_{T(F)}  |D^{\U(V,q)^\circ}(\gamma)|^{\frac{1}{2}} |D^{\tGL(H)}(\delta)|^{\frac{1}{2}} \\ \iint_{\substack{x \in T(F) \backslash \GL(H) \\ y \in T(F) \backslash \U(V,q)}}
  \sum_{\alpha \in F^\times/F^{\times 2}} \omega_\pi(\alpha)^{-1} f^{\tilde{\pi}}(\alpha x^{-1}\delta x) f_\sigma(y^{-1}\gamma y) \dd x \dd y \dd \gamma .
  \end{multline*}
  Recall that $\text{vol}(Z_{\U(V,q)^\circ}(F) \backslash T(F))=1$, hence the integral over $y$ yields
  $$ \int_{Z_{\U(V,q)^\circ}(F) \backslash U(V,q)} f_\sigma(y^{-1} \delta y) \dd y.$$
  Note that $T(F) \cap F^\times = Z_{\U(V,q)^\circ}(F) \subset \{\pm 1\}$ (as subgroups of $\GL(H)$) in each case. Put $t := [\{\pm 1\} : Z_{\U(V,q)^\circ}(F)]$ which equals $2$ in the odd orthogonal case, otherwise it equals $1$. The integral over $x$ yields
  \begin{multline*}
    \int_{T(F) \backslash \GL(H)} \sum_{\alpha \in F^\times/F^{\times 2}} \omega_\pi(\alpha)^{-1} f^{\tilde{\pi}}(\alpha x^{-1} \delta x) \dd x \\
    = t \cdot \int_{T(F) F^\times \backslash \GL(H)} \; \int_{\{\pm 1\} \backslash F^\times} \sum_{\alpha \in F^\times/F^{\times 2}} \omega_\pi(\alpha)^{-1} f^{\tilde{\pi}}(\alpha z^{-2} x^{-1} \delta x)  \dd z\dd x \\
    = t|2|_F^{-1} \int_{T(F) F^\times \backslash \GL(H)} \int_{F^\times} \omega_\pi(z)^{-1} f^{\tilde{\pi}}(z x^{-1}\delta x) \dd z \dd x \\
    = t |2|_F^{-1} \int_{T(F) F^\times \backslash \GL(H)} f_{\tilde{\pi}}(x^{-1}\delta x)\dd x = t |2|_F^{-1} \int_{F^\times \backslash \GL(H)} f_{\tilde{\pi}}(x^{-1}\delta x)\dd x
  \end{multline*}
  using the fact that $\Ad_\delta(z)=z^{-1}$ and $\omega_\pi^2=1$.

  Summing over $T$ gives $R_\mathrm{ell}(f^{\tilde{\pi}}, f_\sigma) = \Res_{s=0} \sum_{k=0}^\infty q_F^{-2nks} a_k$ where
  $$ \lim_{k \to \infty} a_k = \sum_T a^T = t|2|_F^{-1} R^\text{ell}(f_{\tilde{\pi}},f_\sigma). $$
  To relate the residue at $s=0$ and $\lim_{k \to \infty} a_k$, it remains to apply \cite[Proposition 6]{SS10}.
\end{proof}

Now return to the study of $R^\text{ell}$.
\begin{lemma}\label{prop:Rell-character}
  We have
  \begin{align*}
    R^\mathrm{ell}(f_{\tilde{\pi}}, f_\sigma) &= c \cdot \frac{f_{\tilde{\pi}}^\circ(1) f_\sigma(1)}{d(\pi)d(\sigma)} \sum_T |W(U(V,q)^\circ, T(F))|^{-1} \int_{T(F)} I^{\U(V,q)^\circ}(\sigma,\gamma) I^{\tGL(H)}(\tilde{\pi}, \delta) \dd\gamma \\
    & = c \cdot \frac{f_{\tilde{\pi}}^\circ(1) f_\sigma(1)}{d(\pi)d(\sigma)} \int_{\Gamma_\mathrm{reg,ell}(\U(V,q)^\circ)} I^{\U(V,q)^\circ}(\sigma,\gamma) I^{\tGL(H)}(\tilde{\pi}, \delta) \dd\gamma,
  \end{align*}
  where $c=1$ in the symplectic case, and $c=2$ in the even or odd orthogonal cases.
\end{lemma}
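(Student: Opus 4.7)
The strategy is to apply the formal-degree formula of Proposition \ref{prop:orbint-character} in both its twisted and untwisted incarnations to the two inner integrals in the definition \eqref{eqn:Rell} of $R^{\mathrm{ell}}$, thereby converting each orbital integral of a matrix coefficient into a (normalized) character value. The second equality in the lemma is immediate from the definition \eqref{eqn:measure-1} of the Radon measure on $\Gamma_{\mathrm{reg,ell}}(\U(V,q)^\circ)$; the real content lies in the first equality. To begin, fix an elliptic maximal torus $T \subset \U(V,q)^\circ$ and a very regular $\gamma \in T(F)$. By Corollary \ref{prop:GS-ellipticity}, the associated $\delta \in \tGL(H)$ is also elliptic strongly regular semisimple, so Proposition \ref{prop:orbint-character}, applied to the supercuspidal irreducible $\pi$ and the elliptic $\delta$, yields
\begin{equation*}
  |D^{\tGL(H)}(\delta)|^{1/2} \int_{F^\times \backslash \GL(H)} f_{\tilde{\pi}}(x^{-1}\delta x) \dd x = \frac{f_{\tilde{\pi}}^\circ(1)}{d(\pi)} I^{\tGL(H)}(\tilde{\pi},\delta).
\end{equation*}

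For the $\sigma$-factor, note that the integral in \eqref{eqn:Rell} runs over $Z_{\U(V,q)^\circ}(F) \backslash \U(V,q)$ rather than over the identity component, so one expects an overall factor $e := [\U(V,q) : \U(V,q)^\circ] \in \{1,2\}$. Choose a coset decomposition $\U(V,q) = \bigsqcup_{i=1}^{e} g_i \U(V,q)^\circ$ with $g_1 = 1$; then
\begin{equation*}
  \int_{Z_{\U(V,q)^\circ} \backslash \U(V,q)} f_\sigma(y^{-1}\gamma y) \dd y = \sum_{i=1}^{e} \int_{Z_{\U(V,q)^\circ} \backslash \U(V,q)^\circ} f_\sigma(y^{-1} (g_i^{-1}\gamma g_i) y) \dd y,
\end{equation*}
and each summand equals $\frac{f_\sigma(1)}{d(\sigma)} \Theta^{\U(V,q)^\circ}_\sigma(g_i^{-1}\gamma g_i)$ by the untwisted case of Proposition \ref{prop:orbint-character} applied to the elliptic element $g_i^{-1}\gamma g_i$. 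Multiplying by $|D^{\U(V,q)^\circ}(\gamma)|^{1/2}$ promotes each character to its normalized form.

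It then remains to verify that $\sum_{i=1}^{e}\Theta_\sigma(g_i^{-1}\gamma g_i) = c\,\Theta_\sigma(\gamma)$ with $c = e$. This is vacuous in the symplectic case ($e=1$); in the odd orthogonal case $\Or(V,q)/\SO(V,q)$ is represented by $\{I,-I\}$ with $-I$ central, so both summands coincide with $\Theta_\sigma(\gamma)$. In the even orthogonal case, a coset representative $s \in \Or(V,q) \setminus \SO(V,q)$ induces the outer automorphism of $\SO(V,q)$, and the identity reduces to $\sigma^s \simeq \sigma$; this invariance is extracted from the hypothesis $w_0 \cdot (\pi \extotimes \sigma) \simeq \pi \extotimes \sigma$, since in the ambient group $G_1=\SO(V_1,q_1)$ the Weyl representative $w_0$ restricts to the outer automorphism of the $\SO(V,q)$-factor of $M$. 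Combining the two displayed identities, substituting back into \eqref{eqn:Rell}, and summing over $T$ and $\gamma$ then produces the first equality of the lemma. The main delicacy is precisely this last point, the identification of the $w_0$-action on the $\SO$-factor in the even orthogonal case; the remaining steps amount to direct applications of Proposition \ref{prop:orbint-character} and the measure conventions fixed earlier.
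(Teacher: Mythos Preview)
Your overall strategy---applying Proposition \ref{prop:orbint-character} to each inner integral and splitting the $\U(V,q)$-integral into $\U(V,q)^\circ$-cosets---matches the paper's approach and is sound through step 3. The gap is precisely where you flag the ``main delicacy'': the even orthogonal case of step 4.

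Your claim that $w_0$ restricts to the outer automorphism on the $\SO(V,q)$-factor is incorrect. Since $\dim_F H = 2n$ is even, one can represent $w_0$ in $\SO(V_1,q_1)$ by the isometry that swaps a basis of $H$ with the dual basis of $H^\vee$ (a product of $2n$ transpositions, determinant $(-1)^{2n}=1$) and fixes $V$ pointwise. Hence $w_0$ acts on $M = \GL(H) \times \SO(V,q)$ as $(g,h) \mapsto (\theta(g),h)$ up to inner automorphism, and the hypothesis $w_0 \cdot (\pi \extotimes \sigma) \simeq \pi \extotimes \sigma$ asserts only $\pi \simeq \pi^\vee$; it gives no $\Out$-invariance of $\sigma$. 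So the pointwise identity $\Theta_\sigma(s^{-1}\gamma s) = \Theta_\sigma(\gamma)$ is unjustified, and indeed fails for general supercuspidal $\sigma$.

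The paper avoids this by observing that the \emph{other} factor is also invariant under $\gamma \mapsto u\gamma u^{-1}$ for $u \in \U(V,q)$: from the section \eqref{eqn:section} and $u^{-1}\sigma[q]^{-1} = \sigma[q]^{-1}\check{u}$ one computes that $\delta_X(u\gamma u^{-1})$ is $\GL(H,F)$-conjugate to $\delta_X(\gamma)$, so the twisted orbital integral (and hence $I^{\tGL(H)}(\tilde{\pi},\delta)$) is unchanged. With the full integrand invariant, one makes the change of variables $\gamma \mapsto s^{-1}\gamma s$ in the extra summand; this simply permutes the $\U(V,q)^\circ$-conjugacy classes of elliptic tori and their Weyl groups, yielding the factor $c=2$ with no hypothesis on $\sigma$. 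Your argument is easily repaired by inserting this observation in place of the appeal to $w_0$.
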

\begin{proof}
  To begin with, we replace the integral over $Z_{\U(V,q)^\circ}(F) \backslash \U(V,q)$ in $R^\text{ell}$ by an integral over $Z_{\U(V,q)^\circ}(F) \backslash \U(V,q)^\circ$ at the cost of introducing $c$. This is feasible as conjugating $\gamma$ by an element of $\U(V,q)$ amounts to conjugating $\delta$ by $\GL(H,F)$, by \eqref{eqn:section}. 

  In view of the ellipticity of $\gamma$ and $\delta$ (Corollary \ref{prop:GS-ellipticity}), the first equality follows by applying Proposition \ref{prop:orbint-character} to $\U(V,q)^\circ$ and $\tGL(H)$, while the second follows from our definition of the measure on $\Gamma_\mathrm{reg,ell}(\U(V,q)^\circ)$ in \eqref{eqn:measure-1}.
\end{proof}

In the next subsection, we will investigate the even orthogonal case $\dim_F V = 2n$, under the hypothesis that $\pi$ does not come from $\SO(2n+1)$ by endoscopic transfer.

\subsection{The pairing for even orthogonal groups}
The Goldberg-Shahidi-Spallone formalism is now specialized to the even orthogonal case. Assume that $\dim_F V_1 = 6n$, $\dim_F V = \dim_F H = 2n$. In this case $\pi \simeq \pi^\vee$. Set $G := \SO(V,q)$, thus $\sigma \in \Pi_2(G)$. 

Since $\pi \in \Pi_\text{sc,temp}(\GL(H)) \subset \Pi_{2,\text{temp}}(\GL(H))$, we have $\phi \leftrightarrow \pi$ for some selfdual $L$-parameter $\phi \in \Phi_\text{ell,bdd}(\tGL(H))$ which is irreducible as a representation of $\WD_F$. By \eqref{eqn:Phi_2-disjoint}, either
\begin{enumerate}
  \item $\sgn(\phi)=-1$, hence $\pi$ comes from $\SO(2n+1)$, or
  \item $\sgn(\phi)=1$, hence $\pi$ comes from a unique elliptic endoscopic group $G'=\SO(V',q')$, where $(V',q')$ is an $F$-quadratic space with $\dim_F V' = 2n$.
\end{enumerate}

\begin{hypothesis}\label{hyp:even-SO}
  Assume $\pi$ does not come from $\SO(2n+1)$.
\end{hypothesis}

In Theorem \ref{prop:LLC-SO}, we have constructed a subset $\Pi_\phi \subset \overline{\Pi_2}(G') $ and a distribution $\Theta^{G'}_\phi$ on $G'$. Let $s$ be the nontrivial element in $\Out_{2n}(G')$. In view of \eqref{eqn:average}, it satisfies
$$ \Theta^{G'}_\phi(f^{G'}) = \sum_{\bar{\nu} \in \Pi_\phi} \Theta^{G'}_{\nu}\left(\frac{f^{G'}+s(f^{G'})}{2} \right) = \frac{1}{2} \sum_{\bar{\nu} \in \Pi_\phi} (\Theta^{G'}_\nu + \Theta^{G'}_{s\nu})(f^{G'}), \quad f^{G'} \in \mathcal{I}(G'),  $$
where $\nu$ is any inverse image of $\bar{\nu}$ in $\Pi_2(G')$. In particular, $\Theta^{G'}_\phi$ is a virtual character of $G'$ and one can define the multiplicity
\begin{gather}\label{eqn:multiplicity}
\text{mult}(\sigma:\phi) := \begin{cases}
  2 \cdot (\text{the coefficient of } \Theta^G_\sigma \text{ in } \Theta^G_\phi), & \text{ if } G=G', \\
  0, & \text{ if } G \neq G',
\end{cases}
\end{gather}
which belongs to $\{0,1,2\}$.

\begin{theorem}\label{prop:main}
  Under the Hypothesis \ref{hyp:even-SO}, we have
  $$ R^\mathrm{ell}(f_{\tilde{\pi}}, f_\sigma) = \frac{f_{\tilde{\pi}}^\circ(1) f_\sigma(1)}{d(\pi)d(\sigma)} \cdot \gamma_{\psi_F}(2(-1)^n q) \cdot \mathrm{mult}(\sigma:\phi) $$
  for all $f_{\tilde{\pi}} \in \mathcal{A}(\tilde{\pi})$, $f_\sigma \in \mathcal{A}(\sigma)$. In particular, $R^\mathrm{ell}(f_{\tilde{\pi}}, f_\sigma)$ is not identically zero if and only if $\pi$ comes from $G$ and the $\Out_{2n}(G)$-orbit of $\sigma$ is contained $\Pi_\phi$.
\end{theorem}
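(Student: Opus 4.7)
The strategy is to reduce $R^{\text{ell}}$ to an elliptic inner product on $G=\SO(V,q)$ and then invoke Clozel's orthogonality. First I would apply Lemma \ref{prop:Rell-character} (with $c=2$ in the even orthogonal case) to rewrite
$$R^{\text{ell}}(f_{\tilde{\pi}}, f_\sigma) = 2\cdot\frac{f_{\tilde{\pi}}^\circ(1) f_\sigma(1)}{d(\pi)d(\sigma)} \int_{\Gamma_{\text{reg,ell}}(G)} I^G(\sigma,\gamma)\, I^{\tGL(H)}(\tilde{\pi},\delta)\, \dd\gamma,$$
where $\delta := \delta_X(\gamma)$ satisfies $\delta \GSnorm \gamma$.

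The next step is to expand the twisted character via Corollary \ref{prop:char-identity}. Let $\phi$ be the $L$-parameter of $\pi$; Hypothesis \ref{hyp:even-SO} forces $\mathrm{sgn}(\phi)=+1$, so the unique $G'$ with $\phi \in \overline{\Phi_2}(G')$ is a quasisplit even orthogonal group $\SO(V',q')$ with $\dim V' = 2n$. Corollary \ref{prop:GS-KS} shows that $\delta \GSnorm \gamma$ implies $\delta \leftrightarrow \gamma^{-1}$ via twisted endoscopy for $\tGL(H)$ applied to $G$. I would then apply the separation Lemma \ref{prop:separation} to $(G,G')$: if $G \not\simeq G'$, then $\delta$ corresponds to nothing in $G'$, the sum in Corollary \ref{prop:char-identity} is empty, and both $R^{\text{ell}}$ and $\mathrm{mult}(\sigma:\phi)$ vanish by definition \eqref{eqn:multiplicity}.

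Assuming henceforth $G=G'$, the only stable class in $\overline{\Delta}_{\text{reg,ell}}(G)$ contributing to $I^{\tGL(H)}(\tilde{\pi},\delta)$ is $[\gamma^{-1}]=[\gamma]$, and Proposition \ref{prop:Delta-formula-GS} supplies the decisive constancy $\Delta_\lambda([\gamma^{-1}],\delta) = \gamma_{\psi_F}(2(-1)^n q)$, which therefore pulls out of the integral. Expanding $\Theta^G_\phi$ via \eqref{eqn:average} as $\tfrac{1}{2}\sum_{\bar{\nu}\in\Pi_\phi}(\Theta^G_\nu+\Theta^G_{s\nu})$ with $s$ the nontrivial element of $\Out_{2n}(G) \simeq \Z/2\Z$, one is left with
$$\tfrac{1}{2}\sum_{\bar{\nu}\in\Pi_\phi}\int_{\Gamma_{\text{reg,ell}}(G)} I^G(\sigma,\gamma)\bigl(I^G(\nu,\gamma)+I^G(s\nu,\gamma)\bigr)\dd\gamma.$$

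To finish, I invoke Clozel's elliptic orthogonality \cite[Theorem 3]{Cl91} for square-integrable characters, namely $\int I^G(\sigma,\gamma)I^G(\nu,\gamma)\dd\gamma = \delta_{\sigma \simeq \nu^\vee}$; combined with the fact that $\Pi_\phi$ is selfdual-stable as a subset of $\overline{\Pi_2}(G)$, a short combinatorial check against \eqref{eqn:multiplicity} shows this sum equals exactly $\mathrm{mult}(\sigma:\phi)/2$, irrespective of whether $\sigma$ is $\Out_{2n}(G)$-fixed. Cancelling against the prefactor $2$ from Lemma \ref{prop:Rell-character} yields the announced formula, and the nonvanishing dichotomy follows at once since $\gamma_{\psi_F}(2(-1)^n q)$ is a nonzero root of unity while $f_{\tilde{\pi}}^\circ(1)$, $f_\sigma(1)$ may be chosen nonzero. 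The hard part sits upstream of this argument: it is the constancy of $\Delta_\lambda$ on $\GSnorm$-related pairs -- Proposition \ref{prop:Delta-formula-GS} -- that allows the transfer factor to exit the integral, and this in turn rests on Waldspurger's formula together with the compatibility results of \S\ref{sec:GSS-formalism} linking GS-norms to twisted endoscopic correspondence.
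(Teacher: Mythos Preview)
Your proposal is correct and follows essentially the same route as the paper: Lemma \ref{prop:Rell-character}, then Corollary \ref{prop:char-identity}, Corollary \ref{prop:GS-KS}, Lemma \ref{prop:separation}, Proposition \ref{prop:Delta-formula-GS}, and finally Clozel's elliptic orthogonality. One cosmetic difference: after pulling out the transfer factor, the paper rewrites $S^G(\phi,\gamma^{-1})=\overline{S^G(\phi,\gamma)}$ via unitarity and applies orthogonality in the form $\int I^G(\sigma,\gamma)\overline{I^G(\nu,\gamma)}\dd\gamma=\delta_{\sigma,\nu}$, which lands directly on $\mathrm{mult}(\sigma:\phi)/2$ without invoking selfduality of $\Pi_\phi$; your detour through $\nu^\vee$ and the claim that $\Pi_\phi$ is selfdual-stable is unnecessary (and that claim, while true since $\phi=\phi^\vee$, is not established in the paper).
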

Bonus: it follows immediately that $R^\text{ell}$ does not depend on the choice of $X \in \Isom_F(V,H^\vee)$.
\begin{proof}
  In view of Lemma \ref{prop:Rell-character}, it suffices to show that
  $$ \int_{\Gamma_\mathrm{reg,ell}(G)} I^G(\sigma,\gamma) I^{\tGL(H)}(\tilde{\pi}, \delta) \dd\gamma = \gamma_{\psi_F}(2(-1)^n q) \cdot 2^{-1}\mathrm{mult}(\sigma:\phi). $$

  Upon restriction to an open dense subset of $\Gamma_\text{reg,ell}(G)$, one may assume that $\gamma$ and $\delta = \delta_X(\gamma)$ are both very regular semisimple. By Theorem \ref{prop:char-identity}, we have
  $$ I^{\tGL(H)}(\tilde{\pi},\delta) = \sum_{\gamma' \in \overline{\Delta}_{\mathrm{reg,ell}}(G')} S^{G'}(\phi,\gamma') \Delta_\lambda(\gamma',\delta). $$

  Since $\delta \GSnorm \gamma$, Corollary \ref{prop:GS-KS} affirms that $\delta$ corresponds to $\gamma^{-1} \in G_\text{reg}(F)$ by twisted endoscopy. If $G \neq G'$, no $\gamma' \in \overline{\Delta}_{\mathrm{reg,ell}}(G')$ can correspond to $\delta$ according to Lemma \ref{prop:separation}, hence $R^\mathrm{ell}(f_{\tilde{\pi}}, f_\sigma)=0=\text{mult}(\sigma:\phi)$.

  Thus we may suppose $G=G'$. Again, Lemma \ref{prop:separation} affirms that $\gamma^{-1}$ represents the unique class in $\overline{\Delta}_{\mathrm{reg,ell}}(G)$ corresponding to $\delta$, hence
  \begin{align*}
    \int_{\Gamma_\mathrm{reg,ell}(G)} I^G(\sigma,\gamma) I^{\tGL(H)}(\tilde{\pi}, \delta) \dd\gamma & = \int_{\Gamma_\mathrm{reg,ell}(G)} I^G(\sigma,\gamma) S^G(\phi, \gamma^{-1}) \Delta_\lambda(\gamma^{-1},\delta) \dd\gamma \\
    & = \gamma_{\psi_F}(2(-1)^n q) \int_{\Gamma_\mathrm{reg,ell}(G)} I^G(\sigma,\gamma) S^G(\phi, \gamma^{-1}) \dd\gamma \\
    & = \gamma_{\psi_F}(2(-1)^n q) \int_{\Gamma_\mathrm{reg,ell}(G)} I^G(\sigma,\gamma) \overline{S^G(\phi, \gamma)} \dd\gamma ,
  \end{align*}
  by Corollary \ref{prop:Delta-formula-GS}. To get the term $2^{-1}\text{mult}(\sigma:\phi)$, it remains to apply Schur's orthogonality relations for $\Pi_2(G)$: see \cite[Theorem 3]{Cl91}. 
\end{proof}
This justifies \cite[Definition 5.1]{GS98}, as promised.
\begin{remark}\label{rem:nonqs}
  In \cite{Sp11}, Spallone also considered the broader setting in which $\SO(V,q)$ is not necessarily quasisplit. It seems that our arguments can be adapted accordingly, once Arthur's endoscopic classification in the non-quasisplit case is completed: see \cite[Chapter 9]{ArEndo} for an announcement of his results.
\end{remark}

\bibliographystyle{abbrv}
\bibliography{Integration}

\begin{flushleft}
  Wen-Wei Li \\
  Morningside Center of Mathematics, \\
  Academy of Mathematics and Systems Science, Chinese Academy of Sciences, \\
  55, Zhongguancun East Road, \\
  100190 Beijing, People's Republic of China. \\
  E-mail address: \texttt{wwli@math.ac.cn}
\end{flushleft}

\end{document}